\newcommand{\bG}{{\mathbb G}}
\newcommand{\bK}{{\mathbb K}}
\newcommand{\bF}{{\mathbb F}}
\newcommand{\bZ}{{\mathbb Z}}
\newcommand{\bN}{{\mathbb N}}
\newcommand{\fc}{{\mathfrak c}}
\newcommand{\fg}{{\mathfrak g}}
\newcommand{\fl}{{\mathfrak l}}
\newcommand{\fh}{{\mathfrak h}}
\newcommand{\fm}{{\mathfrak m}}
\newcommand{\fn}{{\mathfrak n}}
\newcommand{\fp}{{\mathfrak p}}
\newcommand{\fu}{{\mathfrak u}}
\newcommand{\fb}{{\mathfrak b}}
\newcommand{\GL}{{{\mbox{\rm GL}}}}
\newcommand{\ann}{{{\mbox{\rm ann}}}}
\newcommand{\Lie}{{{\mbox{\rm Lie}}}} 
\newcommand{\Maxspec}{{{\mbox{\rm Maxspec}}}}
\newcommand{\Maxi}{{{\mbox{\rm Max}}}}
\newcommand{\modu}{{{\mbox{\rm mod}}}}
\newcommand{\MaxIrr}{{{\mbox{\rm MaxIrr}}}}
\newcommand{\Spec}{{{\mbox{\rm Spec}}}} 
\newcommand{\Hom}{{{\mbox{\rm Hom}}}}
\newcommand{\Irr}{{{\mbox{\rm Irr}}}}
\newcommand{\Id}{{{\mbox{\rm Id}}}}
\newcommand{\PIDeg}{{{\mbox{\rm PI-degree}}}}
\newcommand{\End}{{{\mbox{\rm End}}}}
\newcommand{\Di}{{{\mbox{\rm Dist}}}}
\newcommand{\ve}{\ensuremath{\mathbf{e}}\xspace}
\newcommand{\vh}{\ensuremath{\mathbf{h}}\xspace}
\newcommand{\Upr}{U^{[r]}(G)}
\newcommand{\Dipri}{\Di^{+}_{p^r}(G)}
\newtheorem{theorem}{Theorem}[section]
\newtheorem{prop}[theorem]{Proposition}
\newtheorem{lemma}[theorem]{Lemma}
\newtheorem*{dfn}{Definition}
\newtheorem{cor}[theorem]{Corollary}
\newtheorem{rmk}{Remark}
\begin{document}
\title[Higher Deformations]{Higher Deformations of Lie Algebra Representations II}
\author{Matthew Westaway}
\email{M.P.Westaway@bham.ac.uk}
\address{School of Mathematics, University of Birmingham, Birmingham, B15 2TT, UK}
\date{June 2, 2020}
\subjclass[2010]{Primary  17B10; Secondary 17B35, 17B50, 20G05}
\keywords{Frobenius kernel, higher universal enveloping algebra, representation, Steinberg decomposition.}
  
\begin{abstract}
	Steinberg's tensor product theorem shows that for semisimple algebraic groups the study of irreducible representations of higher Frobenius kernels reduces to the study of irreducible representations of the first Frobenius kernel. In the preceding paper in this series, deforming the distribution algebra of a higher Frobenius kernel yielded a family of deformations called higher reduced enveloping algebras. In this paper we prove that Steinberg decomposition can be similarly deformed, allowing us to reduce representation theoretic questions about these algebras to questions about reduced enveloping algebras. We use this to derive structural results about modules over these algebras. Separately, we also show that many of the results in the preceding paper hold without an assumption of reductivity.
\end{abstract}
\maketitle

\section{Introduction}

Let $G$ be a semisimple algebraic group over an algebraically closed field $\bK$ of characteristic $p>0$. We denote by $G_{r}$ the $r$-th Frobenius kernel of $G$. It was shown by Steinberg in 1963 \cite{Stein} that in order to understand the irreducible $G_{r}$-modules for $r\geq 1$, it is sufficient to understand the irreducible $G_{1}$-modules. This result can be interpreted in the following way: considering irreducible modules only up to isomorphism, there is a bijection
$$\Psi_0:\Irr(\Di(G_{r+1}))\to \Irr(\Di(G_{r}))\times\Irr(\Di(G_{1})),$$
recalling here that the category of $G_{r}$-modules is equivalent to the category of $\Di(G_{r})$-modules, where $\Di(G_{r})$ is the distribution algebra of $G_{r}$. In particular, this bijection sends the irreducible $\Di(G_{r+1})$-module $L_{r+1}(\lambda + \mu p^r)$, where $\lambda\in X_r$ and $\mu\in X_1$, to the pair $(L_r(\lambda),L_1(\mu))$. Here, $X_r$ is the set of dominant weights $\lambda$ of some maximal torus $T$ of $G$ which satisfy that $0\leq\langle\lambda,\alpha^\nu \rangle< p^r$ for all simple coroots $\alpha^\nu$ of $G$ with respect to $T$.

In the previous paper in this series \cite{West} we constructed, for each $r\in\bN$, a higher universal enveloping algebra $U^{[r]}(G)$ and, for each $\chi\in\Lie(G)^{*}=\fg^{*}$, a reduced higher universal enveloping algebra $U_\chi^{[r]}(G)$, with the key property that $U^{[r]}_0(G)\cong\Di(G_{r+1})$. Every irreducible $U^{[r]}(G)$-module is a $U_\chi^{[r]}(G)$-module for some $\chi$, and in \cite{West} it was shown that, under certain restrictions, there is a well-defined map
$$\Psi_{\chi}:\Irr(U^{[r]}_\chi(G))\to \Irr(\Di(G_{r}))\times\Irr(U_\chi(\fg))$$
which, when $\chi=0$, gives Steinberg decomposition. 

In this paper we remove the restrictions and furthermore show that this map is always a bijection (Theorem~\ref{equiv}, Corollary~\ref{chibij}). This then allows us to derive various structural results about the irreducible $U^{[r]}_\chi(G)$-modules. In particular, given an irreducible $\Di(G_{r})$-module $P$ one can construct teenage Verma modules $Z_\chi^r(P,\lambda)$ which behave as the baby Verma modules $Z_\chi(\lambda)$ do (Proposition~\ref{tVm}). This allows us to classify all irreducible $U^{[r]}_\chi(G)$-modules when $\chi$ is regular in Theorem~\ref{regular}.

The main techniques which allow us to prove these results come from the work of Schneider and Witherspoon on Clifford theory for Hopf algebras. In fact, the Hopf algebraic approach also allows us to reprove many of the results from \cite{West} for affine algebraic groups which are not necessarily reductive. In particular, we show that $U^{[r]}(G)$ is a crossed product of $\Di(G_{r})$ with $U(\fg)^{(r)}$ in Proposition~\ref{cleft}, and that $U^{[r]}(G)$ has a Poincar\'{e}-Birkhoff-Witt basis in Corollary~\ref{basis}. This is the content of Section~\ref{s1}.

It is in Section~\ref{s2} where we study the representation theory of the higher universal enveloping algebras. Specifically, in Subsection~\ref{s2.1} we prove the main result - that the map $\Psi_\chi$ mentioned above is well-defined and a bijection. Then, in Subsection~\ref{s2.2} we construct the teenage Verma modules $Z_\chi^r(P,\lambda)$ and prove some preliminary results about them. Finally, in Subsection~\ref{s2.3}, we see some consequences of the results proved in the previous two subsections.

We conclude in Section~\ref{s3} with a discussion of the Azumaya locus of the algebras $U^{[r]}(G)$. In particular, we start by discussing the Azumaya locus of a not-necessarily-prime algebra $R$ with affine centre $Z$, over which $R$ is module-finite. The reader should note that the prime case has previously been studied by Brown and Goodearl in \cite{BGo}. We see that, under certain conditions, the Azumaya locus coincides with the \emph{pseudo-Azumaya locus}, which is defined in Subsection~\ref{s3.1} and uses the representation theory of $R$. In Subsection~\ref{s3.2} we see how the pseudo-Azumaya locus of the algebra $U^{[r]}(G)$ connects to the Azumaya locus of the corresponding $U(\fg)$.

This work was completed while I was a postgraduate student at the University of Warwick. I would like to thank my PhD supervisors Dmitriy Rumynin and Inna Capdeboscq for their continued assistance with this project. I would also like to thank Lewis Topley for some useful discussions regarding this subject. Finally, I want to thank Alexander Premet, Adam Thomas, and the referee for their useful comments which have helped improve the paper. I was supported during this research by a PhD studentship from the Engineering and Physical Sciences Research Council.
\section{Notation}

Let $A$ be an associative $\bK$-algebra, where $\bK$ is an algebraically closed field of characteristic $p>0$. From now on, we shall write $\Irr(A)$ for the category of irreducible left $A$-modules. In all instances in this paper, elements of the set $\Irr(A)$ shall be finite-dimensional. Given a vector space $V$ we shall write $V^{(r)}$ for the vector space with the same underlying abelian group as $V$ but whose scalar multiplication is given by the map $\bK\times V\to \bK\times V\to V$ which is a composition of the map $(\lambda,v)\mapsto (\lambda^{p^{-r}},v)$ with the scalar multiplication map on $V$. In particular, we denote by $A^{(r)}$ the algebra with underlying ring $A$ but underlying vector space $A^{(r)}$.

When $G$ is a reductive algebraic group over an algebraically closed field $\bK$ of characteristic $p>0$, we assign a maximal torus $T$ and Borel subgroup $B$ such that $T\subset B\subset G$. We also let $\Phi$ denote the root system of $G$ with respect to $T$, let $\Pi$ be a choice of simple roots, and let $\Phi^{+}$ be the corresponding set of positive roots. We further define $\fg=\Lie(G)$, $\fb=\Lie(B)$ and $\fh=\Lie(T)$. For $\alpha\in\Phi$ we define $\fg_{\alpha}$ to be the corresponding root space of $\fg$, and we set $\fn^{+}=\bigoplus_{\alpha\in\Phi^{+}}\fg_{\alpha}$ and $\fn^{-}=\bigoplus_{\alpha\in\Phi^{+}}\fg_{-\alpha}$.

The character group of $T$ will be denoted $X(T)=\Hom(T,\bG_m)$ and the cocharacter group of $T$ will be denoted by $Y(T)=\Hom(\bG_m,T)$. We shall denote by $<\cdot,\cdot>:X(T)\times Y(T)\to \bZ$ the standard bilinear form as in \cite[II.1.3]{Jan3}.

The Lie algebra $\fg$ has basis consisting of $\ve_{\alpha}$ for $\alpha\in\Phi$ and $\vh_t$ for $1\leq t\leq d$, where $d=\dim(\fh)$, as in \cite[II.1.11]{Jan3}.

\section{Poincar\'{e}-Birkhoff-Witt for higher universal enveloping algebras}
\label{s1}

Let $G$ be an algebraic group over the algebraically closed field $\bK$, with coordinate algebra $\bK[G]$. Let us recall the construction of the distribution algebra of $G$ and of the higher universal enveloping algebras of $G$. 

For $n\in\bN$, we define the vector space $\Di_n(G)$ to consist of all linear maps $\delta:\bK[G]\to\bK$ which vanish on $I^{n+1}$, where $I$ is the augmentation ideal of $\bK[G]$. We further define $\Di_n^{+}(G)$ to be the subspace of all $\delta\in\Di_n(G)$ with $\delta(1)=0$. The distribution algebra of $G$ is then defined to be the algebra
$$\Di(G)=\bigcup_{n\in\bN}\Di_n(G),$$
with multiplication defined as follows: if $\delta\in\Di_n(G)$, $\mu\in\Di_m(G)$, then $\delta\mu$ is the map
$$\bK[G]\xrightarrow{\Delta}\bK[G]\otimes\bK[G]\xrightarrow{\delta\otimes\mu} \bK\otimes\bK\xrightarrow{\sim}\bK,$$
where $\Delta$ is the comultiplication map on $\bK[G]$. In particular, one can show that $\delta\mu\in\Di_{n+m}(G)$ and $[\delta,\mu]\in\Di_{n+m-1}(G)$. The algebra has the structure of a cocommutative Hopf algebra.

For $r\in\bN$, we can define (as in \cite{West}) the $r$-th higher universal enveloping algebra $U^{[r]}(G)$ as follows:
$$U^{[r]}(G)\coloneqq \frac{T(\Di^{+}_{p^{r+1}-1}(G))}{Q_r},$$
where $Q_r$ is the ideal generated by the two relations 

(i) $\delta\otimes \mu=\delta\mu$ if $\delta\in \Di^{+}_i(G)$, $\mu\in \Di^{+}_j(G)$ with $i+j<p^{r+1}$; and,

(ii) $\delta\otimes \mu-\mu\otimes \delta = [\delta,\mu]$ if $\delta\in \Di^+_i(G)$, $\mu\in \Di^+_j(G)$ with $i+j\leq p^{r+1}$,

and $T(\Di^{+}_{p^{r+1}-1}(G))$ is the tensor algebra of $\Di^{+}_{p^{r+1}-1}(G)$.
This algebra also has the structure of a cocommutative Hopf algebra.

In order to construct a Poincar\'{e}-Birkhoff-Witt basis of $U^{[r]}(G)$, we need to use a couple of Hopf algebraic notions. For a Hopf algebra $H$, we define the set of primitive elements $P(H)\coloneqq \{x\in H\,\vert\,\Delta(x)=x\otimes 1 + 1\otimes x\}$, and the set of group-like elements $G(H)\coloneqq \{x\in H\,\vert\,\Delta(x)=x\otimes x\}$. Given an element $x\in P(H)$, a sequence $x^{(0)},x^{(1)},x^{(2)},\ldots,x^{(k)}\in H$ is said to be a sequence of divided powers of $x$ if 
(i) $x^{(0)}=1$;
(ii) $x^{(1)}=x$; and,
(iii) $\Delta(x^{(l)})=\sum_{i=0}^l x^{(i)}\otimes x^{(l-i)}$ for all $l\geq 0$.

Suppose that $x_1,\ldots,x_n$ is a basis for the Lie algebra $\fg=\Lie(G)$. For each $1\leq i\leq n$, there exists an infinite sequence of divided powers $x_i^{(0)},x_i^{(1)},x_i^{(2)},\ldots$ of $x_i$ in the cocommutative Hopf algebra $\Di(G)$. It is well-known (see \cite{Sweed}) that the distribution algebra $\Di(G_{r})$ has basis
$$\{x_1^{(a_1)}x_2^{(a_2)}\ldots x_n^{(a_n)}\,\vert\, 0\leq a_i<p^r\,\mbox{for all}\,1\leq i\leq n\},$$
while the vector space $\Di_{k}(G)$ has basis
$$\{x_1^{(a_1)}x_2^{(a_2)}\ldots x_n^{(a_n)}\,\vert\, \sum_{i=1}^n a_i \leq k\}.$$

One can also observe that $x_i^{(k)}\in\Di_k(G)$ for all $1\leq i\leq n$ and $k\in\bN$.

In particular, there is an inclusion of vector spaces $\Di^{+}_{p^{r}-1}(G)\hookrightarrow \Di(G_{r})\subset \Di(G)$ which clearly satisfies the necessary conditions to employ the universal property of $U^{[r-1]}(G)$ and obtain an algebra homomorphism
$$\pi_{r-1}:U^{[r-1]}(G)\to\Di(G_{r}).$$
From the basis description of $\Di(G_{r})$ above, this map is surjective. 

It is straightforward to see that for $\delta\in\Di^{+}_{p^{r-1}}(G)$ the equality $\pi_{r-1}(\delta)^p=\pi_{r-1}(\delta^p)$ holds. Hence, letting $R_{r-1}$ be the ideal of $U^{[r-1]}(G)$ generated by $\delta^{\otimes p}-\delta^p$ for $\delta\in\Di^{+}_{p^{r-1}}(G)$, there is a surjective algebra homomorphism
$$\overline{\pi_{r-1}}:U^{[r-1]}(G)/R_{r-1}\twoheadrightarrow \Di(G_{r}).$$

\begin{lemma}\label{gens}
	The algebra $U^{[r-1]}(G)$ is spanned by the set 
	$$\{x_1^{(a_1)}\otimes (x_1^{(p^{r-1})})^{\otimes b_1}\otimes x_2^{(a_2)}\otimes (x_2^{(p^{r-1})})^{\otimes b_2}\otimes\ldots\otimes x_n^{(a_n)}\otimes (x_n^{(p^{r-1})})^{\otimes b_n}\,\vert\;0\leq a_i<p^{r-1},\, b_i\geq 0, \,\,1\leq i\leq n\}.$$
\end{lemma}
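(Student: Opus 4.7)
The plan is to prove this in two stages: first reduce to a smaller generating set, then sort the resulting tensor products into the desired normal form by a double induction.

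\textbf{Stage 1 (reduction to simple divided powers).} Since $U^{[r-1]}(G)$ is generated as an algebra by $\Di^+_{p^r-1}(G)$, whose basis is $\{x_1^{(a_1)} \cdots x_n^{(a_n)} : 0 < \sum_i a_i \leq p^r - 1\}$, and since relation (i) applied iteratively (valid since all partial degree sums stay $< p^r$) identifies each such product with the tensor product $x_1^{(a_1)} \otimes \cdots \otimes x_n^{(a_n)}$ in $U^{[r-1]}(G)$, it suffices to express each single factor $x_i^{(a)}$ for $0 < a \leq p^r - 1$ as a tensor product of elements from $\Sigma := \{x_i^{(b)} : 1 \leq i \leq n,\; 0 < b \leq p^{r-1}\}$. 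For such $a$, write $a = qp^{r-1} + s$ with $0 \leq q \leq p-1$ and $0 \leq s < p^{r-1}$. Using the divided power formulas $x_i^{(p^{r-1})} x_i^{(kp^{r-1})} = (k+1)\, x_i^{((k+1)p^{r-1})}$ (for $0 \leq k < p-1$) and $x_i^{(qp^{r-1})} x_i^{(s)} = x_i^{(a)}$ in $\Di(G)$, where the implicit binomial coefficients are nonzero modulo $p$ by Lucas's theorem, together with relation (i) (which applies since $qp^{r-1} \leq (p-1)p^{r-1} < p^r$ and $a < p^r$), I obtain $x_i^{(a)}$ as a nonzero scalar multiple of $(x_i^{(p^{r-1})})^{\otimes q} \otimes x_i^{(s)}$ in $U^{[r-1]}(G)$. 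Hence $U^{[r-1]}(G)$ is spanned by arbitrary tensor products of elements of $\Sigma$.

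\textbf{Stage 2 (normal form).} I now show every tensor product $T = y_1 \otimes \cdots \otimes y_k$ of elements of $\Sigma$ lies in the span of the given set, by lexicographic induction on the pair $(m, s)$, where $m = \sum_l \deg(y_l)$ is the total divided-power degree and $s$ counts the adjacent inversions (pairs with indices in the wrong order). If $s > 0$, apply relation (ii) to an adjacent inverted pair $y_l = x_i^{(a)}$, $y_{l+1} = x_j^{(b)}$ with $i > j$; this is valid since $a + b \leq 2p^{r-1} \leq p^r$. The swapped term has $s-1$ inversions at the same degree (handled by the inner induction), while the commutator correction lies in $\Di^+_{a+b-1}(G) \subset \Di^+_{p^r-1}(G)$ and hence has strictly smaller degree (handled by the outer induction after re-expanding it via Stage~1). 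If $s = 0$, the tensor is sorted by index into blocks $x_i^{(c_1)} \otimes \cdots \otimes x_i^{(c_{m_i})}$. Within each block the factors commute (via relation (i), using $c_l + c_{l'} \leq 2p^{r-1} \leq p^r$; the sole boundary case, $p=2$ with two factors equal to $x_i^{(p^{r-1})}$, trivially requires no swap), so I move all the ``medium'' factors $x_i^{(p^{r-1})}$ to the end of the block and iteratively merge the ``small'' factors via relation (i). Each merge replaces two smalls by $\binom{c+d}{c}\, x_i^{(c+d)}$, which is either a small, a medium, or (when $c + d > p^{r-1}$) rewrites via Stage~1 as a small tensored with a medium. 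In each case the number of smalls strictly decreases while preserving degree, yielding the form $x_i^{(a_i)} \otimes (x_i^{(p^{r-1})})^{\otimes b_i}$ with $0 \leq a_i < p^{r-1}$.

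\textbf{Main obstacle.} The most delicate points are the combinatorial bookkeeping with Lucas's theorem (to keep the relevant binomial coefficients nonzero mod $p$), the careful tracking of strict vs.~non-strict inequalities when invoking relations (i) and (ii), and the use of $[\Di_i(G), \Di_j(G)] \subseteq \Di_{i+j-1}(G)$ (a consequence of the associated graded of $\Di(G)$ being commutative) to guarantee that cross-index commutators produce correction terms of strictly smaller degree, so that the outer induction is well-founded.
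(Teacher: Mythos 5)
Your proof is correct and follows essentially the same two-stage strategy as the paper's: reduce the generating set to the single divided powers $x_i^{(a)}$ with $0<a\leq p^{r-1}$ (the paper's appeal to the basis of $\Di_{p^r-1}(G)$), then sort and merge via the relations, with commutator corrections dropping in degree (the paper's observations (i)--(iii)). One small redundancy worth noting: your merge step allows for the possibility that combining two ``smalls'' $x_i^{(c)}\otimes x_i^{(d)}$ with $c,d<p^{r-1}$ could produce a medium or require a further Stage~1 rewrite when $c+d\geq p^{r-1}$; in fact, by Kummer's theorem the binomial coefficient $\binom{c+d}{c}$ vanishes mod $p$ whenever $c,d<p^{r-1}$ and $c+d\geq p^{r-1}$ (adding $c$ and $d$ must carry out of the $(r-2)$-nd base-$p$ digit), so those branches contribute zero. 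This is exactly what the paper's observation~(i) records by bounding the merged degree by $\min(s+t,p^{r-1}-1)$; recognizing it lets you drop the ``small tensored with a medium'' case and simplifies the termination argument for the $s=0$ stage. Finally, when commuting same-index factors in Stage~2 you should cite relation~(ii) rather than relation~(i) at the boundary $c_l+c_{l'}=p^r$ (you in fact notice this and dispatch it as a trivial swap, so nothing is actually wrong, but the citation is slightly off).
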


\begin{proof}
	That these elements generate $U^{[r-1]}(G)$ is obvious from the given basis of $\Di_{p^{r}-1}(G)$. Hence, using a filtration argument, all that remains is to make the following observations:
	
	(i) For $1\leq i\leq n$, if $0\leq s,t\leq p^{r-1}$, then $x_i^{(s)}\otimes x_i^{(t)}-\binom{s+t}{s}x_i^{(s+t)}$ lies in the $\bK$-span of the set $$
	\{x_1^{(a_1)}\otimes x_2^{(a_2)}\otimes\ldots\otimes x_n^{(a_n)}\,\vert \,\,0\leq a_j<p^{r-1},\,\,1\leq j\leq n, \;\mbox{and}\,\sum_{j=1}^{n}a_j<s+t\}.
	$$
	Note here that $\binom{s+t}{s}=0$ if $s+t\geq p^{r-1}$ and $s,t<p^{r-1}$.
	
	(ii) For $0\leq s,t\leq p^{r-1}$ and $1\leq i\leq j\leq n$, the commutator $x_j^{(t)}\otimes x_i^{(s)}-x_i^{(s)}\otimes x_j^{(t)}$ lies in the $\bK$-span of the set
	$$\left\{\begin{array}{l}\quad
	x_1^{(a_1)}\otimes (x_1^{(p^{r-1})})^{\otimes b_{ 1}}\otimes x_2^{(a_2)}\otimes (x_2^{(p^{r-1})})^{\otimes b_2}\otimes\ldots\otimes x_n^{(a_n)}\otimes (x_n^{(p^{r-1})})^{\otimes b_n}\\\mbox{with} \;\;0\leq a_k<p^{r-1},\,\, b_k\geq 0,\,\,1\leq k\leq n,\;\mbox{and} \sum_{k=1}^{n}(a_k + b_kp^{r-1}) <s+t
	\end{array}\right\}.$$
	
	These observations follow from the defining relations of $U^{[r-1]}(G)$ and calculations with the divided power basis of $\Di(G_{r})=\bK[G_r]^{*}$.
	
\end{proof}

\begin{cor}
	The algebra $U^{[r-1]}(G)/R_{r-1}$ is spanned by the set 
	$$\{x_1^{(a_1)}\otimes (x_1^{(p^{r-1})})^{\otimes b_1}\otimes x_2^{(a_2)}\otimes (x_2^{(p^{r-1})})^{\otimes b_2}\otimes\ldots\otimes x_n^{(a_n)}\otimes (x_n^{(p^{r-1})})^{\otimes b_n}\,\vert\;0\leq a_i<p^{r-1},\, 0\leq b_i<p,\,1\leq i\leq n\}.$$	
\end{cor}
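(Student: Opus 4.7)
The plan is to deduce the corollary from Lemma~\ref{gens} by showing that, modulo $R_{r-1}$, any factor $(x_j^{(p^{r-1})})^{\otimes b_j}$ with $b_j \geq p$ can be rewritten in terms of monomials of strictly smaller total $b$-degree. I will proceed by induction on $\sum_{i=1}^n b_i$; the base case where every $b_i < p$ is immediate from Lemma~\ref{gens}.

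For the inductive step, I would start with a spanning monomial from Lemma~\ref{gens} having some $b_j \geq p$, and first bring $p$ copies of $x_j^{(p^{r-1})}$ into adjacent position to produce a contiguous factor $(x_j^{(p^{r-1})})^{\otimes p}$. The commutation identity (ii) from the proof of Lemma~\ref{gens} accomplishes this exactly when $x_j^{(p^{r-1})}$ passes a factor $x_j^{(s)}$ with $s < p^{r-1}$, and the commutator identity (iii) from the same proof handles the passage past a different $x_i^{(p^{r-1})}$ at the cost of correction terms lying in strictly smaller total degree $\sum a_i + \sum b_i p^{r-1}$, which are absorbed by the inductive hypothesis.

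Next, apply the defining relation of $R_{r-1}$: modulo $R_{r-1}$ one has $(x_j^{(p^{r-1})})^{\otimes p} \equiv (x_j^{(p^{r-1})})^p$, where the right-hand side is, by the construction of $R_{r-1}$, the image in $U^{[r-1]}(G)$ of an element of $\Di^+_{p^r-1}(G)$. Expanding this element in the divided-power basis $\{x_1^{(c_1)}\cdots x_n^{(c_n)} : \sum c_i \leq p^r-1\}$ of $\Di^+_{p^r-1}(G)$ and then performing the Euclidean decomposition $c_i = b'_i p^{r-1} + a'_i$ with $0 \leq a'_i < p^{r-1}$, the constraint $\sum b'_i p^{r-1} \leq \sum c_i \leq p^r - 1$ forces $\sum b'_i \leq p-1$, and in particular each $b'_i < p$. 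Reinserting this expansion into the original monomial and then reordering via the same identities from the proof of Lemma~\ref{gens} produces a linear combination of standard monomials of strictly smaller $\sum b_i$, closing the induction.

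The main obstacle is the cumulative bookkeeping of these rearrangement steps: both when first clustering $p$ copies of $x_j^{(p^{r-1})}$ and when reordering after substitution, one must verify that every correction term introduced has total $b$-degree strictly smaller than that of the monomial we started with. This reduces to the sharper form of observation (iii), which locates the commutators $x_j^{(t)} \otimes x_i^{(s)} - x_i^{(s)} \otimes x_j^{(t)}$ in the span of monomials with total degree $\sum a_i + \sum b_i p^{r-1} < s+t$, guaranteeing that each rearrangement strictly reduces an appropriate filtration invariant and the induction terminates.
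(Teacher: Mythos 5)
Your proof is correct and takes essentially the same approach as the paper: the paper's proof is a one-liner citing only the fact that $\delta^p\in\Di_{p^{r}-1}(G)$ for $\delta\in\Di_{p^{r-1}}(G)$ (Lemma 3.2.1 in \cite{West}), which is precisely the observation you exploit when expanding $(x_j^{(p^{r-1})})^p$ in the divided-power basis, and the filtration bookkeeping you carry out is left implicit there. One small simplification: in a standard monomial from Lemma~\ref{gens} the $b_j$ copies of $x_j^{(p^{r-1})}$ are already adjacent, so your initial clustering step is unnecessary, and you should commit from the start to inducting on the total degree $\sum a_i + \sum b_i p^{r-1}$ rather than on $\sum b_i$, since that is the invariant that observation (iii) actually controls.
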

\begin{proof}
	This follows from the above lemma since, for $\delta\in\Di_{p^{r-1}}(G)$, $\delta^p\in\Di_{p^{r}-1}(G)$ by Lemma 3.2.1 in \cite{West}.
\end{proof}

Hence, $\dim(U^{[r-1]}(G)/R_{r-1})\leq p^{r\dim(\fg)}$. However, we know that $U^{[r-1]}(G)/R_{r-1}$ surjects onto $\Di(G_{r})$, which has dimension $p^{r\dim(\fg)}$. Thus, $U^{[r-1]}(G)/R_{r-1}\cong \Di(G_{r})$. 

In particular, the universal property of the algebra $U^{[r-1]}(G)/R_{r-1}$ gives an algebra homomorphism $\Di(G_{r})\to U^{[r]}(G)$. Composing with $\pi_r$ then gives an algebra homomorphism $\Di(G_{r})\to\Di(G_{r+1})$ which, by considering the effect on the basis, is clearly injective. Hence, there is an inclusion $\Di(G_{r})\hookrightarrow U^{[r]}(G)$ of algebras.

The above results show that $\Di(G_{r})$ is a Hopf subalgebra of $U^{[r]}(G)$, since the coalgebra structure on $U^{[r]}(G)$ is extended from the coalgebra structure on $\Di_{p^{r+1}-1}(G)\subseteq\Di(G_{r})$ using the universal property given in Proposition 3.1.1 in \cite{West}, and similarly for the antipode. In fact, the given bases of $\Di(G_{r})$ and of $\Di_k(G)$ show that, as in Lemma 7.1.1(1) in \cite{West}, $\Di(G_{r})$ is normal in $U^{[r]}(G)$.

More generally, the results of Section 4 in \cite{West} all hold for an arbitrary affine algebraic group $G$ -- with one notable difference. Namely, we may no longer assume that $G$ has an $\bF_p$-form, and so we must use the standard Frobenius morphism rather than the geometric Frobenius morphism throughout. The reader can check that the only meaningful change this induces is to turn $\Upsilon_{r,s}$ into a Hopf algebra homomorphism from $U^{[r]}(G)$ to $U^{[r-s]}(G)^{(s)}$ instead of $U^{[r-s]}(G)$. Other than this, the only place in which the reductivity of $G$ is used in that section is to show that $\Upsilon_{r,s}$ is surjective, which now follows from Lemma~\ref{gens}. Hence, the whole of Lemma 7.1.1 in \cite{West} holds for an arbitrary affine algebraic group. 

In particular, $\Di(G_{r})\subset U^{[r]}(G)$ is a $U(\fg)^{(r)}$-Galois extension with $\Di(G_{r})=U^{[r]}(G)^{co U(\fg)^{(r)}}$.

\begin{prop}\label{cleft}
	The $U(\fg)^{(r)}$-extension $\Di(G_{r})\subset U^{[r]}(G)$ is $U(\fg)^{(r)}$-cleft.
\end{prop}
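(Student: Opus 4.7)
The plan is to establish the normal basis property and invoke the Doi--Takeuchi theorem, which says that for any Hopf--Galois extension $B \subset A$ by $H$, cleftness is equivalent to the existence of a $(B,H)$-Hopf module isomorphism $B \otimes H \cong A$. Since the Galois structure of $\Di(G_r) \subset U^{[r]}(G)$ has already been recorded, it suffices to exhibit such an isomorphism explicitly.

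Applying the argument of Lemma~\ref{gens} to $U^{[r]}(G)$ (with the shift $r-1 \leadsto r$), together with the dimension count from the subsequent corollary, yields the PBW-type basis
\[
\{x_1^{(a_1)} \otimes (x_1^{(p^r)})^{\otimes b_1} \otimes \cdots \otimes x_n^{(a_n)} \otimes (x_n^{(p^r)})^{\otimes b_n} : 0 \le a_i < p^r,\, b_i \ge 0\}
\]
for $U^{[r]}(G)$. Guided by this, I would define a linear map $\gamma : U(\fg)^{(r)} \to U^{[r]}(G)$ on the PBW basis of $U(\fg)^{(r)}$ by $\gamma(\bar x_1^{b_1} \cdots \bar x_n^{b_n}) = (x_1^{(p^r)})^{b_1} \cdots (x_n^{(p^r)})^{b_n}$, where $\bar x_i$ denotes the image of the Lie algebra basis vector $x_i$ in $U(\fg)^{(r)}$. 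The induced map $\Phi : \Di(G_r) \otimes U(\fg)^{(r)} \to U^{[r]}(G)$, $b \otimes h \mapsto b \gamma(h)$, is then a $\bK$-linear bijection by the PBW basis and is tautologically a left $\Di(G_r)$-module map.

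The crux is to verify that $\gamma$ (hence $\Phi$) is a right $U(\fg)^{(r)}$-comodule map with respect to $\rho = (\Id \otimes \Upsilon_{r,r}) \circ \Delta$. The equality $\Di(G_r) = U^{[r]}(G)^{co\,U(\fg)^{(r)}}$ forces $\Upsilon_{r,r}(\Di(G_r)^+) = 0$, so the divided-power coproduct $\Delta(x_i^{(p^r)}) = \sum_{j=0}^{p^r} x_i^{(j)} \otimes x_i^{(p^r-j)}$ collapses under $\rho$ to $\rho(x_i^{(p^r)}) = x_i^{(p^r)} \otimes 1 + 1 \otimes \bar x_i$, matching $(\gamma \otimes \Id)\Delta(\bar x_i)$. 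Because both $\rho$ and $\Delta_{U(\fg)^{(r)}}$ are algebra maps and the pairs $(x_i^{(p^r)} \otimes 1,\, 1 \otimes \bar x_i)$ commute in $U^{[r]}(G) \otimes U(\fg)^{(r)}$, binomial expansion factor-by-factor propagates the identity to all PBW monomials. The main obstacle will be the bookkeeping for the noncommutativity of $x_i^{(p^r)}$ and $\bar x_j$ across distinct $i, j$; however, the fixed ordering used both to define $\gamma$ and to list PBW monomials ensures that on each side the resulting double sums coincide term by term. Once the normal basis property is established, the Doi--Takeuchi theorem provides the convolution-invertible cleaving map and concludes the proof.
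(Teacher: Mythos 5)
Your section map $\gamma$ and the comodule-map verification coincide with the paper's, and your computation of $\rho(x_i^{(p^r)})$ is correct; the noncommutativity across distinct indices that you flag is harmless because the relevant factors live in different tensor legs of $U^{[r]}(G)\otimes U(\fg)^{(r)}$. The gap is in the claim that $\Phi\colon \Di(G_r)\otimes U(\fg)^{(r)}\to U^{[r]}(G)$, $b\otimes h\mapsto b\gamma(h)$, is a bijection ``by the PBW basis.'' The PBW basis of $U^{[r]}(G)$ is Corollary~\ref{basis}, which the paper \emph{derives from} Proposition~\ref{cleft} via the crossed-product decomposition $U^{[r]}(G)\cong\Di(G_r)\#_\sigma U(\fg)^{(r)}$; it is not available in advance. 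Lemma~\ref{gens} (shifted $r-1\leadsto r$) gives only a spanning set, and the dimension count in the corollary following it establishes linear independence only in the finite-dimensional quotient $U^{[r]}(G)/R_r\cong\Di(G_{r+1})$, not in the infinite-dimensional algebra $U^{[r]}(G)$ itself. So the route through the normal basis property and the Doi--Takeuchi equivalence, as you have set it up, is circular.

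The paper sidesteps this by proving cleftness directly: after checking that $\gamma$ is a comodule map, it uses only that $U^{[r]}(G)$ is a Hopf algebra to conclude that $\gamma$ is convolution-invertible with inverse $S\gamma$, and cleftness then holds by definition. No PBW basis of $U^{[r]}(G)$ is needed at that point; both the normal basis property and Corollary~\ref{basis} are consequences of cleftness via Montgomery's Theorem 8.2.4. To make your argument non-circular you would need an independent proof that $\Phi$ is injective, which in this setting essentially forces you to produce an explicit inverse to $\Phi$ --- and that is the convolution-inverse computation in disguise.
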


\begin{proof}
	We need to show that there is a convolution-invertible right $U(\fg)^{(r)}$-comodule map $\gamma:U(\fg)^{(r)}\to U^{[r]}(G)$. Since $U(\fg)^{(r)}$ has basis $\{ x_1^{a_1}x_2^{a_2}\ldots x_n^{a_n}\,\vert\; a_i\geq 0,\,1\leq i\leq n\}$, we simply need to define $\gamma(x_1^{a_1}x_2^{a_2}\ldots x_n^{a_n})$ for all $a_1,a_2,\ldots,a_n\geq0$.
	
	As such, we define $$\gamma(x_1^{a_1}x_2^{a_2}\ldots x_n^{a_n})=(x_1^{(p^r)})^{\otimes a_1}\otimes (x_2^{(p^r)})^{\otimes a_2}\otimes\ldots\otimes (x_n^{(p^r)})^{\otimes a_n}\in U^{[r]}(G)$$
	for all $a_1,a_2,\ldots,a_n\geq 0$.
	
	To show that $\gamma$ is a $U(\fg)^{(r)}$-comodule map we need to show that, for $y\in U(\fg)^{(r)}$,
	$$\sum\gamma(y)_{(1)}\otimes \overline{\gamma(y)_{(2)}}=\sum \gamma(y_{(1)})\otimes y_{(2)}$$
	where we use Sweedler's $\Sigma$-notation and we write $\overline{\gamma(y)_{(2)}}$ for $\Upsilon_{r,r}(\gamma(y)_{(2)})$.
	
	It is enough to show this for basis elements. Note that, if $y=x_1^{a_1}x_2^{a_2}\ldots x_n^{a_n}$ with $a_1,a_2,\ldots,a_n\geq 0$, then
	\begin{align*}
		\Delta(y)=(x_1\otimes 1 + 1\otimes x_1)^{a_1}(x_2\otimes 1 + 1\otimes x_2)^{a_2}\ldots (x_n\otimes 1 + 1\otimes x_n)^{a_n}\\
		= \sum_{b_i+c_i=a_i}\binom{a_1}{b_1}\binom{a_2}{b_2}\ldots\binom{a_n}{b_n} x_1^{b_1}x_2^{b_2}\ldots x_n^{b_n}\otimes x_1^{c_1}x_2^{c_2}\ldots x_n^{c_n}.
	\end{align*}
	
	Furthermore, writing $\Delta_{U(\fg)^{(r)}}$ for the $U(\fg)^{(r)}$-comodule map of the comodule $U^{[r]}(G)$,
	\begin{multline*}
		\Delta_{U(\fg)^{(r)}}((x_1^{(p^r)})^{\otimes a_1}\otimes (x_2^{(p^r)})^{\otimes a_2}\otimes\ldots\otimes (x_n^{(p^r)})^{\otimes a_n})\\
		=\Delta_{U(\fg)^{(r)}}(x_1^{(p^r)})^{\otimes a_1}\otimes \Delta_{U(\fg)^{(r)}}(x_2^{(p^r)})^{\otimes a_2}\otimes\ldots\otimes \Delta_{U(\fg)^{(r)}}(x_n^{(p^r)})^{\otimes a_n},
	\end{multline*}
	while, for any $1\leq i\leq n$,	
	$$\Delta_{U(\fg)^{(r)}}(x_i^{(p^r)})=\sum_{j=0}^{p^r} x_i^{(j)}\otimes\overline{x_i^{(p^r-j)}}=x_i^{(p^r)}\otimes 1 + 1\otimes x_i$$
	since $\overline{x_i^{(s)}}=0$ for all $0<s<p^r$.
	
	Hence, $\sum\gamma(y)_{(1)}\otimes \overline{\gamma(y)_{(2)}}$ equals
	\begin{multline*}
		\sum_{b_i+c_i=a_i}\binom{a_1}{b_1}\binom{a_2}{b_2}\ldots\binom{a_n}{b_n} ((x_1^{(p^r)})^{\otimes b_1}\otimes (x_2^{(p^r)})^{\otimes b_2}\otimes\ldots\otimes (x_n^{(p^r)})^{\otimes b_n}) \otimes (x_1^{c_1}x_2^{c_2}\ldots x_n^{c_n})
	\end{multline*}	
	
	and $\sum\gamma(y_{(1)})\otimes y_{(2)}$ equals
	\begin{multline*}
		\sum_{b_i+c_i=a_i}\binom{a_1}{b_1}\binom{a_2}{b_2}\ldots\binom{a_n}{b_n} ((x_1^{(p^r)})^{\otimes b_1}\otimes (x_2^{(p^r)})^{\otimes b_2}\otimes\ldots\otimes (x_n^{(p^r)})^{\otimes b_n}) \otimes (x_1^{c_1}x_2^{c_2}\ldots x_n^{c_n}).
	\end{multline*}
	
	Thus, $\gamma$ is a $U(\fg)^{(r)}$-comodule map. Furthermore, $\gamma$ is convolution-invertible (with convolution inverse $S\gamma$), since $U^{[r]}(G)$ is a Hopf algebra.
\end{proof}

By Theorem 8.2.4 in \cite{Mont}, $\Di(G_{r})\subset U^{[r]}(G)$ has the normal basis property. Hence, $U^{[r]}(G)\cong \Di(G_{r})\otimes U(\fg)^{(r)}$ as left $\Di(G_{r})$-modules and right $U(\fg)^{(r)}$-comodules. In particular, Corollary 8.2.5 in \cite{Mont} shows that 
$$U^{[r]}(G)\cong\Di(G_{r})\#_{\sigma} U(\fg)^{(r)},$$
a crossed product of $\Di(G_{r})$ with $U(\fg)^{(r)}$.

\begin{cor}\label{basis}
	$U^{[r]}(G)$ has basis 
	$$\{x_1^{(a_1)}x_2^{(a_2)}\ldots x_n^{(a_n)}(x_1^{(p^r)})^{b_1}(x_2^{(p^r)})^{b_2}\ldots (x_n^{(p^r)})^{b_n}\,\vert\;0\leq a_i<p^{r},\, 0\leq b_i,\, 1\leq i\leq n\}.$$
\end{cor}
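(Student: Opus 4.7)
The plan is to read off the basis directly from the cleft extension structure $U^{[r]}(G)\cong \Di(G_{r})\#_{\sigma} U(\fg)^{(r)}$ established in Proposition~\ref{cleft}. By the normal basis property coming from Theorem 8.2.4 in \cite{Mont}, the multiplication map
$$\Di(G_{r})\otimes U(\fg)^{(r)}\longrightarrow U^{[r]}(G),\qquad d\otimes y\longmapsto d\cdot\gamma(y),$$
is a bijection of left $\Di(G_{r})$-modules. So to produce a basis of $U^{[r]}(G)$ it is enough to take the tensor products of a basis of $\Di(G_{r})$ with the image under $\gamma$ of a basis of $U(\fg)^{(r)}$.

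For the $\Di(G_{r})$-factor I would use the known PBW basis $\{x_1^{(a_1)}x_2^{(a_2)}\ldots x_n^{(a_n)}:0\leq a_i<p^{r}\}$ recalled before Lemma~\ref{gens}. For the $U(\fg)^{(r)}$-factor I would use the standard PBW basis $\{x_1^{b_1}x_2^{b_2}\ldots x_n^{b_n}:b_i\geq 0\}$ which is exactly the set on which $\gamma$ was defined in the proof of Proposition~\ref{cleft}. By construction,
$$\gamma(x_1^{b_1}x_2^{b_2}\ldots x_n^{b_n})=(x_1^{(p^r)})^{\otimes b_1}\otimes(x_2^{(p^r)})^{\otimes b_2}\otimes\ldots\otimes(x_n^{(p^r)})^{\otimes b_n},$$
which inside $U^{[r]}(G)$ is simply the product $(x_1^{(p^r)})^{b_1}(x_2^{(p^r)})^{b_2}\ldots(x_n^{(p^r)})^{b_n}$. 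Multiplying these two families together in $U^{[r]}(G)$ produces exactly the claimed set.

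I do not anticipate a genuine obstacle: the work has all been done, and the corollary is a direct translation of the isomorphism of $\Di(G_{r})$-modules back into explicit elements. The only point that deserves a one-line remark is that one should not expect $\gamma$ to be an algebra map — it is only a convolution-invertible comodule map — but this is irrelevant for producing a basis, since the cleft/normal-basis isomorphism is only a $\Di(G_{r})$-module isomorphism, not an algebra isomorphism.
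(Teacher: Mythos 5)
Your proposal is correct and is essentially the argument the paper intends: the corollary is stated immediately after establishing the normal basis property via Theorem 8.2.4 of \cite{Mont}, so that the left $\Di(G_r)$-module isomorphism $d\otimes y\mapsto d\gamma(y)$ applied to the PBW bases of $\Di(G_r)$ and $U(\fg)^{(r)}$ yields precisely the claimed set. Your remark that $\gamma$ need not be an algebra map but that this is irrelevant for extracting a vector space basis is also the right thing to note.
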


[Note that in this corollary we suppress the $\otimes$-symbol when we write the multiplication in $U^{[r]}(G)$. We shall do similarly throughout this paper when no confusion is likely].

Now that we know a basis for $U^{[r]}(G)$, we can obtain the following corollary. The idea for this proof is due to Lewis Topley.
\begin{cor}\label{central}
	Let $G$ be an affine algebraic group. For $\delta\in\Di_{p^r}^{+}(G)$, $\delta^{\otimes p} -\delta^p$ is central in $U^{[r]}(G)$.
\end{cor}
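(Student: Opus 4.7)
I want to show that the inner derivation $\mathrm{ad}(z)$ on $U^{[r]}(G)$, where $z := \delta^{\otimes p}-\delta^p$, is identically zero. The cornerstone is the universal characteristic-$p$ identity $\mathrm{ad}(x)^p=\mathrm{ad}(x^p)$, valid in any associative $\bK$-algebra, which applied in $U^{[r]}(G)$ gives $\mathrm{ad}(\delta^{\otimes p}) = \mathrm{ad}(\delta)^p$. Thus the statement reduces to checking that the two derivations $\mathrm{ad}(\delta)^p$ and $\mathrm{ad}(\delta^p)$ coincide on a set of algebra generators of $U^{[r]}(G)$. Corollary~\ref{basis} supplies a convenient such set: $\Di(G_r)$ together with the elements $x_i^{(p^r)}$ for $1 \leq i \leq n$.

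The case $y \in \Di(G_r)$ can be handled abstractly. Since $\pi_r\colon U^{[r]}(G)\to \Di(G_{r+1})$ is a Hopf algebra surjection and $\pi_r(\delta^{\otimes p})=\pi_r(\delta)^p=\pi_r(\delta^p)$, we have $\pi_r(z)=0$ and consequently $[z,y]\in \ker\pi_r$ for every $y\in U^{[r]}(G)$. When $y\in\Di(G_r)$, normality of $\Di(G_r)\subset U^{[r]}(G)$ keeps $[z,y]$ inside $\Di(G_r)$; but the composition $\Di(G_r)\hookrightarrow U^{[r]}(G)\xrightarrow{\pi_r}\Di(G_{r+1})$ was shown earlier to be the natural injective inclusion, so $\Di(G_r)\cap\ker\pi_r=0$ and $[z,y]=0$.

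For the remaining generators $y=x_i^{(p^r)}$ I would compute $[z,x_i^{(p^r)}]$ directly. Unfolding $\mathrm{ad}(\delta)^p(x_i^{(p^r)})$ as an iterated commutator, each inner bracket $[\delta,\cdot]$ in $U^{[r]}(G)$ agrees with the corresponding bracket in $\Di(G)$ by relation (ii) as long as the sum of the two degrees is at most $p^{r+1}$. Tracking the growth — each application of $\mathrm{ad}(\delta)$ increases the $\Di(G)$-degree by at most $p^r-1$ — shows that the first $p-1$ iterates stay safely below the bound. The hard part will be the final iteration, where the naive degree bound just fails: here one expands the intermediate element in the PBW basis of $U^{[r]}(G)$ and checks that the mismatch between the $U^{[r]}(G)$- and $\Di(G)$-commutators lies in the ideal generated by the element $z$ itself, and hence contributes nothing. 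With that step in hand, the characteristic-$p$ identity inside $\Di(G)$ collapses both sides to $[\delta^p,x_i^{(p^r)}]_{\Di(G)}$, giving $[z,x_i^{(p^r)}]=0$. The Leibniz rule then propagates vanishing of $\mathrm{ad}(z)$ on $\Di(G_r)\cup\{x_i^{(p^r)}\}_i$ to vanishing on all of $U^{[r]}(G)$.
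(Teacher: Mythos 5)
The paper proves this corollary by a completely different route. Since any affine algebraic group $G$ embeds as a closed subgroup of some $\GL_m$, the resulting filtered inclusion $\Di(G)\subseteq\Di(\GL_m)$ induces via the universal property an algebra homomorphism $\iota\colon U^{[r]}(G)\to U^{[r]}(\GL_m)$; inspecting the PBW basis from Corollary~\ref{basis} shows $\iota$ is injective. Since $\iota(\delta^{\otimes p}-\delta^p)=\iota(\delta)^{\otimes p}-\iota(\delta)^p$ is central in $U^{[r]}(\GL_m)$ by the reductive case of \cite{West}, injectivity transfers centrality back to $U^{[r]}(G)$. Your proposal instead attempts a direct computation, and it contains genuine gaps.

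First, for the generators $y\in\Di(G_r)$, you invoke normality to conclude $[z,y]\in\Di(G_r)$. But Hopf-algebraic normality asserts that the adjoint action $\sum z_{(1)}\,y\,S(z_{(2)})$ stays inside $\Di(G_r)$; this coincides with the ordinary commutator $[z,y]$ only when $z$ is primitive. For a general $\delta\in\Di_{p^r}^+(G)$ the element $z=\delta^{\otimes p}-\delta^p$ need not be primitive, so the step $[z,y]\in\Di(G_r)$ is unjustified.

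Second, and more seriously, the final iteration for $y=x_i^{(p^r)}$ is exactly where the content of the statement lives, and it is not resolved. The degree count bears this out: after $p-1$ applications of $\mathrm{ad}(\delta)$ the intermediate element has $\Di(G)$-degree up to $p^{r+1}-p+1$, so the $p$-th bracket has degree sum $p^r+p^{r+1}-p+1$, which exceeds $p^{r+1}$ for \emph{every} $r\geq 1$; relation~(ii) never applies there. This is not an edge case — it is the generic situation. Your proposed remedy, that the mismatch between the $U^{[r]}(G)$- and $\Di(G)$-commutators "lies in the ideal generated by $z$, and hence contributes nothing," is circular: what must be proved is $[z,x_i^{(p^r)}]=0$, and "$[z,y]$ lies in $(z)$" gives no information toward that end. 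The paper's embedding argument is designed precisely to sidestep this final-bracket obstruction by inheriting the answer from the reductive case, where it was settled by other means.
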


\begin{proof}
	If $G$ is an affine algebraic group, then there is an inclusion $\Di(G)\subseteq \Di(\GL_m)$ for some $m\in\bN$, which restricts to an inclusion $\Di_k(G)\subseteq\Di_k(\GL_m)$ for all $k\in\bN$. In particular, the inclusion $\Di_{p^{r+1}-1}^{+}(G)\hookrightarrow\Di_{p^{r+1}-1}^{+}(\GL_m)\hookrightarrow U^{[r]}(\GL_m)$ induces, by the universal property, an algebra homomorphism
	$$\iota:U^{[r]}(G)\to U^{[r]}(\GL_m).$$
	Let $x_1,\ldots,x_n$ be a basis of $\fg=\Lie(G)$. This can be extended to a basis $x_1\ldots,x_{m^2}$ of $\fg\fl_m=\Lie(\GL_m)$. 
	
	The map $\iota$ sends $$x_1^{(a_1)}x_2^{(a_2)}\ldots x_n^{(a_n)}(x_1^{(p^r)})^{b_1}(x_2^{(p^r)})^{b_2}\ldots (x_n^{(p^r)})^{b_n}\in U^{[r]}(G)$$ to $$x_1^{(a_1)}x_2^{(a_2)}\ldots x_n^{(a_n)}(x_1^{(p^r)})^{b_1}(x_2^{(p^r)})^{b_2}\ldots (x_n^{(p^r)})^{b_n}\in U^{[r]}(\GL_m).$$ Hence, by Corollary~\ref{basis}, $\iota$ is injective.
	
	In particular, there is an inclusion $\iota:U^{[r]}(G)\hookrightarrow U^{[r]}(\GL_m)$. Now, for $\delta\in\Di_{p^r}^{+}(G)$, $\iota(\delta)^{\otimes p} - \iota(\delta)^p$ is central in $U^{[r]}(\GL_m)$ by \cite{West}, since $\GL_m$ is reductive.
	
	Hence, $\delta^{\otimes p} - \delta^p$ is central in $U^{[r]}(G)$.
\end{proof}

We can now proceed as in Section 3.4 in \cite{West} to obtain a number of corollaries for an arbitrary algebraic group $G$. Let $Z^{[r]}_p$ be the central subalgebra of $U^{[r]}(G)$ generated by all $\delta^{\otimes p} - \delta^p$ for $\delta\in\Di_{p^r}^{+}(G)$.

\begin{cor}
	The algebra $Z^{[r]}_p$ is generated by the elements $(x_i^{(p^r)})^{\otimes p}-(x_i^{(p^r)})^{p}$ for $i=1,\ldots,n$. Furthermore, these elements are algebraically independent.
\end{cor}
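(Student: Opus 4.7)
My plan has two pieces: algebraic independence, which falls out directly from the PBW basis of Corollary~\ref{basis}, and the generation statement, which I would reduce to the reductive case handled in \cite{West} using the embedding of Corollary~\ref{central}.

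For algebraic independence, the main observation is that $(x_i^{(p^r)})^{p}=0$ inside $\Di(G)$: iterating the divided-power product $x_i^{(a)}x_i^{(b)}=\binom{a+b}{a}x_i^{(a+b)}$ gives $(x_i^{(p^r)})^{p}=\bigl(\prod_{k=2}^{p}\binom{kp^r}{p^r}\bigr)x_i^{(p^{r+1})}$, and Lucas's theorem makes the factor $\binom{p^{r+1}}{p^r}=\binom{p\cdot p^r}{p^r}$ vanish modulo $p$. Therefore $z_i:=(x_i^{(p^r)})^{\otimes p}-(x_i^{(p^r)})^{p}$ is exactly the PBW basis element of Corollary~\ref{basis} with $a_j=0$ for all $j$, $b_i=p$, and $b_j=0$ for $j\neq i$. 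Since the $z_i$ commute (they are central), the monomial $z_1^{k_1}\cdots z_n^{k_n}$ equals the PBW basis element with $a_j=0$ and $b_j=pk_j$; distinct tuples $(k_1,\ldots,k_n)$ yield distinct basis vectors, and algebraic independence of the $z_i$ follows.

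For generation, I would use the injective algebra homomorphism $\iota:U^{[r]}(G)\hookrightarrow U^{[r]}(\GL_m)$ built in the proof of Corollary~\ref{central}. Extending $x_1,\ldots,x_n$ to a basis $x_1,\ldots,x_{m^2}$ of $\gl_m$, and writing $z_j^{\GL_m}:=(x_j^{(p^r)})^{\otimes p}-(x_j^{(p^r)})^{p}\in U^{[r]}(\GL_m)$, I have $\iota(z_i)=z_i^{\GL_m}$ for $1\leq i\leq n$ by construction. For any $\delta\in\Di^{+}_{p^r}(G)\subseteq\Di^{+}_{p^r}(\GL_m)$, the reductive case of the present corollary (Section~3.4 of \cite{West}) then yields $\iota(\delta^{\otimes p}-\delta^{p})\in\bK[z_1^{\GL_m},\ldots,z_{m^2}^{\GL_m}]$.

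It remains to identify the intersection $\iota(U^{[r]}(G))\cap\bK[z_1^{\GL_m},\ldots,z_{m^2}^{\GL_m}]$. By the second paragraph applied to $\GL_m$, any polynomial in the $z_j^{\GL_m}$ is a $\bK$-linear combination of the PBW basis elements $\prod_j(x_j^{(p^r)})^{pk_j}$ of $U^{[r]}(\GL_m)$. On the other hand, Corollary~\ref{basis} identifies $\iota(U^{[r]}(G))$ with the span of those PBW basis elements of $U^{[r]}(\GL_m)$ involving no factor $x_j^{(\cdot)}$ or $(x_j^{(p^r)})^{\cdot}$ with $j>n$. Intersecting these two descriptions forces $k_j=0$ for all $j>n$, so $\iota(U^{[r]}(G))\cap\bK[z_1^{\GL_m},\ldots,z_{m^2}^{\GL_m}]=\iota(\bK[z_1,\ldots,z_n])$. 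Applying $\iota^{-1}$ gives $\delta^{\otimes p}-\delta^{p}\in\bK[z_1,\ldots,z_n]$, as required. The only substantive point is the vanishing of $(x_i^{(p^r)})^{p}$ in $\Di(G)$; everything else is bookkeeping with the PBW basis and the $\GL_m$-embedding.
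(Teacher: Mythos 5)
Your proof is correct. For algebraic independence the argument is essentially the expected one via the PBW basis of Corollary~\ref{basis}, and your observation that $(x_i^{(p^r)})^p=0$ in $\Di(G)$ (so that $z_i$ is literally a PBW monomial rather than just having $(x_i^{(p^r)})^{\otimes p}$ as leading term) is a clean way to phrase it; it is also forced by Lemma 3.2.1 of \cite{West}, since the Lucas coefficient $c$ in $(x_i^{(p^r)})^p=c\,x_i^{(p^{r+1})}$ would otherwise place an element of $\Di_{p^{r+1}}(G)\setminus\Di_{p^{r+1}-1}(G)$ in $\Di_{p^{r+1}-1}(G)$.

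For generation, however, you take a genuinely different route from the one the paper gestures at. The paper's proof is ``proceed as in Section~3.4 of \cite{West}'': a direct argument that, with centrality now available from Corollary~\ref{central}, re-runs the reductive-case computation (essentially showing that $\delta\mapsto\delta^{\otimes p}-\delta^p$ kills $\Di^{+}_{p^r-1}(G)$ and is Frobenius-semilinear, so that on a basis of $\Di^{+}_{p^r}(G)$ it lands in the span of the $z_i$). You instead push the $\GL_m$-embedding of Corollary~\ref{central} one step further: you use the reductive case for $\GL_m$ as a black box and then identify $\iota(U^{[r]}(G))\cap\bK[z_1^{\GL_m},\ldots,z_{m^2}^{\GL_m}]$ by comparing PBW expansions. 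This is modular and short, and it shows that the $\GL_m$ trick yields not just centrality but the whole structure of $Z_p^{[r]}$ for free; the trade-off is that it depends on the reductive result as input rather than being self-contained, whereas a direct re-run of West's argument would make the affine case logically independent. Both are valid, and the intersection bookkeeping you do is correct, since the monomials $\prod_j(z_j^{\GL_m})^{k_j}$ are distinct PBW basis elements of $U^{[r]}(\GL_m)$ and $\iota$ carries the PBW basis of $U^{[r]}(G)$ bijectively onto those PBW elements supported on $\{1,\ldots,n\}$.
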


\begin{cor}
	As a $Z^{[r]}_p$-module, $U^{[r]}(G)$ is free with basis $$\{x_1^{(a_1)}x_2^{(a_2)}\ldots x_n^{(a_n)}\,\vert\,0\leq a_1,\ldots,a_n<p^{r+1}\,\}.$$
\end{cor}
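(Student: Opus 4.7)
The plan is to convert the basis from Corollary~\ref{basis} into a free $Z^{[r]}_p$-basis of $U^{[r]}(G)$, and then change basis to obtain the desired set $S := \{x_1^{(c_1)} \cdots x_n^{(c_n)} : 0 \leq c_i < p^{r+1}\}$. The crucial identity is $y_i^p = z_i$ in $U^{[r]}(G)$, where $y_i := x_i^{(p^r)}$ and $z_i := (x_i^{(p^r)})^{\otimes p} - (x_i^{(p^r)})^p$ is the generator of $Z^{[r]}_p$ from the preceding corollary. This holds because the divided-power identity $x_i^{(a)} x_i^{(b)} = \binom{a+b}{a} x_i^{(a+b)}$ in $\Di(G)$, combined with Lucas's theorem, yields $(x_i^{(p^r)})^p = (p-1)!\binom{p^{r+1}}{p^r} x_i^{(p^{r+1})} = 0$ in characteristic $p$.

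Using this identity together with the centrality of the $z_i$ from Corollary~\ref{central}, I would first extract an alternative free $Z^{[r]}_p$-basis of $U^{[r]}(G)$. For each basis element $x_1^{(a_1)} \cdots x_n^{(a_n)} y_1^{b_1} \cdots y_n^{b_n}$ of Corollary~\ref{basis}, write $b_i = pq_i + s_i$ with $0 \leq s_i < p$; then $y_i^{b_i} = z_i^{q_i} y_i^{s_i}$, and pulling the central monomial $z_1^{q_1} \cdots z_n^{q_n}$ to the left establishes a bijection of bases which shows that $T := \{x_1^{(a_1)} \cdots x_n^{(a_n)} y_1^{s_1} \cdots y_n^{s_n} : 0 \leq a_i < p^r,\ 0 \leq s_i < p\}$ is a free $Z^{[r]}_p$-basis of $U^{[r]}(G)$. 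In particular, $U^{[r]}(G)$ is a free $Z^{[r]}_p$-module of rank $|T| = p^{n(r+1)} = |S|$.

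The next step is to show that $S$ spans $U^{[r]}(G)$ over $Z^{[r]}_p$. Introduce the degree filtration $F_k U^{[r]}(G)$ spanned by Corollary~\ref{basis} basis elements with $\sum a_i + p^r \sum b_i \leq k$; relation (ii) in the definition of $U^{[r]}(G)$ ensures that $\gr U^{[r]}(G)$ is commutative. By induction on $k$: if some $b_i \geq p$, extract $z_i$ via $y_i^p = z_i$ to reduce to a basis element of degree $k - p^{r+1}$; otherwise each $s_i := b_i < p$, and using commutativity of the associated graded together with the identity $x_i^{(a_i)}(x_i^{(p^r)})^{s_i} = s_i! \cdot x_i^{(a_i+s_i p^r)}$ (from relation (i), valid since $a_i + s_i p^r < p^{r+1}$, and Lucas's theorem, which makes the intermediate binomials units), the basis element is congruent to $\prod s_i! \cdot x_1^{(c_1)} \cdots x_n^{(c_n)}$ modulo $F_{k-1}$, where $c_i := a_i + s_i p^r \in [0, p^{r+1})$; the inductive hypothesis handles the remainder.

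Finally, the surjective $Z^{[r]}_p$-linear map from the free module on $S$ onto $U^{[r]}(G)$ is a surjection between free $Z^{[r]}_p$-modules of equal finite rank $p^{n(r+1)}$, hence an isomorphism by the standard fact that a surjective endomorphism of a finitely generated free module over a commutative ring is injective (via the adjugate-matrix identity). The principal technical obstacle is the filtration induction in the spanning step, which hinges on relation (ii) forcing all commutator corrections into strictly lower filtration, and on verifying that the resulting divided-power and binomial coefficient calculations do not vanish --- which they do not, since $s_i < p$ keeps $\prod s_i!$ invertible in $\bK$.
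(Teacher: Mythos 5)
Your proof is correct, and its skeleton --- extract the PBW basis of Corollary~\ref{basis}, observe that $(x_i^{(p^r)})^p=0$ in $\Di(G)$ so that $(x_i^{(p^r)})^{\otimes p}-(x_i^{(p^r)})^p=(x_i^{(p^r)})^{\otimes p}$, convert the basis to a free $Z^{[r]}_p$-basis, and then show by a filtration/degree induction that the stated monomials span over $Z^{[r]}_p$ and have the right cardinality --- is exactly the argument the paper is invoking (the paper gives no inline proof but refers the reader to Section~3.4 of \cite{West}, where this method is carried out for reductive $G$). Your computation via Lucas's theorem that $\binom{p^{r+1}}{p^r}\equiv 0\pmod p$ (hence $(x_i^{(p^r)})^p=0$) and that $\binom{a_i+kp^r}{p^r}\equiv k\pmod p$ for $k<p$ (hence $x_i^{(a_i)}(x_i^{(p^r)})^{s_i}=s_i!\,x_i^{(a_i+s_ip^r)}$ with $s_i!$ a unit) is the computational heart of the argument, and you have it right.

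One point you should tighten: the bare assertion that relation~(ii) makes $\gr U^{[r]}(G)$ commutative is not immediate, because relation~(ii) only applies to pairs $\delta\in\Di^+_i$, $\mu\in\Di^+_j$ with $i+j\leq p^{r+1}$, while the defining generating space $\Di^+_{p^{r+1}-1}(G)$ contains elements of degree up to $p^{r+1}-1$, for which the sum can exceed $p^{r+1}$. To make the commutativity of the associated graded precise, one should first observe (using relation~(i), as in the proof of Lemma~\ref{gens}) that $U^{[r]}(G)$ is already generated as an algebra by $\Di^+_{p^r}(G)$, i.e.\ by the elements $x_i^{(a)}$ with $a\leq p^r$; any two such generators have degrees summing to at most $2p^r\leq p^{r+1}$, so relation~(ii) applies to every pair and all commutator corrections drop filtration degree. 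This observation is implicit in Lemma~\ref{gens} and would close the small gap in your spanning step. Your rank-comparison conclusion via the surjective-endomorphism-is-injective fact is fine and is the standard way to finish.
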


\begin{cor}\label{fingen}
	The centre $Z^{[r]}(G)\coloneqq Z(\Upr)$ of $\Upr$ is a finitely generated algebra over $\bK$. As a $Z(\Upr)$-module, $\Upr$ is finitely generated.
\end{cor}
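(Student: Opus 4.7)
The plan is to leverage the two immediately preceding corollaries, which together place a polynomial subalgebra $Z^{[r]}_p$ inside the centre and show that $U^{[r]}(G)$ is a finite free module over it. From here, finiteness will propagate to the full centre by a Noetherian argument, and then to $U^{[r]}(G)$ viewed over the larger central subring.

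More precisely, the first step is to observe that by the corollary describing the generators of $Z^{[r]}_p$, the subalgebra $Z^{[r]}_p$ is isomorphic to a polynomial ring $\bK[t_1,\ldots,t_n]$ in $n=\dim(\fg)$ variables, hence is a Noetherian, finitely generated $\bK$-algebra. Next, by the second corollary, $U^{[r]}(G)$ is free of finite rank $p^{(r+1)n}$ as a $Z^{[r]}_p$-module. Consequently $U^{[r]}(G)$ is Noetherian as a $Z^{[r]}_p$-module. Since $Z^{[r]}_p \subseteq Z^{[r]}(G)\subseteq U^{[r]}(G)$ (the first inclusion holding by Corollary~\ref{central}), the centre $Z^{[r]}(G)$ is a $Z^{[r]}_p$-submodule of the Noetherian module $U^{[r]}(G)$, and is therefore finitely generated as a $Z^{[r]}_p$-module.

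Finite generation of $Z^{[r]}(G)$ as a $\bK$-algebra then follows immediately: if $z_1,\ldots,z_m\in Z^{[r]}(G)$ generate it as a $Z^{[r]}_p$-module, then together with generators of $Z^{[r]}_p$ they generate $Z^{[r]}(G)$ as a $\bK$-algebra. For the second statement, the same generating set for $U^{[r]}(G)$ as a $Z^{[r]}_p$-module (the basis of Corollary~\ref{basis} truncated appropriately, as supplied by the preceding corollary) serves also as a generating set over the larger ring $Z^{[r]}(G)$.

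There is no real obstacle here; the work has already been done in Corollary~\ref{central} and the two corollaries following it. The only point to keep straight is that finite generation of $Z^{[r]}(G)$ as an algebra uses two separate finite generation inputs, namely finite generation of $Z^{[r]}_p$ as a $\bK$-algebra (from algebraic independence of the explicit generators) and finite generation of $Z^{[r]}(G)$ as a $Z^{[r]}_p$-module (from the Noetherian sandwich argument).
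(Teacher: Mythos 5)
Your proof is correct and essentially matches the paper's route: the paper defers to Section 3.4 of \cite{West} and later (in Section~\ref{s3.2}) invokes the Artin--Tate lemma for this sandwich $Z^{[r]}_p\subseteq Z^{[r]}(G)\subseteq U^{[r]}(G)$, while you use the equivalent and slightly more direct Noetherian-module argument (finite free over the affine Noetherian ring $Z^{[r]}_p$ implies Noetherian as a $Z^{[r]}_p$-module, so the central submodule is finitely generated, which in fact yields the stronger conclusion that $Z^{[r]}(G)$ is a finite $Z^{[r]}_p$-module). The key inputs and the overall structure are the same.
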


\begin{cor}\label{findim}
	Let $M$ be an irreducible $\Upr$-module. Then $M$ is finite-dimensional, of dimension less than or equal to $p^{(r+1)\dim(\fg)}$.
\end{cor}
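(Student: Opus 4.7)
The plan is to reduce $M$ to a cyclic module over a finite-dimensional quotient of $U^{[r]}(G)$ by showing that the central subalgebra $Z^{[r]}_p$ acts on $M$ through an algebra character, and then to use the rank bound from the previous corollary.

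First I would observe that $U^{[r]}(G)$ is a finitely generated $\bK$-algebra: it is generated by the finite-dimensional space $\Di^{+}_{p^{r+1}-1}(G)$. By Corollary~\ref{fingen}, $U^{[r]}(G)$ is finitely generated as a module over its (affine) centre, and therefore satisfies a polynomial identity. I would then invoke Quillen's lemma: for an irreducible module $M$ over a finitely generated $\bK$-algebra, the division ring $\End_{U^{[r]}(G)}(M)$ is algebraic over $\bK$; since $\bK$ is algebraically closed, $\End_{U^{[r]}(G)}(M)=\bK$. Hence every central element of $U^{[r]}(G)$ acts on $M$ by a scalar. In particular, $Z^{[r]}_p$ acts through a character $\chi:Z^{[r]}_p\to\bK$.

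Next, let $I_\chi$ denote the two-sided ideal of $U^{[r]}(G)$ generated by $\ker(\chi)$. Since $Z^{[r]}_p$ is central, $M$ descends to a module over the quotient $U^{[r]}(G)/I_\chi \cong U^{[r]}(G)\otimes_{Z^{[r]}_p}\bK_\chi$. By the preceding corollary, $U^{[r]}(G)$ is free over $Z^{[r]}_p$ with basis $\{x_1^{(a_1)}\cdots x_n^{(a_n)}\,\vert\,0\leq a_i<p^{r+1}\}$ of cardinality $p^{(r+1)\dim\fg}$, so
\[
\dim_\bK U^{[r]}(G)/I_\chi \;=\; p^{(r+1)\dim\fg}.
\]
Since $M$ is irreducible it is cyclic, so $\dim_\bK M \leq \dim_\bK U^{[r]}(G)/I_\chi = p^{(r+1)\dim\fg}$, giving the claim.

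The main obstacle is the second sentence of the first paragraph — proving that $Z^{[r]}_p$ acts on $M$ by a character. Simplicity of $M$ alone does not immediately force central elements to act as scalars; one needs some form of Schur-type statement, and I would expect to pass through Quillen's lemma (or equivalently, Kaplansky's theorem for PI rings together with the Artin–Tate lemma applied to $Z(U^{[r]}(G))\subset U^{[r]}(G)$). If one wanted to avoid Quillen's lemma, an alternative would be to argue directly with $M$ viewed as a finitely generated $Z^{[r]}_p$-module: using that for any maximal ideal $\fm\subset Z^{[r]}_p$ the subset $\fm M$ is a $U^{[r]}(G)$-submodule, simplicity forces $\fm M\in\{0,M\}$, and one then shows that some maximal ideal must annihilate $M$ (this is where the delicate point lies, and where the commutative-algebra of $Z^{[r]}_p$ needs to be used carefully).
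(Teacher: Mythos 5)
Your strategy is the right one and, as far as I can tell, matches what the paper is implicitly doing (it defers to \cite[Section 3.4]{West} for this block of corollaries): show that $Z^{[r]}_p$ acts on $M$ through a character $\chi$, pass to the finite-dimensional fibre $U^{[r]}(G)\otimes_{Z^{[r]}_p}\bK_\chi$, which by the preceding corollary has $\bK$-dimension exactly $p^{(r+1)\dim\fg}$, and then use that $M$ is cyclic. Your second paragraph is correct and complete given the first.

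The one flaw is in the supporting lemma you reach for. ``Quillen's lemma'' does \emph{not} say that $\End_{\Upr}(M)$ is algebraic over $\bK$ for every irreducible module over every finitely generated $\bK$-algebra; it requires a filtration with affine \emph{commutative} associated graded, and the statement you wrote is false at that level of generality. Your parenthetical alternative (Kaplansky's theorem for PI rings together with Artin--Tate) is the correct citation in this setting, so the gap is one of attribution rather than substance. In fact the ``delicate point'' you flag at the end is routine and needs none of this machinery. Since $M$ is irreducible it is cyclic over $\Upr$, and $\Upr$ is finitely generated (even free) as a $Z^{[r]}_p$-module, so $M$ is a finitely generated module over $A\coloneqq Z^{[r]}_p/\ann_{Z^{[r]}_p}(M)$. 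A nonzero finitely generated $A$-module has a maximal proper $A$-submodule, hence a simple quotient $A/\fm$ for some maximal ideal $\fm$ of $A$; then $\fm M\neq M$, but $\fm$ lies in the centre of $\Upr$, so $\fm M$ is a $\Upr$-submodule and irreducibility forces $\fm M=0$. Faithfulness of the $A$-action then gives $\fm=0$, so $A$ is a field; since $Z^{[r]}_p$ is a polynomial ring, $A$ is an affine $\bK$-algebra and Zariski's lemma yields $A=\bK$. This produces the character $\chi$ directly, with no appeal to $\End_{\Upr}(M)$, Quillen, or PI theory.
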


Similarly, the requirement in Section 5.1 of \cite{West} that $G$ be reductive can be removed. In particular, for an arbitrary affine algebraic group $G$ and $\chi\in(\fg^{*})^{(r)}$ we can define the algebra
$$U_\chi^{[r]}(G)\coloneqq \frac{U^{[r]}(G)}{\langle \delta^{\otimes p} - \delta^p-\chi(\delta)^p\,\vert\,\delta\in\Dipri\rangle}.$$ Recall here that $\chi$ extends to $\Dipri$ through the map $\Upsilon_{r,r}:U^{[r]}(G)\to U(\fg)^{(r)}$ defined in Section 4 in \cite{West} -- the reader should note that this map is obtained from the Frobenius map $\Di(G)\to\Di(G^{(r)})$. We saw earlier that all the properties of this map given in \cite{West} for reductive groups also hold for affine algebraic groups.  We then obtain the following corollaries.

\begin{cor}
	Every irreducible $U^{[r]}(G)$-module is a $U^{[r]}_\chi(G)$-module for some $\chi\in(\fg^{*})^{(r)}$.
\end{cor}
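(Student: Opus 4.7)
The plan is to exploit the polynomial structure of the central subalgebra $Z_p^{[r]}$ established above, together with Schur's lemma, to produce a character of $Z_p^{[r]}$ and identify it with an element of $(\fg^*)^{(r)}$.

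Let $M$ be an irreducible $U^{[r]}(G)$-module. By Corollary~\ref{findim} it is finite-dimensional, so Schur's lemma forces every central element of $U^{[r]}(G)$ to act on $M$ as a scalar. In particular, each of the algebraically independent generators $z_i := (x_i^{(p^r)})^{\otimes p} - (x_i^{(p^r)})^p$ of $Z_p^{[r]}$ acts as some $c_i \in \bK$.

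Since $\bK$ is algebraically closed of characteristic $p$, there are unique $a_i \in \bK$ with $a_i^p = c_i$. I would define $\chi \in (\fg^*)^{(r)}$ by $\chi(x_i) := a_i$ on the basis of $\fg$, extended $\bK$-linearly in the twisted scalar structure on $\fg^{(r)}$. Composing with the restriction of $\Upsilon_{r,r}$ to $\Dipri$ yields, by slight abuse of notation, a $\bK$-linear functional $\chi$ on $\Dipri$ that vanishes on $\Di^{+}_{p^r-1}(G)$ and takes value $a_i$ on $x_i^{(p^r)}$; in particular $\chi(x_i^{(p^r)})^p = c_i$ by construction.

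It remains to check that $\delta^{\otimes p} - \delta^p$, which by Corollary~\ref{central} is central and hence acts as some scalar $s(\delta)$ on $M$, satisfies $s(\delta) = \chi(\delta)^p$ for every $\delta \in \Dipri$. Both $s$ and $\delta \mapsto \chi(\delta)^p$ are $p$-homogeneous, both vanish on $\Di^{+}_{p^r-1}(G)$ (for $s$, because relation (i) in the definition of $U^{[r]}(G)$ forces $\mu^{\otimes p} = \mu^p$ whenever $\mu \in \Di^{+}_{p^r-1}(G)$), and both agree on the basis elements $x_i^{(p^r)}$ by construction. The remaining additivity step --- showing $s(\delta + \eta) = s(\delta) + s(\eta)$ --- is the main obstacle: it requires expanding $(\delta + \eta)^{\otimes p}$ in $U^{[r]}(G)$ and $(\delta + \eta)^p$ in $\Di(G)$ via the Jacobson $p$-th power identity for associative algebras of characteristic $p$, and then matching the resulting iterated-commutator corrections between the two expansions. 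This matching is governed by relation (ii) of $U^{[r]}(G)$, which forces iterated brackets in $U^{[r]}(G)$ to coincide with those in $\Di(G)$ up to the degrees that arise in the Jacobson expansion, so that the correction terms cancel in the difference $\delta^{\otimes p} - \delta^p$.
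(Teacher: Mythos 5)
Your overall strategy --- use Schur's lemma to get scalars from the central elements $\delta^{\otimes p}-\delta^{p}$, then show that these scalars come from a single $p$-semilinear functional --- is the right one, and your observation that additivity of $s$ reduces to Jacobson's $p$-th power identity together with relation (ii) is correct. The paper itself simply invokes Section 3.4 of \cite{West}, so the detailed verification is not reproduced here; your sketch is in the spirit of what that reference does. However, there is a genuine gap in how you pin down $s$. You define $\chi$ on $\Dipri$ by pulling back along $\Upsilon_{r,r}$, so by construction $\chi(\delta)^p$ vanishes on the \emph{entire kernel} of $\Upsilon_{r,r}\vert_{\Dipri}$. But you only argue that $s$ vanishes on $\Di^{+}_{p^r-1}(G)$. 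The kernel of $\Upsilon_{r,r}\vert_{\Dipri}$ is strictly larger: it contains every PBW basis element $x_1^{(a_1)}\cdots x_n^{(a_n)}$ with $\sum a_i=p^r$ that is not one of the $x_i^{(p^r)}$ (for instance $x_1^{(1)}x_2^{(p^r-1)}$). Such a $\delta$ lies in $\Dipri\setminus\Di^{+}_{p^r-1}(G)$, the iterated application of relation (i) that gave you $\mu^{\otimes p}=\mu^p$ for $\mu\in\Di^{+}_{p^r-1}(G)$ breaks down at the final step (since $p\cdot p^r \not< p^{r+1}$), and additivity does not reduce it to the cases you have handled because it is a single basis vector, not a sum. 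So the three properties you list --- $p$-homogeneity, vanishing on $\Di^{+}_{p^r-1}(G)$, and agreement on the $x_i^{(p^r)}$ --- do not determine a $p$-semilinear map on $\Dipri$, and you have not shown that $s=\chi^p$ on these extra basis vectors.

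To close the gap you would need the stronger fact, established in \cite{West} Section~3.4, that the map $\delta\mapsto\delta^{\otimes p}-\delta^p$ factors through $\Upsilon_{r,r}:\Dipri\twoheadrightarrow\fg$ (equivalently, vanishes on the full kernel of $\Upsilon_{r,r}$, not merely on $\Di^{+}_{p^r-1}(G)$). That is the real content hiding behind the corollary, and it is not a consequence of the Jacobson-identity additivity alone. Once that factorization is in hand, your construction of $\chi$ and the rest of your argument go through.
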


\begin{cor}\label{orbit}
	Given $\chi\in(\fg^{*})^{(r)}$ and $g\in G$, there is an isomorphism $U^{[r]}_\chi(G)\cong U^{[r]}_{g\cdot\chi}(G)$, where $G$ is acting on $(\fg^{*})^{(r)}$ through the coadjoint action pre-composed with the $r$-th Frobenius morphism.
\end{cor}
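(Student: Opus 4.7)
The plan is to realise the isomorphism as an induced algebra automorphism coming from the adjoint action of $G$ on itself. First I would observe that the conjugation action of $g\in G$ on $G$ induces, by functoriality and the definition of $\Di(G)$ as linear functionals on $\bK[G]$, a Hopf algebra automorphism $\Ad(g)$ of $\Di(G)$ which preserves each filtered piece $\Di_k(G)$ (because conjugation fixes the identity of $G$, hence preserves the augmentation ideal $I$ and its powers). In particular the restriction $\Ad(g)|_{\Di^+_{p^{r+1}-1}(G)}$ is a linear automorphism that respects the multiplication and commutator relations used to define $U^{[r]}(G)$, so the universal property (Proposition 3.1.1 in \cite{West}) supplies an algebra automorphism $\widetilde{\Ad}(g)$ of $U^{[r]}(G)$, functorial in $g$.

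Next I would descend this action to the quotients. For $\delta\in\Di^+_{p^r}(G)$ the image $\Ad(g)(\delta)$ again lies in $\Di^+_{p^r}(G)$, so
\[
\widetilde{\Ad}(g)\bigl(\delta^{\otimes p}-\delta^p-\chi(\delta)^p\bigr) \;=\; \Ad(g)(\delta)^{\otimes p}-\Ad(g)(\delta)^p-\chi(\delta)^p,
\]
since $\widetilde{\Ad}(g)$ is an algebra map fixing scalars. Writing $\delta'=\Ad(g)(\delta)$ and running $\delta$ over all of $\Di^+_{p^r}(G)$, this shows $\widetilde{\Ad}(g)$ sends the defining ideal of $U^{[r]}_\chi(G)$ onto the ideal generated by $(\delta')^{\otimes p}-(\delta')^p-\chi(\Ad(g^{-1})\delta')^p$ for $\delta'\in\Di^+_{p^r}(G)$, i.e.\ onto the defining ideal of $U^{[r]}_{\chi'}(G)$ where $\chi'(\delta')\coloneqq \chi(\Ad(g^{-1})\delta')$.

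The main technical step, and the one I expect to be the principal obstacle, is identifying $\chi'$ with $g\cdot\chi$ under the action described in the statement. The extension of $\chi\in(\fg^*)^{(r)}$ to a functional on $\Di^+_{p^r}(G)$ is obtained by restricting the Hopf algebra map $\Upsilon_{r,r}:U^{[r]}(G)\to U(\fg)^{(r)}$ to $\Di^+_{p^r}(G)$ (whose image lies in the primitive part $\fg\subseteq U(\fg)^{(r)}$) and then composing with $\chi$. Because $\Upsilon_{r,r}$ is built from the Frobenius map $\Di(G)\to\Di(G^{(r)})$, it intertwines the $\widetilde{\Ad}$-action on the source with the adjoint action of $F_r(g)$ on the target, where $F_r:G\to G$ is the $r$-th Frobenius morphism. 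Unpacking this intertwining relation gives
\[
\chi'(\delta') \;=\; \chi\bigl(\Ad(F_r(g))^{-1}\,\Upsilon_{r,r}(\delta')\bigr) \;=\; (g\cdot\chi)(\delta'),
\]
where the last equality is simply the definition of the coadjoint action of $G$ on $(\fg^*)^{(r)}$ pre-composed with $F_r$.

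Finally I would conclude that $\widetilde{\Ad}(g)$ descends to an algebra homomorphism $U^{[r]}_\chi(G)\to U^{[r]}_{g\cdot\chi}(G)$, and since $\widetilde{\Ad}(g^{-1})$ provides a two-sided inverse (by functoriality $\widetilde{\Ad}(g^{-1})\circ\widetilde{\Ad}(g)=\widetilde{\Ad}(e)=\Id$), this homomorphism is the desired isomorphism.
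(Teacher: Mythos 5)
Your argument is correct and matches the conjugation approach the paper implicitly invokes: Corollary~\ref{orbit} is stated without an explicit proof, deferring to the reductive case in \cite{West}, and your construction of $\widetilde{\Ad}(g)$ from the universal property together with the intertwining of $\Upsilon_{r,r}$ with conjugation is exactly that argument adapted to arbitrary affine $G$. The only slip is notational: since $G$ need not have an $\bF_p$-form here, the $r$-th Frobenius is a morphism $F^r\colon G\to G^{(r)}$ rather than $G\to G$, so the intertwining identity should read $\Upsilon_{r,r}\circ\Ad(g)=\Ad(F^r(g))\circ\Upsilon_{r,r}$ with $F^r(g)\in G^{(r)}$ acting on $U(\fg)^{(r)}$ --- this does not affect the conclusion.
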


Furthermore, it is a straightforward consequence of Corollary~\ref{basis} that $U^{[r]}_\chi(G)$ has basis
$$\{x_1^{(a_1)}x_2^{(a_2)}\ldots x_n^{(a_n)}\,\vert\;0\leq a_i<p^{r+1}\;\mbox{for all}\; 1\leq i\leq n\}.$$
Hence, $U^{[r]}_\chi(G)$ is a finite-dimensional algebra of dimension $p^{(r+1)\dim(\fg)}$.	

\section{Representation Theory of $U^{[r]}(G)$}
\label{s2}

\subsection{Steinberg decomposition}
\label{s2.1}

For the rest of this paper we assume that $G$ is a connected reductive algebraic group over $\bK$. We shall furthermore assume that the quotient group $X(T)/p^rX(T)$ has a system of representatives $X_r'(T)$ which lies inside $X_r(T)$. Recall that the definition of $X_r(T)$ is
$$X_r(T)\coloneqq\{\lambda\in X(T)\vert\,0\leq\langle \lambda,\alpha^{\nu}\rangle< p^r \;\mbox{for all}\;\alpha\in\Pi\}.$$
This assumption holds if, for example, $G$ is semisimple and simply-connected. The reader should consult \cite[II.3.16]{Jan3} to see how Steinberg's tensor product theorem works for reductive algebraic groups satisfying this assumption. In particular, this assumption guarantees that every irreducible $\Di(G_r)$-module extends to a $\Di(G_{r+1})$-module (and hence to a $U^{[r]}(G)$-module).

Observe that in this section our algebraic group $G$ has an $\bF_p$-form, and so we shall generally use the geometric Frobenius morphism rather than the standard Frobenius morphism. In particular, the homomorphisms $\Upsilon_{r,s}$ map from $U^{[r]}(G)$ to $U^{[r-s]}(G)$ without requiring a twist of the $\bK$-structure.

In \cite{West}, it is shown by two different methods that every irreducible $U^{[r]}(G)$-module $M$ is isomorphic as $U^{[r]}(G)$-modules (and hence $\Di(G_r)$-modules) to $P\otimes \Hom_{\tiny G_{r}}(P,M)$ for some unique (up to isomorphism) irreducible $P\in\Irr(\Di(G_{r}))$. The first method uses the fact that each irreducible $\Di(G_r)$-module $P$ can be extended to a $U^{[r]}(G)$-module, together with the Hopf algebra structure of $U^{[r]}(G)$, to equip $\Hom_{\tiny G_{r}}(P,M)$ with the structure of a $U(\fg)$-module and $P\otimes \Hom_{\tiny G_{r}}(P,M)$ with the structure of a left $U^{[r]}(G)$-module. The second method introduces the algebra $$E\coloneqq\End_{U^{[r]}(G)}(U^{[r]}(G)\otimes_{\tiny \Di(G_{r})} P)^{op},$$ and shows that $\Hom_{\tiny G_{r}}(P,M)$ has the structure of a left $E$-module. Theorem 7.1.3 in \cite{West} then gives a $U^{[r]}(G)$-module structure to $P\otimes \Hom_{\tiny G_{r}}(P,M)$, and Theorem 7.1.4 shows that it is compatible with the module structure on $M$.

In understanding the structure of $E$, the following lemma was proved in \cite{West} as Lemma 7.1.5. We repeat the lemma here, since we are now in a position to explain the isomorphism in more detail.

\begin{lemma}\label{End}
	Let $P\in\Irr(\Di(G_{r})$ and $E=\End_{U^{[r]}(G)}(U^{[r]}(G)\otimes_{\tiny \Di(G_{r})} P)^{op}$. Then $E\cong U(\fg)$.
\end{lemma}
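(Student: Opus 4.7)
The plan is to exploit the crossed-product description $U^{[r]}(G)\cong\Di(G_{r})\#_{\sigma} U(\fg)$ from Proposition~\ref{cleft}, together with the hypothesis that every irreducible $\Di(G_{r})$-module $P$ extends to a $U^{[r]}(G)$-module $\tilde{P}$ (guaranteed by the assumption on $X_r'(T)$). The isomorphism $E\cong U(\fg)$ should arise essentially as right multiplication on the ``complementary'' tensor factor $U(\fg)$ inside the induced module.

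\textbf{Step 1.} Using the normality of $\Di(G_{r})$ as a Hopf subalgebra of $U^{[r]}(G)$ and the cleaving map $\gamma$, I would first establish an isomorphism of $U^{[r]}(G)$-modules
$$U^{[r]}(G) \otimes_{\Di(G_{r})} P \;\xrightarrow{\sim}\; U(\fg) \otimes \tilde{P}, \qquad a \otimes p \mapsto \Upsilon_{r,r}(a_{(1)}) \otimes a_{(2)}\cdot \tilde{p},$$
where the right-hand side carries the diagonal $U^{[r]}(G)$-action via $\Delta$ followed by $\Upsilon_{r,r}\otimes\mathrm{id}$. The inverse uses $\gamma$ to lift elements of $U(\fg)$ back into $U^{[r]}(G)$; well-definedness on the balanced tensor is precisely the statement that $\Upsilon_{r,r}|_{\Di(G_{r})}=\varepsilon$ (which identifies $U(\fg)$ as the quotient Hopf algebra).

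\textbf{Step 2.} I would then define $\Phi:U(\fg)\to E$ by letting $\Phi(y)$ act on $U(\fg)\otimes\tilde{P}$ as $\xi\otimes\tilde{p}\mapsto \xi y\otimes\tilde{p}$, that is, right multiplication by $y$ on the first tensor factor. Right multiplication on a single tensor factor always commutes with a diagonal left action, so $\Phi(y)$ lies in $\End_{U^{[r]}(G)}(U^{[r]}(G)\otimes_{\Di(G_{r})}P)$; the ``op'' in the definition of $E$ converts the anti-homomorphism ``right multiplication by $y$'' into a genuine algebra homomorphism $\Phi:U(\fg)\to E$.

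\textbf{Step 3.} For injectivity, observe that $\Phi(y)(1\otimes\tilde{p})=y\otimes\tilde{p}$, so $\Phi(y)=0$ forces $y=0$. For surjectivity, I would apply Frobenius reciprocity:
$$E^{op} \;\cong\; \Hom_{U^{[r]}(G)}\!\left(U^{[r]}(G)\otimes_{\Di(G_{r})} P,\; U(\fg)\otimes\tilde{P}\right) \;\cong\; \Hom_{\Di(G_{r})}\!\left(P,\; U(\fg)\otimes\tilde{P}\right).$$
Because $\Di(G_{r})$ acts on $U(\fg)$ via $\varepsilon$ and on $\tilde{P}|_{\Di(G_{r})}=P$ in the usual way, the last space is $U(\fg)\otimes\End_{\Di(G_{r})}(P)\cong U(\fg)$ by Schur's lemma, and one checks this identification coincides with $\Phi$.

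\textbf{Main obstacle.} The hardest part is Step~1: verifying that the candidate map is well-defined on $\otimes_{\Di(G_{r})}$, intertwines the two $U^{[r]}(G)$-actions, and has the $\gamma$-constructed inverse. This is the classical Blattner--Montgomery--Schneider style decomposition for a normal Hopf subalgebra, but some care is needed since the cleaving $\gamma$ from Proposition~\ref{cleft} is only a comodule map (not an algebra map), so the cocycle $\sigma$ of the crossed product must be tracked carefully. Once the module isomorphism of Step~1 is in hand, Steps~2 and~3 are essentially formal.
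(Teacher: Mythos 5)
Your proposal is correct, and it proves the lemma by a more hands-on route than the paper does. The paper's ``proof'' of this lemma is essentially a citation to [West, Lemma 7.1.5], with the accompanying Remark unpacking the isomorphism through Schneider's general machinery for Hopf--Galois extensions: the smash product $\bK\#U(\fg)$, Theorem 3.6 of [Sch], and the canonical map $can:U^{[r]}(G)\otimes_D U^{[r]}(G)\to U^{[r]}(G)\otimes U(\fg)$ together with the explicit formula $can^{-1}(x\otimes\overline{y})=\sum xS(y_{(1)})\otimes y_{(2)}$. You instead derive the key input directly: the tensor identity $U^{[r]}(G)\otimes_D P\cong U(\fg)\otimes\tilde P$, the inverse of which (namely $\overline{a}\otimes \tilde p\mapsto a_{(1)}\otimes_D S(a_{(2)})\tilde p$) is well-defined precisely because $\Di(G_r)$ is a normal Hopf subalgebra and the chosen lift disappears modulo the balanced tensor --- so you do not even need $\gamma$ for well-definedness, only for a concrete formula. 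Once that module isomorphism is in hand, your $\Phi$ (right multiplication on the $U(\fg)$-factor) lands in $E$ by the usual diagonal-action argument, the op converts it to a genuine homomorphism, and the Frobenius reciprocity chain $E^{op}\cong\Hom_D(P,U(\fg)\otimes\tilde P)\cong U(\fg)$ (valid even though $U(\fg)$ is infinite-dimensional, because $P$ is finite-dimensional so $\Hom$ commutes with the direct sum) exhibits $\Phi$ as a bijection. Your route is more elementary and avoids invoking Schneider's theorem as a black box, at the cost of reproving a piece of it. The paper's route is shorter, and its explicit Schneider-style description of the isomorphism (via $J=J'S$ and $\gamma$) is what makes the later computations in Proposition~\ref{Decomp} and Lemma~\ref{pChars} tractable, since one can track exactly where central elements such as $\delta^{\otimes p}-\delta^p$ go; your $\Phi$ need not literally coincide with the paper's $J$, but the lemma only asks for some isomorphism, so this is immaterial for the statement itself.
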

%

\begin{rmk}\label{isom}
	We can describe this isomorphism a little more explicitly. The isomorphism $U(\fg)\cong \bK\# U(\fg)$ sends $z\in U(\fg)$ to $1\# z\in \bK\# U(\fg)$. We now need to consider the isomorphism $\bK\# U(\fg)\cong E$ from Schneider \cite{Sch}.
	
	Note that the stability of the $\Di(G_{r})$-module $P$ comes immediately from the fact that $P$ can be extended to a $U^{[r]}(G)$-module, by Remark 3.2.3 in \cite{Sch}. Let $q:U^{[r]}(G)\otimes_{D} P\to P$ be the $\Di(G_{r})$-linear map defining this $U^{[r]}(G)$-module structure, denoting the algebra $\Di(G_{r})$ by $D$ here and throughout this paper. By Theorem 3.6 in \cite{Sch}, there is a right $U(\fg)$-collinear map $J':U(\fg)\to E$ given by
	$$J'(h)(1\otimes z)\coloneqq\sum r_i(h)\otimes q(l_i(h)\otimes z),$$
	where $h\in U(\fg)$, $z\in P$, and $r_i(h),l_i(h)\in U^{[r]}(G)$ are such that $\sum r_i(h)\otimes_D l_i(h)$ is the inverse image of $1\otimes h$ under the canonical isomorphism $$can:U^{[r]}(G)\otimes_D U^{[r]}(G)\to U^{[r]}(G)\otimes U(\fg).$$
	
	By Remark 1.1(4) in \cite{Sch}, the inverse of the map $can$ sends $x\otimes\overline{y}\to \sum xS(y_{(1)})\otimes y_{(2)}$, where $\overline{y}$ is the image of $y\in U^{[r]}(G)$ under the projection $\Upsilon_{r,r}:U^{[r]}(G)\twoheadrightarrow U(\fg)$.
	
	Now fix a $U(\fg)$-comodule map $\gamma:U(\fg)\to U^{[r]}(G)$ such that $\Upsilon_{r,r}\circ\gamma=\Id_{U(\fg)}$ and $S\circ\gamma=\gamma\circ S$, where $\Upsilon_{r,r}:U^{[r]}(G)\twoheadrightarrow U(\fg)$ is as defined in Section 4 in \cite{West}. The proof of Proposition~\ref{cleft} illustrates a way to do this. We hence describe the isomorphism $J\coloneqq J'S:U(\fg)\to E$ as follows:
	$$x\mapsto \left(1\otimes_D z\mapsto \sum \gamma(x)_{(1)}\otimes_D q(S(\gamma(x)_{(2)})\otimes z)\right)$$
	for $x\in U(\fg)$ and $z\in P$.
\end{rmk}

This remark in fact shows that the two methods from \cite{West}, discussed above, are deeply related. In particular, if we compose the isomorphism $U(\fg)\xrightarrow{\sim} E$ with the $E$-action on $\Hom_{\tiny G_{r}}(P,M)$ from the second method then we recover the $U(\fg)$-action on $\Hom_{\tiny G_{r}}(P,M)$ used in the first method. In this paper we prefer to work with the second method, since the actions of $E=U(\fg)$ and $U^{[r]}(G)$ are easier to compute with in this case. This shall be most beneficial in Lemma~\ref{pChars} and in Section~\ref{s3}, where the actions of central elements in $U^{[r]}(G)$ and $U(\fg)$ are explored.

We define $\Gamma_{P}$ to be the category of irreducible left $U^{[r]}(G)$-modules which decompose as $\Di(G_{r})$-modules into a direct sum of copies of ($\Di(G_r)$-modules isomorphic to) $P$. This is a full subcategory of the category of irreducible left $U^{[r]}(G)$-modules. Furthermore, set $\modu(U(\fg))$ to be the category of finite-dimensional left $U(\fg)$-modules.

We shall examine the functor
$$\Psi_{P}:\Gamma_{P}\to\modu(E)=\modu(U(\fg))$$ which sends $M\in\Gamma_{P}$ to $\Hom_{\tiny G_{r}}(P,M)$ . 

The following theorem should also be compared with Theorem 3.1 in \cite{Wit}.

\begin{theorem}\label{equiv}
	There is an equivalence of categories between $\Gamma_P$ and $\Irr(E)$. In particular, this equivalence is obtained from the maps
	$$\Psi_{P}:\Gamma_{P}\to \Irr(E),\qquad \Psi_{P}(M)=\Hom_{\tiny G_{r}}(P,M);$$
	$$\Phi_{P}:\Irr(E)\to \Gamma_{P},\qquad \Phi_P(N)=P\otimes_{\bK} N.$$
\end{theorem}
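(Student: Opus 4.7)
The plan is to execute a Clifford-theoretic argument in the style of Schneider-Witherspoon, using the cleftness established in Proposition~\ref{cleft} as the key structural input. The crucial fact is that, since $P$ extends to a $U^{[r]}(G)$-module, there is an isomorphism $U^{[r]}(G) \otimes_{D} P \cong P \otimes E$ (where $D = \Di(G_{r})$) of left $U^{[r]}(G)$-modules and right $E$-modules, with $P$ carrying its chosen extended structure and $E \cong U(\fg)$ acting on itself regularly on the right. Under Lemma~\ref{End} this is essentially the content of Remark~\ref{isom}.

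First I would check that the two functors are well-defined. For $M \in \Gamma_P$, the space $\Hom_{G_{r}}(P, M)$ acquires a left $E$-module structure by precomposition, via the adjunction $\Hom_{G_{r}}(P, M) \cong \Hom_{U^{[r]}(G)}(U^{[r]}(G) \otimes_D P, M)$, and it is finite-dimensional by Corollary~\ref{findim}. For $N \in \Irr(E)$, the canonical isomorphism $P \otimes N \cong (U^{[r]}(G) \otimes_D P) \otimes_E N$ endows $\Phi_P(N)$ with a $U^{[r]}(G)$-module structure whose restriction to $D$ is manifestly a direct sum of copies of $P$, placing $\Phi_P(N)$ inside the ambient category of $\Gamma_P$.

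Next I would establish the two natural isomorphisms $\Psi_P \Phi_P \cong \Id$ and $\Phi_P \Psi_P \cong \Id$. The former follows because $\Hom_{G_{r}}(P, P \otimes N) \cong \End_{G_{r}}(P) \otimes N \cong \bK \otimes N \cong N$, using absolute irreducibility of $P$. The latter is precisely the decomposition $M \cong P \otimes \Hom_{G_{r}}(P, M)$ recalled in the paragraph preceding Lemma~\ref{End}. Compatibility of these isomorphisms with the relevant actions reduces to the identification of the $E$-action on $\Hom_{G_{r}}(P, M)$ (coming from the Morita-type viewpoint of the second method of \cite{West}) with the $U(\fg)$-action from the first method of \cite{West}; this is exactly what Remark~\ref{isom} was prepared to supply.

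Irreducibility on either side is then formal. If $M \in \Gamma_P$ is irreducible and $N' \subseteq \Psi_P(M)$ is a nonzero $E$-submodule, then $P \otimes N'$ is a nonzero $U^{[r]}(G)$-submodule of $\Phi_P \Psi_P(M) \cong M$, so $P \otimes N' = M$ and hence $N' = \Psi_P(M)$; the symmetric argument handles $\Phi_P$. The main obstacle is the bookkeeping needed to match the two parallel descriptions of the $E$-action and to verify that the module structures transported across the cleft isomorphism coincide with the intrinsic ones; but this matching is precisely what Proposition~\ref{cleft} and Remark~\ref{isom} have been set up to deliver, so the remaining work is essentially a careful diagram chase rather than a new construction.
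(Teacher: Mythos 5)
Your proposal is correct and follows essentially the same approach as the paper's proof: both construct the two functors, establish $\Phi_P\Psi_P\cong\Id$ via the decomposition $M\cong P\otimes\Hom_{G_r}(P,M)$ imported from \cite{West}, establish $\Psi_P\Phi_P\cong\Id$ via $\Hom_{G_r}(P,P\otimes N)\cong N$, and prove irreducibility on each side by the same submodule-transport argument. The only difference is that the paper carries out the step you deferred — it writes the map $\sigma_N(n)(z)=z\otimes n$ explicitly, verifies its $E$-linearity by hand, and uses a dimension count (via $\Phi_P(N)\cong P^{\oplus\dim N}$ as $D$-modules and $\Hom_D(P,P)=\bK$) to confirm it is an isomorphism, rather than appealing to Remark~\ref{isom} and the cleft structure to match the two $E$-actions.
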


\begin{proof}
	We maintain the convention $D=\Di(G_{r})$ to make formulas clearer.
	
	If $M\in\Gamma_{P}$, then Lemma 7.1.3 and Theorem 7.1.4 in \cite{West} show that $$\Psi_P(M)=\Hom_{\tiny D}(P,M)=\Hom_{\tiny U^{[r]}(G)}(U^{[r]}(G)\otimes_{\tiny D}P,M)$$ is a left $E$-module; that $P\otimes_{\bK}\Psi_P(M)$ is a left $U^{[r]}(G)$-module; that $P\otimes_{\bK}\Psi_P(M)$ is isomorphic to $(U^{[r]}(G)\otimes_{\tiny D}P)\otimes_{E}\Psi_P(M)$ as $U^{[r]}(G)$-modules; and that $$\eta_M:(U^{[r]}(G)\otimes_{\tiny D}P)\otimes_{E}\Psi_P(M)\to M, \qquad \eta_M(a\otimes_D z \otimes_E \phi)=\phi(a\otimes_D z)$$ is an isomorphism of $U^{[r]}(G)$-modules.
	
	Note that $\Psi_P(M)$ is an irreducible $E$-module, since if $\Psi_P(M)$ contains a proper non-trivial submodule $U$ then $$P\otimes_{\bK} U\cong (U^{[r]}(G)\otimes_{\tiny D}P)\otimes_{E} U$$ is a proper non-trivial $U^{[r]}(G)$-submodule of the irreducible $U^{[r]}(G)$-module $$M\cong (U^{[r]}(G)\otimes_{\tiny D}P)\otimes_{E} \Psi_P(M)\cong P\otimes_{\bK} \Psi_{P}(M).$$
	
	Now, suppose $N$ is an irreducible left $E$-module. It was proved in \cite[Lemma 7.1.3]{West} that 
	$$\Phi_P(N)\coloneqq P\otimes_{\bK} N\cong (U^{[r]}(G)\otimes_D P)\otimes_E N$$ is a left $U^{[r]}(G)$-module, and furthermore that the structure is such that $\Phi_P(N)$ is a direct sum of copies of $P$ as a $\Di(G_{r})$-module.
	
	We now wish to show that $\Hom_{D}(P,\Phi_P(N))\cong N$ as left $E$-modules. Define $$\sigma_N:N\to \Hom_{D}(P,\Phi_P(N)) \quad \mbox{by}\quad\sigma_N(n)(z)=z\otimes n\in  P\otimes_{\bK} N.$$ Since $$\Hom_{D}(P,\Phi_P(N))\cong \Hom_{U^{[r]}(G)}(U^{[r]}(G)\otimes_{D} P, \Phi_{P}(N))$$ as left $E$-modules and $$P\otimes_{\bK} N\cong (U^{[r]}(G)\otimes_D P)\otimes_E N$$ as left $U^{[r]}(G)$-modules, we can also write this map as $$\sigma_N:N\to \Hom_{U^{[r]}(G)}(U^{[r]}(G)\otimes_D P, (U^{[r]}(G)\otimes_D P)\otimes_E N),\qquad \sigma_N(n)(a\otimes_D z)=(a\otimes_D z)\otimes_E n$$
	for $n\in N$, $z\in P$ and $a\in U^{[r]}(G)$.
	
	It is easy to see that $\sigma_N(n)$ is a $U^{[r]}(G)$-module homomorphism from $U^{[r]}(G)\otimes_D P$ to $(U^{[r]}(G)\otimes_D P)\otimes_E N$, and also that $\sigma_N$ is a linear map. We show that $\sigma_N$ is $E$-linear. It is enough to show that for $f\in E$, $n\in N$, $z\in P$ and $a\in U^{[r]}(G)$, we have that $$(f\cdot\sigma_N(n))(a\otimes_D z)=\sigma_N(f\cdot n)(a\otimes_D z).$$ Note that $$(f\cdot\sigma_N(n))(a\otimes_D z)=\sigma_N(n)(f(a\otimes_D z))=f(a\otimes_D z)\otimes_E n,$$ while $$\sigma_N(f\cdot n)(a\otimes_D z)=(a\otimes_D z)\otimes_E (f\cdot n).$$ Since the right $E$-module structure on $U^{[r]}(G)\otimes_D P$ comes from the evaluation map, the result holds from the definition of the tensor product.
	
	Hence, $\sigma_N$ is an $E$-module homomorphism. It is clear that $\sigma_N$ is injective from the description $\sigma_N(n)(z)=z\otimes n\in P\otimes_{\bK} N$ for $n\in N$, $z\in P$. Furthermore, by above, $$\Phi_P(N)\cong\bigoplus_{i=1}^{k} P$$ as $\Di(G_{r})$-modules. Now, $k=\dim(N)$ as $\dim(\Phi_P(N))=\dim(P)\dim(N)$ and $\dim(\bigoplus_{i=1}^{k} P)=k\dim(P)$. Hence, $$\Hom_D(P,\Phi_P(N))\cong \Hom_D(P,\bigoplus_{i=1}^{k} P)=\bK^{k},$$ since $\Hom_D(P,P)=\bK$. Thus, $\dim(N)=k=\dim(\Hom_D(P,\Phi_P(N)))$. Together with the injectivity, this proves that $\sigma_N$ is an isomorphism of $E$-modules.
	
	Furthermore, $\Phi_P(N)$ is an irreducible $U^{[r]}(G)$-module since if it contains a proper non-trivial submodule $L$ then $$\Hom_{\tiny D}(P,L)\cong\Hom_{\tiny U^{[r]}(G)}(U^{[r]}(G)\otimes_D P,L)$$ is a proper non-trivial $E$-submodule of $$N\cong \Hom_{U^{[r]}(G)}(U^{[r]}(G)\otimes_{D} P, \Phi_{P}(N)) \cong \Hom_{D}(P,\Phi_P(N)),$$ contradicting the irreducibility of $N$.
	
	In conclusion, we have shown that the maps $\Psi_P$ and $\Phi_P$ are well-defined; that for any irreducible $U^{[r]}(G)$-module $M$, $\Phi_P(\Psi_P(M))\cong M$ as $U^{[r]}(G)$-modules; and that for any irreducible $E$-module $N$, $\Psi_P(\Phi_P(N))\cong N$ as $E$-modules. It is then straightforward to see that this bijection is in fact an equivalence of categories.
\end{proof}
\begin{rmk}\label{NotIrr}
	This proof in fact shows that for any $E$-module $N$, not necessarily irreducible, it is true that $N\cong \Hom_{G_{r}}(P,P\otimes_{\bK}N)=\Hom_{G_{r}}(P,(U^{[r]}(G)\otimes_D P)\otimes_E N)$ as $E$-modules.
\end{rmk}
%
%
%

For each $\bK$-algebra $R$ we consider in this section, we denote by $\underline{\Irr}(R)$ the set of isomorphism classes of irreducible $R$-modules.

\begin{cor}
	There is a bijection
	$$\Psi:\underline{\Irr}(U^{[r]}(G))\to \underline{\Irr}(\Di(G_{r}))\times \underline{\Irr}(U(\fg))$$
	which sends $M$ to $(P,\Hom_{\tiny G_{r}}(P,M))$, where $P$ is the unique (up to isomorphism) irreducible $\Di(G_{r})$-submodule of $M$. The reverse map sends $(P,N)$ to the $U^{[r]}(G)$-module $(U^{[r]}(G)\otimes_D P)\otimes_{U(\fg)} N=P\otimes_{\bK} N$.
\end{cor}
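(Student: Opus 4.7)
The plan is to assemble the bijection directly from Theorem~\ref{equiv} together with the existence-and-uniqueness statement recalled earlier in this subsection. Namely, the discussion preceding Lemma~\ref{End} established (via \cite{West}, using the assumption that every irreducible $\Di(G_r)$-module extends to a $U^{[r]}(G)$-module) that for each irreducible $U^{[r]}(G)$-module $M$ there exists a unique irreducible $P\in\Irr(\Di(G_r))$ for which $M$ is a direct sum of copies of $P$ as a $\Di(G_r)$-module; equivalently, $M\in\Gamma_P$ for a unique $P$. Thus, $\Irr(U^{[r]}(G))$ decomposes as a disjoint union
\[
\Irr(U^{[r]}(G))\;=\;\bigsqcup_{P\in\Irr(\Di(G_r))}\Gamma_P.
\]

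Next, I would apply Theorem~\ref{equiv} to each $P$: it provides an equivalence (in particular a bijection on isomorphism classes) between $\Gamma_P$ and $\Irr(E)$, and by Lemma~\ref{End} we have $E\cong U(\fg)$, so $\Irr(E)=\Irr(U(\fg))$. Combining this $P$-by-$P$ bijection with the disjoint-union decomposition above yields the asserted bijection $\Psi$, sending $M\in\Gamma_P$ to $(P,\Psi_P(M))=(P,\Hom_{G_r}(P,M))$. For the inverse, the functor $\Phi_P$ of Theorem~\ref{equiv} sends $N\in\Irr(U(\fg))$ to $\Phi_P(N)=P\otimes_{\bK}N\cong(U^{[r]}(G)\otimes_D P)\otimes_E N$ as $U^{[r]}(G)$-modules, which is exactly the reverse map in the statement; that $\Phi_P\circ\Psi_P\cong\mathrm{id}$ and $\Psi_P\circ\Phi_P\cong\mathrm{id}$ are already part of Theorem~\ref{equiv}.

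There is essentially no obstacle here: the only thing to check carefully is the well-definedness of the forward map, namely that the $P$ appearing in $(P,\Hom_{G_r}(P,M))$ really is the unique irreducible $\Di(G_r)$-submodule of $M$ (rather than just the unique $P$ such that $M\in\Gamma_P$). This follows immediately from the fact that $M|_{\Di(G_r)}$ is a direct sum of copies of $P$, so $P$ is the only irreducible $\Di(G_r)$-module occurring as a submodule. With that observation, the corollary is a formal consequence of Theorem~\ref{equiv} and Lemma~\ref{End}.
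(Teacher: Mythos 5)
Your proposal is correct and matches the paper's (implicit) argument: the corollary appears immediately after Theorem~\ref{equiv} with no separate proof, precisely because it is obtained by partitioning $\Irr(U^{[r]}(G))$ into the subcategories $\Gamma_P$ and invoking the $P$-by-$P$ equivalence from the theorem together with Lemma~\ref{End}. Your additional check that the $P$ indexing $\Gamma_P$ coincides with the unique irreducible $\Di(G_r)$-submodule of $M$ is a reasonable point to make explicit, and it is handled exactly as you describe.
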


We are now in a position to give the deferred proof of Proposition 7.1.7 from \cite{West}.

\begin{prop}\label{Decomp}
	Suppose that $G$ is a reductive algebraic group over an algebraically closed field $\bK$ of positive characteristic $p$, and let $\chi\in\fg^{*}$. Let $M$ be an irreducible $U^{[r]}_\chi(G)$-module and $P$ an irreducible $\Di(G_{r})$-module such that $M\cong P\otimes\Hom_{\tiny \Di(G_{r})}(P,M)$ as $\Di(G_{r})$-modules. Then $\Hom_{\tiny \Di(G_{r})}(P,M)$ is an irreducible $U_\chi(\fg)$-module.
\end{prop}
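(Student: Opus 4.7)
The plan is to use Theorem~\ref{equiv} to obtain irreducibility of $N:=\Hom_{G_r}(P,M)$ as a $U(\fg)$-module, and separately to check that the $U(\fg)$-action factors through $U_\chi(\fg)$. For the first part, observe that $U^{[r]}_\chi(G)$ is a quotient of $U^{[r]}(G)$, so every $U^{[r]}(G)$-submodule of $M$ is automatically a $U^{[r]}_\chi(G)$-submodule. Hence $M$ is irreducible as a $U^{[r]}(G)$-module, and Theorem~\ref{equiv} together with Lemma~\ref{End} supplies the desired irreducibility of $N$ over $U(\fg)$.

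For the second part, $N$ is finite-dimensional and irreducible and therefore has a unique $p$-character $\chi'\in\fg^{*}$; the task is to show $\chi'=\chi$. For each basis element $x_i$ of $\fg$, the central element $z_i:=(x_i^{(p^r)})^{\otimes p}-(x_i^{(p^r)})^{p}\in Z^{[r]}_p$ from Corollary~\ref{central} acts on $M$ as the scalar $\chi(x_i)^p$ by the definition of $U^{[r]}_\chi(G)$. I would then transport this action through the $U^{[r]}(G)$-module isomorphism $P\otimes_\bK N\xrightarrow{\sim}M$ of Theorem~\ref{equiv} and identify the resulting scalar with $\chi'(x_i)^p$. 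Using the cleft section $\gamma(x_i)=x_i^{(p^r)}$ of Proposition~\ref{cleft}, the $U(\fg)\cong E$ description of Remark~\ref{isom}, and the Hopf algebra antipode identity $\sum_{j=0}^{p^r}S(x_i^{(j)})x_i^{(p^r-j)}=\epsilon(x_i^{(p^r)})=0$, one checks that $x_i^{(p^r)}\in U^{[r]}(G)$ acts on $P\otimes_\bK N$ as the commuting sum $\rho_P(x_i^{(p^r)})\otimes\Id_N-\Id_P\otimes x_i$, where $\rho_P$ is the fixed extension of the $\Di(G_r)$-action on $P$ to a $\Di(G_{r+1})=U^{[r]}_0(G)$-module structure. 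Raising to the $p$-th power in characteristic $p$ (Freshman's dream for commuting operators) then gives the action of $(x_i^{(p^r)})^{\otimes p}$ as $\rho_P((x_i^{(p^r)})^p)\otimes\Id_N-\Id_P\otimes x_i^p$ (using $\rho_P(z_i)=0$), and a parallel analysis of the action of $(x_i^{(p^r)})^p$ via its coproduct in $\Di(G)$ produces the companion expression $\rho_P((x_i^{(p^r)})^p)\otimes\Id_N-\Id_P\otimes x_i^{[p]}$. Subtracting and keeping careful track of signs, $z_i$ acts on $P\otimes_\bK N$ purely through the $N$-factor as a scalar multiple of the action of $x_i^p-x_i^{[p]}$, namely $\chi'(x_i)^p$; matching with $\chi(x_i)^p$ yields $\chi'=\chi$, as required.

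The technical heart of the argument is establishing the formula for the action of the $\Di(G)$-power $(x_i^{(p^r)})^p$, as distinct from the $U^{[r]}(G)$-power $(x_i^{(p^r)})^{\otimes p}$. While the latter unwinds cleanly by iterating the commuting-sum operator above, the former sits in $\Di^{+}_{p^{r+1}-1}(G)\hookrightarrow U^{[r]}(G)$ as a specific combination of PBW basis elements from Corollary~\ref{basis}, mixing contributions from the $\Di(G_r)$ and $\gamma(U(\fg)^{(r)})$ summands of the crossed product decomposition $U^{[r]}(G)\cong\Di(G_r)\#_\sigma U(\fg)^{(r)}$. Tracking this decomposition, and verifying that the $\Di(G_r)$-valued contributions absorb into the central scalar because they act on $P$ via the $\Di(G_{r+1})$-extension through which $z_i$ itself acts trivially, is the delicate step; it ultimately rests on the compatibility between the divided-power structure of $\Di(G)$ and the $p$-operation on $\fg$, encoded in the Hopf algebra surjection $\Upsilon_{r,r}\colon U^{[r]}(G)\to U(\fg)$.
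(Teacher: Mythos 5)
Your overall plan is sound and is in the same spirit as the paper: irreducibility of $N=\Hom_{G_r}(P,M)$ as a $U(\fg)$-module follows from Theorem~\ref{equiv}, and the $p$-character is identified by transporting the action of the central elements $\delta^{\otimes p}-\delta^p$ through the Steinberg decomposition. The first part is fine. However, the second part contains a concrete sign error, and that error is not benign: followed through, it would prove $\chi'=-\chi$ rather than $\chi'=\chi$.

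The claimed formula that $x_i^{(p^r)}$ acts on $P\otimes_{\bK}N$ as $\rho_P(x_i^{(p^r)})\otimes\Id_N - \Id_P\otimes x_i$ should have a plus sign. Under the identification $P\otimes_\bK N\cong(U^{[r]}(G)\otimes_D P)\otimes_E N$ of Theorem~\ref{equiv}, one computes using exactly your ingredients (the cleft section $\gamma(x_i)=x_i^{(p^r)}$, the map $J=J'S$ from Remark~\ref{isom}, and the antipode identity $\sum_j S(x_i^{(j)})x_i^{(p^r-j)}=0$) that
$J(x_i)(1\otimes_D z)=x_i^{(p^r)}\otimes_D z - 1\otimes_D\rho_P(x_i^{(p^r)})z$,
so
$x_i^{(p^r)}\otimes_D z = (1\otimes_D z)\cdot x_i + 1\otimes_D\rho_P(x_i^{(p^r)})z$,
and therefore $x_i^{(p^r)}\cdot(z\otimes n)=\rho_P(x_i^{(p^r)})z\otimes n + z\otimes x_i n$. (One also sees this more directly from the crossed-product description: the coproduct $\Delta(x_i^{(p^r)})=\sum_j x_i^{(j)}\otimes x_i^{(p^r-j)}$ with $\Upsilon_{r,r}(x_i^{(j)})=0$ for $0<j<p^r$ leaves only the $j=0$ and $j=p^r$ terms.) With your minus sign, after raising to the $p$-th power and subtracting the companion expression for $(x_i^{(p^r)})^p$, the element $z_i$ acts through the $N$-factor as $-(x_i^p-x_i^{[p]})$, not $+(x_i^p-x_i^{[p]})$; matching $-\chi'(x_i)^p=\chi(x_i)^p$ gives the wrong answer. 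Your final sentence asserts the correct conclusion, but it is not what your displayed formulas actually produce, so the argument as written has a genuine gap.

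Beyond the sign, the route you chose is more circuitous than the paper's. The paper avoids taking $p$-th powers of operators and avoids the ``technical heart'' you flag at the end: it works directly with the $E$-module structure on $\Hom_D(P,M)$, observes that $\rho(\delta^{\otimes p}-\delta^p)\in E$ acts there by $\chi(\delta)^p$ because $M$ is a $U^{[r]}_\chi(G)$-module, and computes $J(\ve_\alpha^p)$ and $J(\vh_t^p-\vh_t)$ from the coproduct of $(\ve_\alpha^{(p^r)})^{\otimes p}$ (resp.\ $\tbinom{\vh_t}{p^r}^{\otimes p}-\tbinom{\vh_t}{p^r}$). The crucial simplifications there are that $(\ve_\alpha^{(i)})^{\otimes p}=0$ in $U^{[r]}(G)$ for $0<i<p^r$, and that $q((\ve_\alpha^{(p^r)})^{\otimes p}\otimes z)=0$ because $P$'s $U^{[r]}(G)$-structure factors through $\Di(G_{r+1})$, where $(\ve_\alpha^{(p^r)})^p=0$. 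If you fix your sign and fully spell out the claimed coproduct computation for $(x_i^{(p^r)})^p$ inside $\Di(G)$, your approach would indeed recover the result, but as submitted it is both incorrect in a way that matters and incomplete at the step you yourself identify as the technical core.
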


\begin{proof}
	All that remains is to show that for $x\in\fg$, $x^p-x^{[p]}$ acts on $\Hom_D(P,M)$ as $\chi(x)^{p}$. Given $\delta\in \Di_{p^r}^{+}(G)$, we know that $\delta^{\otimes p}-\delta^p$ is central in $\Upr$. Hence, the map $$\rho(\delta^{\otimes p}-\delta^p):\Upr\otimes_{D}P\to \Upr\otimes_{D}P$$ given by left multiplication by $\delta^{\otimes p}-\delta^p$ is a $\Upr$-module endomorphism of $\Upr\otimes_{D}P$, and so lies inside $E$. However, as we know that $M$ is a $U^{[r]}_\chi(G)$-module, $\rho(\delta^{\otimes p}-\delta^p)\in E$ acts on $\Hom_{\tiny D}(P,M)$ as multiplication by $\chi(\delta)^p$.
	
	Hence, to show that $\Hom_{\tiny D}(P,M)$ is a $U_\chi(\fg)$-module, we just need that, for $\alpha\in\Phi$, $\ve_{\alpha}^p$ maps to $\rho((\ve_{\alpha}^{(p^r)})^{\otimes p})$ and, for $1\leq t\leq d$, $\vh_{t}^p-\vh_{t}$ maps to $\rho( \binom{\vh_{t}}{p^r}^{\otimes p}-{\binom{\vh_{t}}{p^r}})$ under the isomorphism $U(\fg)\cong E$. 
	
	This isomorphism was described in Remark~\ref{isom}. In particular, we know that $\ve_\alpha^p=\overline{(\ve_{\alpha}^{(p^r)})^{\otimes p}}$ and $\vh_{t}^p-\vh_{t}=\overline{\binom{\vh_{t}}{p^r}^{\otimes p}-{\binom{\vh_{t}}{p^r}}}$ for $\alpha\in \Phi$ and $1\leq t\leq d$.
	
	Observe that $$\Delta((\ve_{\alpha}^{(p^r)})^{\otimes p})=\Delta(\ve_{\alpha}^{(p^r)})^{\otimes p}=\sum_{i=0}^{p^r}(\ve_\alpha^{(i)})^{\otimes p}\otimes(\ve_\alpha^{(p^r-i)})^{\otimes p}=(\ve_{\alpha}^{(p^r)})^{\otimes p}\otimes 1 + 1\otimes (\ve_{\alpha}^{(p^r)})^{\otimes p},$$
	since $(\ve_\alpha^{(i)})^{\otimes p}=0$ for all $0<i<p^r$, while
	\begin{align*}
		\Delta(\binom{\vh_{t}}{p^r}^{\otimes p}-{\binom{\vh_{t}}{p^r}})=\Delta(\binom{\vh_{t}}{p^r})^{\otimes p} - \Delta(\binom{\vh_{t}}{p^r})\\ =
		\sum_{i=0}^{p^r}\binom{\vh_{t}}{i}^{\otimes p}\otimes \binom{\vh_{t}}{p^r-i}^{\otimes p} - \sum_{i=0}^{p^r}\binom{\vh_{t}}{i}\otimes \binom{\vh_{t}}{p^r-i}\\
		=(\binom{\vh_{t}}{p^r}^{\otimes p}-{\binom{\vh_{t}}{p^r}})\otimes 1 + 1\otimes(\binom{\vh_{t}}{p^r}^{\otimes p}-{\binom{\vh_{t}}{p^r}})
	\end{align*}
	since $\binom{\vh_{t}}{i}^{\otimes p}={\binom{\vh_{t}}{i}}$ for all $0<i<p^r$.
	
	Hence, $J'(\ve_{\alpha}^p)(1\otimes z)=1\otimes q((\ve_{\alpha}^{(p^r)})^{\otimes p}\otimes z) - (\ve_{\alpha}^{(p^r)})^{\otimes p}\otimes q(1\otimes z)$. However, the $U^{[r]}(G)$-module structure on $P$ comes through the map $U^{[r]}(G)\twoheadrightarrow \Di(G_{r+1})$, so $q((\ve_{\alpha}^{(p^r)})^{\otimes p}\otimes z)=0$. Thus, $J'(\ve_{\alpha}^p)(1\otimes z)=-(\ve_{\alpha}^{(p^r)})^{\otimes p}\otimes z$.
	
	Similarly, $J'(\vh_{t}^p-\vh_{t})(1\otimes z)=-(\binom{\vh_{t}}{p^r}^{\otimes p}-{\binom{\vh_{t}}{p^r}})\otimes z$. 
	
	By Remark 3.8 in \cite{Sch}, the algebra homomorphism $J:U(\fg)\to E$ is defined as $J=J'S$. Hence, we conclude that $J(\ve_{\alpha}^p)=\rho((\ve_{\alpha}^{(p^r)})^{\otimes p})$ for $\alpha\in\Phi$, and $J(\vh_{t}^p-\vh_{t})=\rho( \binom{\vh_{t}}{p^r}^{\otimes p}-{\binom{\vh_{t}}{p^r}})$ for $1\leq t\leq d$ (using for the latter that $\binom{\vh_{t}}{i}^{\otimes p}=\binom{\vh_{t}}{i}$ for $i<p^r$). The result follows.
\end{proof}

\begin{cor}\label{prem}
	Suppose that $G$ is a connected reductive algebraic group over an algebraically closed field $\bK$ of positive characteristic $p>0$. Suppose further that $\fg$ and $p$ are such that Premet's theorem holds (see \cite{Prem}). Let $M$ be an irreducible $U^{[r]}_\chi(G)$-module and $N$ an irreducible $\Di(G_{r})$-module such that $M\cong N\otimes\Hom_{\tiny \Di(G_{r})}(N,M)$ as $\Di(G_{r})$-modules. Then $p^{\dim(G\cdot\chi)/2}$ divides $\dim \Hom_{\tiny \Di(G_{r})}(N,M)$.
\end{cor}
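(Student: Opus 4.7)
The plan is to chain together two results: Proposition~\ref{Decomp} and Premet's theorem itself. Premet's theorem states that every irreducible $U_{\chi}(\fg)$-module has dimension divisible by $p^{\dim(G\cdot\chi)/2}$. So the entire corollary reduces to identifying $\Hom_{\Di(G_r)}(N,M)$ as an irreducible $U_{\chi}(\fg)$-module.

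Fortunately, this is precisely the content of Proposition~\ref{Decomp}. First I would invoke that proposition: since $M$ is an irreducible $U^{[r]}_{\chi}(G)$-module and $N$ is the irreducible $\Di(G_r)$-module appearing in its $\Di(G_r)$-decomposition, the space $\Hom_{\Di(G_r)}(N,M)$ is an irreducible $U_{\chi}(\fg)$-module. Next I would apply Premet's theorem to this $U_{\chi}(\fg)$-module, which under the stated hypothesis on $\fg$ and $p$ forces $p^{\dim(G\cdot\chi)/2}$ to divide its dimension. Concatenating these two facts yields the conclusion directly.

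There is no real obstacle here; the corollary is simply a one-line consequence of the preceding proposition once the correct $U_{\chi}(\fg)$-module structure on $\Hom_{\Di(G_r)}(N,M)$ has been identified (which was the serious work carried out in Proposition~\ref{Decomp} via the detailed analysis through Remark~\ref{isom} of the isomorphism $E\cong U(\fg)$ and the explicit computation of where the central elements $\ve_{\alpha}^p$ and $\vh_t^p - \vh_t$ are sent). All the heavy lifting has already been done; the corollary merely packages the output together with Premet's bound.
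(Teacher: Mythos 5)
Your proof is correct and is exactly the argument the paper has in mind: the corollary is stated without proof precisely because it is the immediate concatenation of Proposition~\ref{Decomp} (which identifies $\Hom_{\Di(G_r)}(N,M)$ as an irreducible $U_\chi(\fg)$-module) with Premet's divisibility theorem.
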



%


\begin{lemma}\label{pChars}
	Let $P\in \Irr(\Di(G_{r}))$ and $N\in \Irr(U(\fg))$ with $p$-character $\chi\in\fg^{*}$ (so $N\in\Irr(U_\chi(\fg))$). Then the following results hold.
	\begin{enumerate}
		\item $$(U^{[r]}(G)\otimes_D P)\otimes_{U(\fg)} N$$
		is a left $U^{[r]}_\chi(G)$-module;
		\item $U_\chi^{[r]}(G)\otimes_D P$ is a right $U_\chi(\fg)$-module;	
		and 
		\item as $U^{[r]}_\chi(G)$-modules,
		$$(U^{[r]}(G)\otimes_D P)\otimes_{U(\fg)} N\cong (U_\chi^{[r]}(G)\otimes_D P)\otimes_{U_\chi(\fg)} N.$$
	\end{enumerate}
\end{lemma}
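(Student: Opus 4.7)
The plan is to deduce all three statements from the interaction between the central elements $z_\delta := \delta^{\otimes p} - \delta^p \in U^{[r]}(G)$ (Corollary~\ref{central}) and the identification $E \cong U(\fg)$ (Lemma~\ref{End}, Remark~\ref{isom}), following the pattern established in the proof of Proposition~\ref{Decomp}.

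For part (1), since $(U^{[r]}(G) \otimes_D P) \otimes_{U(\fg)} N$ is already a $U^{[r]}(G)$-module, I only need to show that each generator $z_\delta - \chi(\delta)^p$ of the ideal defining $U^{[r]}_\chi(G)$ acts trivially. Because $z_\delta$ is central in $U^{[r]}(G)$, left multiplication by $z_\delta$ on $U^{[r]}(G) \otimes_D P$ is a $U^{[r]}(G)$-module endomorphism, hence corresponds to an element of $E \cong U(\fg)$. The computation in Proposition~\ref{Decomp} (carried out there for the basis elements $\ve_\alpha^{(p^r)}$ and $\binom{\vh_t}{p^r}$) identifies this element as $x^p - x^{[p]}$ where $x = \Upsilon_{r,r}(\delta) \in \fg$; extending to arbitrary $\delta$ by expanding in the divided-power basis and using that $\Upsilon_{r,r}$ kills $\Di^+_{p^r-1}(G)$, the image of $z_\delta$ in $E$ depends only on the Frobenius class of $\delta$. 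Passing through the balance over $U(\fg)$, the central element $z_\delta$ thus acts on the tensor product as the scalar by which $x^p - x^{[p]}$ acts on $N$, namely $\chi(x)^p$. Since $\chi(\delta) = \chi(\Upsilon_{r,r}(\delta)) = \chi(x)$ by the definition of the extension of $\chi$, this equals $\chi(\delta)^p$, as required.

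For part (2), I would write $U^{[r]}_\chi(G) \otimes_D P = (U^{[r]}(G) \otimes_D P)/J$ where $J$ is the left submodule generated by $(z_\delta - \chi(\delta)^p)(U^{[r]}(G)\otimes_D P)$ as $\delta$ ranges over $\Di^+_{p^r}(G)$. Centrality of $z_\delta$ shows that $J$ is also preserved by the right $E$-action, so the right $U(\fg)$-action descends to the quotient, and the identification from part (1) shows that $x^p - x^{[p]}$ then acts on the quotient as multiplication by $\chi(x)^p$ for every $x \in \fg$; hence this action factors through $U_\chi(\fg)$. For part (3), the natural identity
\[
(U^{[r]}(G) \otimes_D P) \otimes_{U(\fg)} N \;\cong\; \bigl((U^{[r]}(G) \otimes_D P) \otimes_{U(\fg)} U_\chi(\fg)\bigr) \otimes_{U_\chi(\fg)} N,
\]
valid because $N$ is killed by the ideal of $U(\fg)$ generated by $\{x^p - x^{[p]} - \chi(x)^p\,\vert\,x \in \fg\}$, combined with the isomorphism $(U^{[r]}(G) \otimes_D P) \otimes_{U(\fg)} U_\chi(\fg) \cong U^{[r]}_\chi(G) \otimes_D P$ of $(U^{[r]}_\chi(G), U_\chi(\fg))$-bimodules (which follows from the analysis in (2)), yields the desired $U^{[r]}_\chi(G)$-module isomorphism.

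The main obstacle I anticipate is justifying the identification of the image of $z_\delta$ in $E \cong U(\fg)$ for \emph{arbitrary} $\delta \in \Di^+_{p^r}(G)$, rather than just for the specific divided powers $\ve_\alpha^{(p^r)}$ and $\binom{\vh_t}{p^r}$ explicitly treated in Proposition~\ref{Decomp}. The coproduct argument used there relies on the vanishing of intermediate divided powers under $\Upsilon_{r,r}$, and one must verify carefully that cross-term cancellations for non-basis $\delta$ do not disturb the outcome; once this is in hand, parts (2) and (3) reduce to formal tensor-product manipulations.
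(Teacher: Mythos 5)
Your proposal is correct and follows the same strategy as the paper's proof: use centrality of $z_\delta$ to realize left multiplication by $z_\delta - \chi(\delta)^p$ as a central element of $E\cong U(\fg)$ identified with $x^p-x^{[p]}-\chi(x)^p$ (where $x=\Upsilon_{r,r}(\delta)$), and pass it through the balanced tensor products; the paper packages part (3) via explicit mutually inverse balanced maps $\widetilde{F},\widetilde{H}$ rather than associativity of the tensor product, but the content is identical. The obstacle you flag about extending the identification to arbitrary $\delta\in\Di^{+}_{p^r}(G)$ is also left implicit in the paper's own proof, and is resolved as you suggest (or, most cleanly, by observing that $\Omega_P=\tau\circ\rho$ and $\Upsilon_{r,r}$ are algebra homomorphisms on $Z^{[r]}_p$ that agree on its generators $(x_i^{(p^r)})^{\otimes p}-(x_i^{(p^r)})^p$, hence everywhere, and $\Upsilon_{r,r}(z_\delta)=x^p-x^{[p]}$).
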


\begin{proof}
	(1) To show that $(U^{[r]}(G)\otimes_D P)\otimes_{U(\fg)} N$ is a left $U^{[r]}_\chi(G)$-module, it is enough to show that $\delta^{\otimes p} -\delta^p-\chi(\delta)^p$ acts on it by zero multiplication for all $\delta\in\Di^{+}_{p^{r}}(G)$. Set $\delta\in\Di^{+}_{p^r}(G)$, and let $x=\Upsilon_{r,r}(\delta)\in\fg$.
	
	Let $u\in U^{[r]}(G)$, $z\in P$ and $n\in N$. Then
	\begin{align*}
		(\delta^{\otimes p} -\delta^p-\chi(\delta)^p)\cdot (u\otimes_D z)\otimes_{\tiny U(\fg)} n=(u\otimes_D z)\cdot(x^p-x^{[p]}-\chi(x)^p)\otimes_{\tiny U(\fg)} n \\= (u\otimes_D z)\otimes_{\tiny U(\fg)} (x^p-x^{[p]}-\chi(x)^p)\cdot n=0.
	\end{align*}

	(2) To show that $U_\chi^{[r]}(G)\otimes_D P$ is a right $U_\chi(\fg)$-module, first note that $\Di(G_{r})$ is a subalgebra of $U_\chi^{[r]}(G)$, so the tensor product makes sense. We will show that $U_\chi^{[r]}(G)\otimes_D P$ is a right $E$-module, on which the left multiplication by $\delta^{\otimes p} -\delta^p-\chi(\delta)^p$ is zero for all $\delta\in\Di_{p^r}^{+}(G)$.
	
	Let $f\in\End_{\tiny U^{[r]}(G)}(U^{[r]}(G)\otimes_D P)^{op}$. We want a linear map $\widetilde{T_f}:U_\chi^{[r]}(G)\otimes_D P\to U_\chi^{[r]}(G)\otimes_D P$. By the universal property of the tensor product, it is enough to give a linear map $T_f:U_\chi^{[r]}(G)\times P\to U^{[r]}(G)\otimes_D P$ which is $\Di(G_{r})$-balanced. 
	
	Define $T_f(\overline{u},z)=\overline{f(u\otimes_D z)}$ for $u\in U^{[r]}(G)$ and $z\in P$, where $\overline{f(u\otimes_D z)}$ is the image of $f(u\otimes_D z)$ under the map $U^{[r]}(G)\otimes_D P\twoheadrightarrow U^{[r]}_\chi(G)\otimes_D P$. First, we must see that this is well-defined. Suppose $\overline{u}=\overline{v}\in U^{[r]}_\chi(G)$. Hence, $u-v\in I\unlhd U^{[r]}(G)$, where $I$ is the ideal generated by $\delta^{\otimes p} -\delta^p-\chi(\delta)^p$ for $\delta\in\Di_{p^r}^{+}(G)$. So $f((u-v)\otimes_D z)\in I\otimes_D P$, so $\overline{f((u-v)\otimes_D z)}=0$. Furthermore, for $d\in\Di(G_{r})$, $$T_f(\overline{u}\cdot d,z)=T_f(\overline{ud},z)=\overline{f(ud\otimes_D z)}=\overline{f(u\otimes_D dz)}=T_f(\overline{u},d\cdot z).$$
	
	Hence, we obtain a linear map $\widetilde{T_f}:U_\chi^{[r]}(G)\otimes_D P\to U_\chi^{[r]}(G)\otimes_D P$. It is straightforward to see that $\widetilde{T_f}\widetilde{T_g}=\widetilde{T_{fg}}$, so $U_\chi^{[r]}(G)\otimes_D P$ is a right $E$-module. One may then check that the action of left multiplication by $\delta^{\otimes p} -\delta^p-\chi(\delta)^p$ is zero for all $\delta\in\Di_{p^r}^{+}(G).$ 
	
	Hence $U_\chi^{[r]}(G)\otimes_D P$ is a right $U_\chi(\fg)$-module. 
	
	(3) All that remains is to show the isomorphism $(U^{[r]}(G)\otimes_D P)\otimes_{U(\fg)} N\cong (U_\chi^{[r]}(G)\otimes_D P)\otimes_{U_\chi(\fg)} N$.
	
	Define the map $F:(U^{[r]}(G)\otimes_D P)\times N\to (U_\chi^{[r]}(G)\otimes_D P)\otimes_{U_\chi(\fg)} N$ by sending the elements $(u\otimes_D z,n)$ to $(\overline{u}\otimes_D z)\otimes_{\tiny U_\chi(\fg)} n$, where $\overline{u}=u+I$. It is easy to see that is map is a well-defined $U_\chi^{[r]}(G)$-module homomorphism. It is also $U(\fg)$-balanced:
	$$F((u\otimes_D z)\cdot f,n)= \overline{f(u\otimes_D z)}\otimes_{\tiny U_\chi(\fg)} n= (u\otimes_D z)\otimes_{\tiny U_\chi(\fg)} \overline{f}\cdot n = F(u\otimes_D z,f\cdot n),$$
	where $u\in U^{[r]}(G)$, $z\in P$, $n\in N$, $f\in E\cong U(\fg)$ and $\overline{f}=f+J\in E/J$, where $J$ is the ideal in $E$ generated by left multiplications by the elements $\delta^{\otimes p} -\delta^p-\chi(\delta)^p$ for $\delta\in\Di_{p^{r}}^{+}(G)$. Hence, there is a $U^{[r]}_\chi(G)$-module homomorphism $\widetilde{F}:(U^{[r]}(G)\otimes_D P)\otimes_{U(\fg)} N\to (U_\chi^{[r]}(G)\otimes_D P)\otimes_{U_\chi(\fg)} N$.
	
	Furthermore, we define $H:(U_\chi^{[r]}(G)\otimes_D P)\times N\to(U^{[r]}(G)\otimes_D P)\otimes_{U(\fg)} N$ by sending the elements $(\overline{u}\otimes_D z,n)$ to $(u\otimes_D z)\otimes_{\tiny U(\fg)} n$. This map is well-defined, since $(U^{[r]}(G)\otimes_D P)\otimes_{U(\fg)} N$
	is a $U^{[r]}_\chi(G)$-module, and a homomorphism of $U^{[r]}_\chi(G)$-modules. It is also $U_\chi(\fg)$-balanced:
	$$H((\overline{u}\otimes_D z)\cdot \overline{f},n)= f(u\otimes_D z)\otimes_{\tiny U_\chi(\fg)} n= (u\otimes_D z)\otimes_{\tiny U_\chi(\fg)} f\cdot n = F((u\otimes_D z),\overline{f}\cdot n),$$
	where $u\in U^{[r]}(G)$, $z\in P$, $n\in N$, $f\in E\cong U(\fg)$ and $\overline{f}=f+J\in E/J$. This gives a $U^{[r]}_\chi(G)$-module homomorphism $\widetilde{H}:(U_\chi^{[r]}(G)\otimes_D P)\otimes_{U_\chi(\fg)} N \to (U^{[r]}(G)\otimes_D P)\otimes_{U(\fg)} N$.
	
	It is straightforward to see that $\widetilde{F}$ and $\widetilde{H}$ are inverse to each other. The result follows.
\end{proof}

\begin{cor}\label{chibij}
	There is a bijection
	$$\Psi_\chi:\underline{\Irr}(U^{[r]}_\chi(G))\to \underline{\Irr}(\Di(G_{r}))\times \underline{\Irr}(U_\chi(\fg))$$
	which sends $M$ to $(P,\Hom_{\tiny G_{r}}(P,M))$, where $P$ is the unique (up to isomorphism) irreducible $\Di(G_{r})$-submodule of $M$. The inverse map sends $(P,N)$ to $(U^{[r]}_\chi(G)\otimes_{\tiny \Di(G_{r})} P)\otimes_{U_\chi(\fg)} N\cong P\otimes_{\bK} N$.
\end{cor}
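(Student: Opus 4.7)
The plan is to obtain this corollary by restricting the bijection $\Psi:\Irr(U^{[r]}(G))\to \Irr(\Di(G_{r}))\times \Irr(U(\fg))$ established in the corollary following Theorem~\ref{equiv}. Since $U^{[r]}_\chi(G)$ is a quotient of $U^{[r]}(G)$, we may view $\Irr(U^{[r]}_\chi(G))$ as the full subcategory of $\Irr(U^{[r]}(G))$ consisting of those irreducibles on which $\delta^{\otimes p}-\delta^p-\chi(\delta)^p$ acts by zero for all $\delta\in\Di^{+}_{p^r}(G)$; similarly $\Irr(U_\chi(\fg))\subseteq \Irr(U(\fg))$ consists of the irreducibles with $p$-character $\chi$. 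The task thus reduces to showing that $\Psi$ restricts to a bijection between these two distinguished subsets.

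For the forward direction I would take $M\in\Irr(U^{[r]}_\chi(G))$, write $\Psi(M)=(P,N)$ with $N=\Hom_{G_r}(P,M)$, and invoke Proposition~\ref{Decomp} to conclude that $N$ lies in $\Irr(U_\chi(\fg))$ rather than just $\Irr(U(\fg))$. Nothing further is needed here, as this is exactly the deferred content of that proposition.

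For the reverse direction I would start with a pair $(P,N)\in\Irr(\Di(G_r))\times\Irr(U_\chi(\fg))$, form the irreducible $U^{[r]}(G)$-module $\Phi_P(N)=(U^{[r]}(G)\otimes_D P)\otimes_{U(\fg)} N$ supplied by Theorem~\ref{equiv}, and appeal to Lemma~\ref{pChars}(1) which ensures that the action factors through $U^{[r]}_\chi(G)$. This shows $\Phi_P(N)\in\Irr(U^{[r]}_\chi(G))$, so $\Psi$ and its inverse restrict as desired, and the bijection follows immediately.

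To present the inverse in the form stated in the corollary, I would then apply Lemma~\ref{pChars}(3) to rewrite $(U^{[r]}(G)\otimes_D P)\otimes_{U(\fg)} N$ as $(U^{[r]}_\chi(G)\otimes_D P)\otimes_{U_\chi(\fg)} N$, and combine this with the $\Di(G_r)$-module isomorphism $\Phi_P(N)\cong P\otimes_\bK N$ already recorded in the proof of Theorem~\ref{equiv}. Since all the substantive work is carried by earlier results, there is no real obstacle here; the only subtle point is making sure that the $\chi$-condition is preserved in both directions, which is precisely the reason Proposition~\ref{Decomp} and Lemma~\ref{pChars} were established beforehand.
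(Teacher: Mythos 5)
Your proposal is correct and follows exactly the route the paper implicitly relies on: the corollary is placed immediately after Lemma~\ref{pChars} precisely because parts (1) and (3) of that lemma supply the reverse direction and the rewriting of the inverse, while Proposition~\ref{Decomp} supplies the forward direction, all riding on the bijection of Theorem~\ref{equiv} and its corollary. Your identification of the three ingredients and how they fit together is accurate, so no further argument is needed.
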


\subsection{Teenage Verma modules}
\label{s2.2}

We can use the previous section to deduce some structural results about irreducible $U^{[r]}_\chi(G)$-modules. We start by defining the following vector subspace of $U^{[r]}(G)$, using the $\llbracket\cdot\rrbracket$ notation from \cite{West}:
$$\widehat{U^{[r]}(B)}\coloneqq\bK-\mbox{span}\{\prod_{\alpha\in\Phi^+}\ve_{\alpha}^{\llbracket i_\alpha\rrbracket}\prod_{t=1}^d\binom{\vh_{t}}{\llbracket k_t\rrbracket}\prod_{\alpha\in\Phi^+}\ve_{-\alpha}^{\llbracket j_\alpha\rrbracket }\quad :\quad 0\leq i_\alpha,k_t,\,0\leq j_\alpha<p^{r}\,\}.$$

This vector space is in fact a subalgebra of $U^{[r]}(G)$ by the multiplication equations given in \cite{Cham}. Furthermore, the Hopf algebra structure on $U^{[r]}(G)$ makes $\widehat{U^{[r]}(B)}$ into a Hopf subalgebra of $U^{[r]}(G)$.

Clearly $\Di(G_{r})$ is a subalgebra of $\widehat{U^{[r]}(B)}$, it is normal since it is normal in $U^{[r]}(G)$, and $\widehat{U^{[r]}(B)}$ is free as both a left and right $\Di(G_{r})$-module.

From \cite{West}, we know that the map $\Upsilon_{r,r}:U^{[r]}(G)\to U(\fg)$ is a surjective Hopf algebra homomorphism. It is easy to see from the bases that this map restricts to a surjective Hopf algebra homomorphism $\widehat{U^{[r]}(B)}\twoheadrightarrow U(\fb)$, with kernel $\widehat{U^{[r]}(B)}\Di^{+}(G_{r})=\Di^{+}(G_{r})\widehat{U^{[r]}(B)}$. In particular, $\Di(G_{r})\subset \widehat{U^{[r]}(B)}$ is a $U(\fb)$-module extension, with $\Di(G_{r})=\widehat{U^{[r]}(B)}^{co U(\fb)}$.

\begin{lemma}
	Let $P\in\Irr(\Di(G_{r})$. Then $\End_{\widehat{U^{[r]}(B)}}(\widehat{U^{[r]}(B)}\otimes_D P)\cong U(\fb)$.  
\end{lemma}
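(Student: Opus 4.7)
The plan is to mimic the proof of Lemma~\ref{End} in this restricted setting, i.e.\ to show that the inclusion $\Di(G_r)\subset\widehat{U^{[r]}(B)}$ is a cleft $U(\fb)$-Galois extension and then invoke Schneider's theorem as before. The paragraph immediately preceding the lemma already establishes that $\Di(G_r)\subset\widehat{U^{[r]}(B)}$ is a $U(\fb)$-module extension with $\Di(G_r)=\widehat{U^{[r]}(B)}^{co U(\fb)}$, so the substantive content is (a) checking the Galois and cleft properties for this smaller extension, and (b) verifying stability of $P$.

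For the Galois property I would argue exactly as in Section~\ref{s1}: the spanning set for $\widehat{U^{[r]}(B)}$ given in the definition, together with the fact that $\widehat{U^{[r]}(B)}$ surjects onto $U(\fb)$ along $\Upsilon_{r,r}$ with kernel $\widehat{U^{[r]}(B)}\Di^+(G_r)$, forces the canonical Galois map to be an isomorphism by a basis count. For cleftness I would construct a right $U(\fb)$-comodule map $\gamma_\fb:U(\fb)\to\widehat{U^{[r]}(B)}$ in analogy with Proposition~\ref{cleft}: fix the PBW basis of $U(\fb)$ consisting of ordered monomials in the $\ve_\alpha$ ($\alpha\in\Phi^+$) and the $\vh_t$, and send such a monomial to the corresponding ordered product in $\widehat{U^{[r]}(B)}$, with each $\ve_\alpha^{a_\alpha}$ replaced by $(\ve_\alpha^{(p^r)})^{\otimes a_\alpha}$ and each $\vh_t^{c_t}$ by $\binom{\vh_t}{p^r}^{\otimes c_t}$. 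The verification that $\gamma_\fb$ is a comodule map proceeds exactly as in Proposition~\ref{cleft}, using the divided-power coproduct formula together with the fact that $\Upsilon_{r,r}$ annihilates $\ve_\alpha^{(j)}$ and $\binom{\vh_t}{j}$ for $0<j<p^r$. Convolution-invertibility is automatic with inverse $S\circ\gamma_\fb$ since $\widehat{U^{[r]}(B)}$ is itself a Hopf algebra.

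With a cleft $U(\fb)$-Galois extension in hand, Schneider's Theorem~3.6 in \cite{Sch} delivers the desired isomorphism as soon as $P$ is a stable $\Di(G_r)$-module. Stability is immediate here: the standing hypothesis on $X(T)/p^rX(T)$ ensures every $P\in\Irr(\Di(G_r))$ extends to a $U^{[r]}(G)$-module, and restricting along $\widehat{U^{[r]}(B)}\hookrightarrow U^{[r]}(G)$ extends $P$ further to a $\widehat{U^{[r]}(B)}$-module, so Remark~3.2.3 of \cite{Sch} applies. Schneider's theorem then yields $\End_{\widehat{U^{[r]}(B)}}(\widehat{U^{[r]}(B)}\otimes_D P)^{op}\cong\bK\# U(\fb)\cong U(\fb)$; passing to opposites and using that $U(\fb)\cong U(\fb)^{op}$ via the antipode of $U(\fb)$ gives the lemma as stated. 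The main obstacle is really just bookkeeping: confirming that the PBW and coproduct calculations behind the Galois and cleft properties survive for the asymmetric subalgebra $\widehat{U^{[r]}(B)}$ (bounded in the $\fn^-$-direction but unbounded in the $\fb$-direction) rather than for the full algebra $U^{[r]}(G)$.
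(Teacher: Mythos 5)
Your proof is correct and follows the same route the paper intends: establish that $\Di(G_r)\subset\widehat{U^{[r]}(B)}$ is a $U(\fb)$-cleft Hopf-Galois extension with $P$ stable, and then invoke Schneider's theorem exactly as in Lemma~\ref{End}. The paper compresses all of this into a single citation of Lemma~7.1.5 of \cite{West} plus the observation that $\widehat{U^{[r]}(B)}$ is a subalgebra of $U^{[r]}(G)$, so you have simply written out the details (the explicit cleaving map $\gamma_\fb$, the Galois map, stability via extension and restriction) that the paper treats as already supplied by the preceding paragraph and the analogy with the full algebra.
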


\begin{proof}
	This follows as in Lemma 7.1.5 from \cite{West}, since $\widehat{U^{[r]}(B)}$ is a subalgebra of $U^{[r]}(G)$.
\end{proof}

It is straightforward to see that the proof of Theorem 7.1.4 in \cite{West} and the proof of Theorem~\ref{equiv} above hold similarly in this context. In other words, we have the following proposition.

\begin{prop}\label{bij}
	There is a bijection
	$$\widehat{\Psi}:\underline{\Irr}(\widehat{U^{[r]}(B)})\to \underline{\Irr}(\Di(G_{r}))\times \underline{\Irr}(U(\fb))$$
	which sends $M$ to $(P,\Hom_{\tiny G_{r}}(P,M))$, where $P$ is the unique (up to isomorphism) irreducible $\Di(G_{r})$-submodule of $M$. The inverse map sends $(P,N)$ to the $\widehat{U^{[r]}(B)}$-module $(\widehat{U^{[r]}(B)}\otimes_D P)\otimes_{U(\fb)} N=P\otimes_{\bK} N$.
\end{prop}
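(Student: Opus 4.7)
The plan is to mirror, step by step, the argument of Theorem~\ref{equiv}, with $\widehat{U^{[r]}(B)}$ in place of $U^{[r]}(G)$ and $U(\fb)$ in place of $U(\fg)$. All the structural input needed is already in place: writing $D\coloneqq\Di(G_r)$, we have that $D$ is normal in $\widehat{U^{[r]}(B)}$; that $\widehat{U^{[r]}(B)}$ is free on both sides over $D$; that the inclusion is a $U(\fb)$-extension with $\widehat{U^{[r]}(B)}^{coU(\fb)}=D$; and that the preceding lemma identifies the endomorphism algebra $\widehat{E}\coloneqq \End_{\widehat{U^{[r]}(B)}}(\widehat{U^{[r]}(B)}\otimes_D P)^{op}$ with $U(\fb)$.

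For each $P\in\Irr(D)$, let $\Gamma_P^B$ denote the full subcategory of $\Irr(\widehat{U^{[r]}(B)})$ whose objects restrict to a direct sum of copies of $P$ as $D$-modules. I would then run the proof of Theorem~\ref{equiv} verbatim to establish an equivalence
$$\widehat{\Psi}_P:\Gamma_P^B\longrightarrow \Irr(\widehat{E})=\Irr(U(\fb)),\qquad M\mapsto \Hom_D(P,M),$$
with quasi-inverse $\widehat{\Phi}_P(N)=P\otimes_\bK N$. The module-structure statements (the $\widehat{E}$-action on $\Hom_D(P,M)$ via post-composition, the $\widehat{U^{[r]}(B)}$-action on $P\otimes_\bK N\cong (\widehat{U^{[r]}(B)}\otimes_D P)\otimes_{\widehat{E}} N$, and the compatibility isomorphism $\eta_M$) transfer directly from \cite{West}. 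Irreducibility on both sides is then obtained by the usual submodule-pullback argument of Theorem~\ref{equiv}: a proper $\widehat{E}$-submodule of $\Hom_D(P,M)$ would yield a proper $\widehat{U^{[r]}(B)}$-submodule of $M$ and vice versa, and the dimension count $\dim N=\dim\Hom_D(P,\widehat{\Phi}_P(N))$ forces the analog of the map $\sigma_N$ from Theorem~\ref{equiv} to be an isomorphism.

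The final step is to assemble these $P$-local equivalences into the global bijection of the proposition, which amounts to verifying that every $M\in\Irr(\widehat{U^{[r]}(B)})$ lies in a unique $\Gamma_P^B$. This is a Clifford-theoretic observation: because $D$ is normal in $\widehat{U^{[r]}(B)}$, the $D$-isotypic components of $M|_D$ are permuted by the conjugation action of $\widehat{U^{[r]}(B)}$, and the sum of those in a single orbit is a $\widehat{U^{[r]}(B)}$-submodule, hence equals $M$. The main obstacle is to show that each such orbit is a singleton, that is, that every $P\in\Irr(D)$ is stable under $\widehat{U^{[r]}(B)}$. This is exactly where the standing hypothesis on $X(T)/p^rX(T)$ enters: by that assumption (via Steinberg's tensor product theorem as discussed before Lemma~\ref{End}) every $P$ extends to a $U^{[r]}(G)$-module, and the inclusion $\widehat{U^{[r]}(B)}\subseteq U^{[r]}(G)$ pulls this stability back to give a $\widehat{U^{[r]}(B)}$-extension of $P$. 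With stability in hand, uniqueness of $P$ as the isotypic type of $M|_D$ is immediate, and the desired bijection follows.
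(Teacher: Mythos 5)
Your proposal is correct and follows essentially the same route the paper takes; the paper's own proof is the one-line remark that the argument of Theorem~\ref{equiv} (together with Theorem 7.1.4 of \cite{West}) transfers verbatim, and your write-up simply makes that transfer explicit — the structural inputs you list (normality of $D=\Di(G_r)$, $D$-freeness of $\widehat{U^{[r]}(B)}$, the $U(\fb)$-extension with $\widehat{U^{[r]}(B)}^{coU(\fb)}=D$, and $\widehat{E}\cong U(\fb)$ from the preceding lemma) are exactly what that transfer requires, and your stability argument (extend $P$ to $U^{[r]}(G)$ via the standing hypothesis, then restrict along $\widehat{U^{[r]}(B)}\hookrightarrow U^{[r]}(G)$) is the same mechanism invoked in Remark~\ref{isom} via \cite[Remark 3.2.3]{Sch}.
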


Applying Lemma~\ref{Decomp} and Lemma~\ref{pChars} in this context, we get the following corollary.

\begin{cor}
	The bijection in Proposition~\ref{bij} restricts to a bijection
	$$\widehat{\Psi_\chi}:\underline{\Irr}(\widehat{U^{[r]}_\chi(B)})\to \underline{\Irr}(\Di(G_{r}))\times \underline{\Irr}(U_\chi(\fb)).$$
\end{cor}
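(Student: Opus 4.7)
The plan is to carry over the proofs of Proposition~\ref{Decomp} and Lemma~\ref{pChars} from $U^{[r]}(G)$ to its Hopf subalgebra $\widehat{U^{[r]}(B)}$. Proposition~\ref{bij} already supplies the underlying bijection, so the task reduces to matching the $\chi$-condition on each side.

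For the forward direction, given $M \in \Irr(\widehat{U^{[r]}_\chi(B)})$ I first view it as an irreducible $\widehat{U^{[r]}(B)}$-module and apply Proposition~\ref{bij} to obtain $M \cong P \otimes_\bK N$ with $P \in \Irr(\Di(G_r))$ and $N = \Hom_{G_r}(P,M) \in \Irr(U(\fb))$. I then mimic Proposition~\ref{Decomp}: for each basis distribution $\delta$ of order $p^r$ living in $\widehat{U^{[r]}(B)}$, i.e.\ $\delta = \ve_\alpha^{(p^r)}$ with $\alpha \in \Phi^+$ or $\delta = \binom{\vh_t}{p^r}$, the element $\delta^{\otimes p} - \delta^p$ is central in $U^{[r]}(G)$ by Corollary~\ref{central}, hence central in the subalgebra $\widehat{U^{[r]}(B)}$. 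Left multiplication by it defines an endomorphism $\rho(\delta)$ of $\End_{\widehat{U^{[r]}(B)}}(\widehat{U^{[r]}(B)} \otimes_D P)^{op} \cong U(\fb)$ which, under the isomorphism, matches $\Upsilon_{r,r}(\delta)^p - \Upsilon_{r,r}(\delta)^{[p]}$. Since $M$ is a $\widehat{U^{[r]}_\chi(B)}$-module, $\rho(\delta)$ acts on $N$ as multiplication by $\chi(\delta)^p$, and because the $\Upsilon_{r,r}(\delta)$ span $\fb$ this is precisely the relation making $N$ into an irreducible $U_\chi(\fb)$-module.

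For the reverse direction, given $(P,N) \in \Irr(\Di(G_r)) \times \Irr(U_\chi(\fb))$, Proposition~\ref{bij} produces an irreducible $\widehat{U^{[r]}(B)}$-module $(\widehat{U^{[r]}(B)} \otimes_D P) \otimes_{U(\fb)} N \cong P \otimes_\bK N$. The slide-across calculation of Lemma~\ref{pChars}(1) then shows that $\delta^{\otimes p} - \delta^p - \chi(\delta)^p$ acts as zero, so the module descends to $\widehat{U^{[r]}_\chi(B)}$. The two constructions are mutually inverse because this property is inherited directly from Proposition~\ref{bij}.

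The only potentially delicate point is verifying that the isomorphism $E \cong U(\fb)$ intertwines $\rho(\delta)$ with $\Upsilon_{r,r}(\delta)^p - \Upsilon_{r,r}(\delta)^{[p]}$. However, the coalgebra manipulations of $(\ve_\alpha^{(p^r)})^{\otimes p}$ and $\binom{\vh_t}{p^r}^{\otimes p} - \binom{\vh_t}{p^r}$ required for this identification take place entirely inside $\widehat{U^{[r]}(B)}$, so the computation from Proposition~\ref{Decomp} transfers verbatim and the verification is mechanical.
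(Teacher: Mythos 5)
Your proposal is correct and follows exactly the route the paper intends: the paper's entire justification is the sentence ``Applying Lemma~\ref{Decomp} and Lemma~\ref{pChars} in this context, we get the following corollary,'' and you have supplied the details — transferring the coalgebra computation of Proposition~\ref{Decomp} to identify $\rho(\delta)$ with $\Upsilon_{r,r}(\delta)^p - \Upsilon_{r,r}(\delta)^{[p]}$ inside $\End_{\widehat{U^{[r]}(B)}}(\widehat{U^{[r]}(B)}\otimes_D P)^{op}\cong U(\fb)$, and running the Lemma~\ref{pChars}(1) slide-across for the reverse direction. You also correctly flag that the only $\delta\in\Di^+_{p^r}(G)$ available in $\widehat{U^{[r]}(B)}$ are $\ve_\alpha^{(p^r)}$ for $\alpha\in\Phi^+$ and $\binom{\vh_t}{p^r}$, whose images under $\Upsilon_{r,r}$ span $\fb$, and that all the divided powers $\ve_\alpha^{(i)}$, $\binom{\vh_t}{i}$ with $i\leq p^r$ needed for the $\Delta$-computations do lie in $\widehat{U^{[r]}(B)}$, so the argument is genuinely internal.
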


Assume from now on that $\chi(\fn^{+})=0$. It is well known (see, for example, \cite{Jan}) that, if $N\in\Irr(U_\chi(\fb))$, then $N=\bK_\lambda$ for some $\lambda\in\Lambda_\chi$, where $\bK_\lambda$ denotes the 1-dimensional $\fb$-module on which $\fn^{+}$ acts trivially and $h\in\fh$ acts through multiplication by $\lambda(h)$. Recall here that
$$\Lambda_\chi\coloneqq\{\lambda\in \fh^{*}\,\vert\,\lambda(h)^p-\lambda(h)=\chi(h)^p\;\mbox{for all}\; h\in\fh\}.$$

Hence, there is a bijection, 
$$\widehat{\Psi}:\underline{\Irr}(\widehat{U^{[r]}_\chi(B)})\to \underline{\Irr}(\Di(G_{r}))\times \Lambda_\chi.$$

In other words, every irreducible $\Di(G_{r})$-module $P$ can be extended to an irreducible $\widehat{U^{[r]}_\chi(B)}$-module, and there is one such way to do this for each $\lambda\in \Lambda_\chi$. For each $\lambda\in\Lambda_\chi$, we can hence define the $U^{[r]}_\chi(G)$-module
\begin{equation*}
	\begin{split}
		U_\chi^{[r]}(G)\otimes_{\tiny \widehat{U^{[r]}_\chi(B)}} (P\otimes_\bK \bK_\lambda) & = U_\chi^{[r]}(G)\otimes_{\tiny \widehat{U^{[r]}_\chi(B)}}(\widehat{U^{[r]}_\chi(B)}\otimes_D P)\otimes_{\tiny U_\chi(\fb)}\bK_\lambda \\
		& \stackrel{\star}{=} (U_\chi^{[r]}(G)\otimes_{\tiny \widehat{U^{[r]}_\chi(B)}}\widehat{U^{[r]}_\chi(B)}\otimes_D P)\otimes_{\tiny U_\chi(\fb)} \bK_\lambda\\
		& =(U^{[r]}_\chi(G)\otimes_D P)\otimes_{\tiny U_\chi(\fb)} \bK_\lambda\\
		& =(U^{[r]}_\chi(G)\otimes_D P)\otimes_{\tiny U_\chi(\fg)} U_\chi(\fg)\otimes_{\tiny U_{\chi}(\fb)} \bK_\lambda\\
		& =(U^{[r]}_\chi(G)\otimes_D P)\otimes_{\tiny U_\chi(\fg)} Z_\chi(\lambda)\\
		& =P\otimes_{\bK} Z_\chi(\lambda).
	\end{split}
\end{equation*}

Here, equality ($\star$) follows from an easy check. 

We call this $U^{[r]}_\chi(G)$-module the {\bf teenage Verma module} $Z^r_\chi(P,\lambda)$. Note that $\dim(Z^r_\chi(P,\lambda))=p^{\dim(\fn^{-})}\dim(P)$. Frobenius reciprocity them gives the following proposition, proving both conjectures from Subsection 6.5 in \cite{West}.

\begin{prop}\label{tVm}
	Every irreducible $U^{[r]}_\chi(G)$-module is a quotient of a teenage Verma module $Z_\chi^r(P,\lambda)$ for some $P\in\Irr(\Di(G_{r}))$ and $\lambda\in\Lambda_\chi$.
\end{prop}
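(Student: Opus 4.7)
The plan is to combine the Steinberg-type bijection of Corollary~\ref{chibij} with the classical fact that every irreducible module over a reduced enveloping algebra $U_\chi(\fg)$ (with $\chi(\fn^+)=0$) is a quotient of a baby Verma module. Let $M$ be an irreducible $U^{[r]}_\chi(G)$-module. By Corollary~\ref{chibij} there is a unique pair $(P,N)$ with $P\in\Irr(\Di(G_r))$ and $N\in\Irr(U_\chi(\fg))$ such that
$$M\;\cong\;(U^{[r]}_\chi(G)\otimes_D P)\otimes_{U_\chi(\fg)} N$$
as $U^{[r]}_\chi(G)$-modules, where $D=\Di(G_{r})$.

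Next, since we are assuming $\chi(\fn^+)=0$, standard reduced-enveloping-algebra theory (see e.g.\ \cite{Jan}) supplies some $\lambda\in\Lambda_\chi$ together with a surjection $Z_\chi(\lambda)\twoheadrightarrow N$ of $U_\chi(\fg)$-modules, where $Z_\chi(\lambda)=U_\chi(\fg)\otimes_{U_\chi(\fb)}\bK_\lambda$ is the baby Verma module. I would then apply the functor $(U^{[r]}_\chi(G)\otimes_D P)\otimes_{U_\chi(\fg)}(-)$ to this surjection. This functor, from $U_\chi(\fg)$-modules to $U^{[r]}_\chi(G)$-modules, is right exact (it is a tensor product), and the induced map is automatically $U^{[r]}_\chi(G)$-linear since the $U^{[r]}_\chi(G)$-action is carried on the leftmost tensor factor.

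The result is a surjection of $U^{[r]}_\chi(G)$-modules whose target is $(U^{[r]}_\chi(G)\otimes_D P)\otimes_{U_\chi(\fg)} N\cong M$ and whose source is $(U^{[r]}_\chi(G)\otimes_D P)\otimes_{U_\chi(\fg)} Z_\chi(\lambda)$. But by the chain of equalities displayed immediately before the proposition, the latter is precisely the teenage Verma module $Z^r_\chi(P,\lambda)$. This gives the required surjection $Z^r_\chi(P,\lambda)\twoheadrightarrow M$.

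There is no serious obstacle: every ingredient has already been established either in this paper (Corollary~\ref{chibij}, Lemma~\ref{pChars}, the definition of $Z^r_\chi(P,\lambda)$) or in the literature on reduced enveloping algebras. The only bookkeeping to watch is that the chosen identifications between $P\otimes_\bK N$, $(U^{[r]}_\chi(G)\otimes_D P)\otimes_{U_\chi(\fg)} N$, and the corresponding induced picture via $\widehat{U^{[r]}_\chi(B)}$ are consistent, but this is handled by Corollary~\ref{chibij} and the preceding calculation, so the remark that ``Frobenius reciprocity gives the result'' is essentially the whole proof.
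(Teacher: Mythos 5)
Your argument is correct, but it takes a genuinely different route from the paper's. The paper's terse ``Frobenius reciprocity then gives the following proposition'' means the adjunction between $U^{[r]}_\chi(G)\otimes_{\widehat{U^{[r]}_\chi(B)}}(-)$ and restriction, applied directly at the $U^{[r]}_\chi$ level: since $M$ is finite-dimensional it contains an irreducible $\widehat{U^{[r]}_\chi(B)}$-submodule, which by the classification preceding the proposition is of the form $P\otimes_\bK\bK_\lambda$, and then
$$\Hom_{U^{[r]}_\chi(G)}\bigl(Z^r_\chi(P,\lambda),M\bigr)\cong\Hom_{\widehat{U^{[r]}_\chi(B)}}\bigl(P\otimes_\bK\bK_\lambda,M\bigr)\neq0,$$
so a nonzero map onto the irreducible $M$ is a surjection. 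You instead route through the Clifford-theoretic equivalence of Corollary~\ref{chibij}: you first pin down $M\cong(U^{[r]}_\chi(G)\otimes_D P)\otimes_{U_\chi(\fg)}N$, import the classical $U_\chi(\fg)$-statement that $N$ is a quotient of some $Z_\chi(\lambda)$, and push the surjection through the right-exact functor $(U^{[r]}_\chi(G)\otimes_D P)\otimes_{U_\chi(\fg)}(-)$. Both are valid: the paper's is more self-contained at the $U^{[r]}_\chi$ level, while yours treats the reduced enveloping algebra result as a black box and exercises the machinery of Lemma~\ref{pChars} and Corollary~\ref{chibij} more heavily. One small correction to your closing remark: the ``Frobenius reciprocity'' the paper invokes is between $U^{[r]}_\chi(G)$ and $\widehat{U^{[r]}_\chi(B)}$, whereas in your argument Frobenius reciprocity appears only indirectly, hidden inside the classical fact about $Z_\chi(\lambda)\twoheadrightarrow N$; the two proofs are not merely notational variants of each other.
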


Despite the fact that baby Verma modules and teenage Verma modules need not be irreducible, the following lemma shows that the correspondence in Corollary~\ref{chibij} can be extended to these modules.

\begin{lemma}\label{BVCorresp}
	For $P\in\Irr(\Di(G_{r}))$ and $\lambda\in \Lambda_\chi$, $\Hom_{G_{r}}(P,Z_\chi^r(P,\lambda))\cong Z_\chi(\lambda)$ as left $U_\chi(\fg)$-modules.
\end{lemma}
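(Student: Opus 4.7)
The plan is to reduce the statement directly to Remark~\ref{NotIrr}, applied to the $E$-module $N = Z_\chi(\lambda)$. Recall that $E \cong U(\fg)$ by Lemma~\ref{End} and Remark~\ref{isom}, so we may view $Z_\chi(\lambda)$ as an $E$-module by first regarding it as a $U(\fg)$-module via the surjection $U(\fg) \twoheadrightarrow U_\chi(\fg)$ and then transporting along the isomorphism. Remark~\ref{NotIrr} then yields an isomorphism
\[
Z_\chi(\lambda) \;\cong\; \Hom_{G_r}\bigl(P,\,(U^{[r]}(G)\otimes_D P)\otimes_{U(\fg)} Z_\chi(\lambda)\bigr)
\]
of left $E$-modules.

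Next I would identify the right-hand side with $\Hom_{G_r}(P, Z_\chi^r(P,\lambda))$. This is immediate from the chain of equalities displayed just before the definition of $Z_\chi^r(P,\lambda)$, which shows
\[
(U^{[r]}(G)\otimes_D P)\otimes_{U(\fg)} Z_\chi(\lambda) \;\cong\; (U_\chi^{[r]}(G)\otimes_D P)\otimes_{U_\chi(\fg)} Z_\chi(\lambda) \;=\; Z_\chi^r(P,\lambda),
\]
using part (3) of Lemma~\ref{pChars}. Combining with the previous display gives the desired isomorphism as $U(\fg)$-modules.

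Finally, I would upgrade this to an isomorphism of $U_\chi(\fg)$-modules. The left-hand side is a $U_\chi(\fg)$-module by definition. For the right-hand side, $Z_\chi^r(P,\lambda)$ is a $U_\chi^{[r]}(G)$-module, so the computation in the proof of Proposition~\ref{Decomp}, which shows that the element $x^p - x^{[p]} - \chi(x)^p \in U(\fg)$ acts as zero on $\Hom_{G_r}(P,M)$ whenever $M$ is a $U_\chi^{[r]}(G)$-module, applies verbatim (irreducibility of $M$ was never used for this step). Hence the $U(\fg)$-action factors through $U_\chi(\fg)$ on both sides, and the isomorphism is automatically one of $U_\chi(\fg)$-modules.

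The only step that requires any genuine care is the compatibility of the $E$-action from Remark~\ref{NotIrr} with the natural $U_\chi(\fg)$-action on $Z_\chi(\lambda)$; this compatibility is built into the explicit form of the isomorphism $U(\fg) \xrightarrow{\sim} E$ given in Remark~\ref{isom} and is precisely what is verified in the proof of Proposition~\ref{Decomp}. So no new argument is needed beyond reassembling Remark~\ref{NotIrr}, Lemma~\ref{pChars}, and the $p$-character computation from Proposition~\ref{Decomp}.
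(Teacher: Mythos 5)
Your proposal is correct and takes essentially the same route as the paper, which proves the lemma with the one-line remark that it follows directly from Remark~\ref{NotIrr}; you have simply made explicit the bookkeeping the paper leaves implicit, namely passing through Lemma~\ref{pChars}(3) to identify $(U^{[r]}(G)\otimes_D P)\otimes_{U(\fg)} Z_\chi(\lambda)$ with $Z_\chi^r(P,\lambda)$ and invoking the $p$-character computation from the proof of Proposition~\ref{Decomp} to see the $E$-action factors through $U_\chi(\fg)$. One small point worth flagging if you wrote this up: Lemma~\ref{pChars}(3) is stated only for irreducible $N$, so you should note (as you implicitly do) that its proof never uses irreducibility and hence applies to $N=Z_\chi(\lambda)$.
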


\begin{proof}
	This follows directly from Remark~\ref{NotIrr}.
\end{proof}

We also obtain the following structural result.	

%
%

\begin{prop}\label{quotCorresp}
	Suppose $M\in\Irr(U_\chi^{[r]}(G))$, $P\in\Irr(\Di(G_{r}))$ and $N\in\Irr(U_\chi(\fg))$ such that $\Psi_{\chi}(M)=(P,N)$. Then $M$ is an irreducible quotient of $Z^r_\chi(P,\lambda)$ if and only if $N$ is an irreducible quotient of $Z_\chi(\lambda)$.
\end{prop}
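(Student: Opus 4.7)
The plan is to transport the problem across the equivalence from Corollary~\ref{chibij}, using the fact that $Z_\chi^r(P,\lambda)$ corresponds to $Z_\chi(\lambda)$ under $\Psi_P$ (by Lemma~\ref{BVCorresp}) and that $M$ corresponds to $N$ (by hypothesis).

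First I would extend the functors $\Phi_P$ and $\Psi_P$ of Theorem~\ref{equiv} beyond the irreducible setting. Define $\Phi_P$ on all finite-dimensional $U_\chi(\fg)$-modules $N'$ by $\Phi_P(N') = P\otimes_\bK N'$ with the $U^{[r]}_\chi(G)$-structure given by Lemma~\ref{pChars}, and define $\Psi_P = \Hom_{G_r}(P,-)$ on the full subcategory of $U^{[r]}_\chi(G)$-modules which are $P$-isotypic as $\Di(G_r)$-modules. The natural maps $\sigma_{N'}$ and $\eta_{M'}$ appearing in the proof of Theorem~\ref{equiv} are defined without any appeal to irreducibility: the dimension count making $\sigma_{N'}$ an isomorphism works for any finite-dimensional $N'$, while $\eta_{M'}$ is an isomorphism by Theorem 7.1.4 in \cite{West}, which applies to any module that is $P$-isotypic as a $\Di(G_r)$-module. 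Both functors are exact on these domains: $\Phi_P$ visibly, and $\Psi_P$ because $\Di(G_r)$ acts semisimply on its $P$-isotypic subcategory, so every short exact sequence of $P$-isotypic modules splits as a sequence of $\Di(G_r)$-modules.

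By the definition of the teenage Verma module, $Z_\chi^r(P,\lambda) = \Phi_P(Z_\chi(\lambda))$, and by Corollary~\ref{chibij} the hypothesis $\Psi_\chi(M)=(P,N)$ gives $M\cong \Phi_P(N)$ as $U^{[r]}_\chi(G)$-modules. For the forward direction, if $M$ is an irreducible quotient of $Z_\chi^r(P,\lambda)$, then applying the exact functor $\Psi_P$ to the surjection $Z_\chi^r(P,\lambda)\twoheadrightarrow M$ and invoking Lemma~\ref{BVCorresp} produces a surjection $Z_\chi(\lambda)\twoheadrightarrow N$; since $N$ is irreducible by hypothesis, it is an irreducible quotient. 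Conversely, a surjection $Z_\chi(\lambda)\twoheadrightarrow N$ yields, upon applying the exact functor $\Phi_P$, a surjection $\Phi_P(Z_\chi(\lambda))\twoheadrightarrow \Phi_P(N)$, which is precisely a surjection $Z_\chi^r(P,\lambda)\twoheadrightarrow M$.

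The main obstacle I expect is the verification that the equivalence of Theorem~\ref{equiv} really extends cleanly to the non-irreducible setting together with the exactness of both extended functors. Once these foundational points are settled, the statement is a purely formal transport of structure through the equivalence $\Phi_P \dashv \Psi_P$, with Lemma~\ref{BVCorresp} supplying the essential identification of the teenage Verma module with the baby Verma module.
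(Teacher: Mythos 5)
Your proposal is correct and follows essentially the same strategy as the paper's proof: transport the surjection through the equivalence of categories, using Lemma~\ref{BVCorresp} and Remark~\ref{NotIrr} to identify the teenage Verma module with the baby Verma module under $\Psi_P$/$\Phi_P$. The one small difference is in the forward direction: you obtain the surjection $Z_\chi(\lambda)\twoheadrightarrow N$ from exactness of $\Hom_{G_r}(P,-)$ on the $P$-isotypic subcategory (via semisimplicity), whereas the paper simply applies $\Hom_{G_r}(P,-)$ to the surjection to get a $U_\chi(\fg)$-module map $Z_\chi(\lambda)\to N$ and then notes this map is nonzero (since $Z_\chi^r(P,\lambda)$ is $P$-isotypic and $M\neq 0$) and hence surjective by irreducibility of $N$; the backward directions are the same. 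Both arguments are valid, and your more systematic packaging via exactness of both functors is perfectly sound.
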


\begin{proof}
	($\implies$) By definition of $\Psi_\chi$ and Lemma~\ref{BVCorresp}, $N=\Hom_{G_{r}}(P,M)$ and $Z_\chi(\lambda)=\Hom_{G_{r}}(P,Z^r_\chi(P,\lambda))$. Let $\pi:Z_\chi^r(P,\lambda)\to M$ be the given surjection. We then define the map $\eta:Z_\chi(\lambda)\to N$ by defining the map $\eta:\Hom_{G_{r}}(P,Z^r_\chi(P,\lambda))\to \Hom_{G_{r}}(P,M)$ as $\eta(f)(z)=\pi f(z)$ for $f\in \Hom_{G_{r}}(P,Z^r_\chi(P,\lambda))$ and $z\in P$. It is straightforward to check that this is an $E$-module homomorphism, hence a $U(\fg)$-module homomorphism, hence a $U_\chi(\fg)$-module homomorphism. It is surjective as $N$ is irreducible.
	
	($\impliedby$) By the definitions of $\Psi_{\chi}$ and $Z_\chi^r(P,\lambda)$, $M=(U_\chi^{[r]}(G)\otimes_{D} P)\otimes_{\tiny U_\chi(\fg)} N$ and $Z_\chi^r(P,\lambda)=(U_\chi^{[r]}(G)\otimes_{D} P)\otimes_{\tiny U_\chi(\fg)} Z_\chi(\lambda)$. The result then follows from the functoriality of the tensor product and the irreducibility of $M$.
\end{proof}

\subsection{Consequences}
\label{s2.3}

From now on, let us make the following assumptions (see Chapter 6 in \cite{Jan2} for more details):

(H1) The derived group of $G$ is simply-connected;

(H2) The prime $p$ is good for $G$; and

(H3) There is a non-degenerate $G$-invariant bilinear form on $\fg$.

In particular, (H3) gives rise to an isomorphism of $G$-modules $\fg\to\fg^{*}$. This allows us to transfer properties of elements of $\fg$ to properties of elements of $\fg^{*}$. For example, we say that $\chi\in\fg^{*}$ is {\bf semisimple} if the corresponding element $x\in\fg$ is semisimple (in fact this is equivalent to the requirement that $g\cdot\chi(\fn^{+}\oplus\fn^{-})=0$ for some $g\in G$, under the coadjoint action). Similarly, we say that $\chi\in\fg^{*}$ is {\bf nilpotent} if the corresponding element $x\in\fg$ is nilpotent (this is equivalent to the requirement that $g\cdot\chi(\fb)=0$ for some $g\in G$, under the coadjoint action).

Furthermore, we say that $x\in\fg$ is {\bf regular} if $\dim(C_G(x))=\dim(\fh)$, where $C_G(x)\coloneqq\{g\in G\,\vert g\cdot x=x\}$. We hence say that $\chi\in\fg^{*}$ is {\bf regular} if the corresponding $x\in\fg$ is regular - this is equivalent to the requirement that $\dim(C_G(\chi))=\dim(\fh)$, where $C_G(\chi)\coloneqq\{g\in G\,\vert g\cdot \chi=\chi\}$.

With these definitions in mind, we get the following proposition.

\begin{theorem}\label{regular}
	Let $M$ be an irreducible $U^{[r]}_\chi(G)$-module, for $\chi\in\fg^{*}$, and let $P$ be the unique (up to isomorphism) irreducible $\Di(G_{r})$-submodule of $M$.  The following results hold.
	\begin{enumerate}
		\item There exists $\lambda\in\Lambda_\chi$ such that $M$ is an irreducible quotient of $Z_\chi^r(P,\lambda)$.
		\item If $\chi$ is regular, then there exists  $P\in\Irr(\Di(G_{r}))$ and $\lambda\in\Lambda_\chi$ such that $M\cong Z_\chi^r(P,\lambda)$.
		\item If $\chi$ is regular semisimple then $Z_\chi^r(P,\lambda)\cong Z_\chi^r(\widetilde{P},\mu)$ if and only if $P\cong\widetilde{P}$ and $\lambda=\mu$.
		\item If $\chi$ is regular nilpotent and $\chi(\ve_{-\alpha})\neq 0$ for all $\alpha\in\Pi$, then $Z_\chi^r(P,\lambda)\cong Z_\chi^r(\widetilde{P},\mu)$ if and only if $P\cong\widetilde{P}$ and $\lambda\in W_\bullet \mu$, where $W$ is the Weyl group of $\Phi$ and $\bullet$ represents the dot-action.
	\end{enumerate}
\end{theorem}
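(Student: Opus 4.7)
The plan is to reduce each of the four statements to the corresponding classical statement about baby Verma modules $Z_\chi(\lambda)$ via the bijection $\Psi_\chi$ of Corollary~\ref{chibij} and the quotient correspondence of Proposition~\ref{quotCorresp}. Write $\Psi_\chi(M) = (P, N)$, where $N = \Hom_{G_r}(P,M) \in \Irr(U_\chi(\fg))$.

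For part (1), every irreducible $N \in \Irr(U_\chi(\fg))$ is a quotient of some baby Verma module $Z_\chi(\lambda)$ with $\lambda \in \Lambda_\chi$ (this is classical; see \cite{Jan}). Applying Proposition~\ref{quotCorresp} in the ``$\impliedby$'' direction with this $\lambda$ and the same $P$ gives that $M$ is an irreducible quotient of $Z_\chi^r(P,\lambda)$. This simultaneously pins down the ``$P$'' appearing in the teenage Verma module to be the unique irreducible $\Di(G_r)$-submodule of $M$, a sharpening of Proposition~\ref{tVm}.

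For part (2), under hypotheses (H1)--(H3) and for regular $\chi \in \fg^{*}$, every baby Verma module $Z_\chi(\lambda)$ is irreducible by Premet's theorem together with a dimension count (see \cite{Jan2}, Chapter 6), because $\dim Z_\chi(\lambda) = p^{\dim \fn^{-}} = p^{\dim(G\cdot\chi)/2}$ already attains the minimal dimension forced by Premet's bound. Hence the irreducible quotient $N$ produced in (1) equals $Z_\chi(\lambda)$. By Lemma~\ref{BVCorresp}, $\Psi_\chi(Z_\chi^r(P,\lambda)) = (P, Z_\chi(\lambda)) = (P,N) = \Psi_\chi(M)$, and the injectivity of $\Psi_\chi$ in Corollary~\ref{chibij} forces $M \cong Z_\chi^r(P,\lambda)$. (This implicitly uses that $Z_\chi^r(P,\lambda) = P \otimes_\bK Z_\chi(\lambda)$ is already irreducible under the inverse map of the bijection once $Z_\chi(\lambda)$ is.)

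For parts (3) and (4), the bijection $\Psi_\chi$ reduces the isomorphism problem for teenage Verma modules to the corresponding problem for baby Verma modules: by Lemma~\ref{BVCorresp}, $Z_\chi^r(P,\lambda) \cong Z_\chi^r(\widetilde P,\mu)$ as $U^{[r]}_\chi(G)$-modules if and only if $(P, Z_\chi(\lambda)) = (\widetilde P, Z_\chi(\mu))$, i.e.\ $P = \widetilde P$ and $Z_\chi(\lambda) \cong Z_\chi(\mu)$ as $U_\chi(\fg)$-modules. The classification of baby Verma modules up to isomorphism then completes the proof: for regular semisimple $\chi$, $Z_\chi(\lambda) \cong Z_\chi(\mu)$ forces $\lambda = \mu$ (their $\fh$-weight spaces distinguish them since $\chi|_\fh$ can be conjugated to a regular element and the action of $\fh$ on the highest weight line pins down $\lambda$); for regular nilpotent $\chi$ with $\chi(\ve_{-\alpha}) \neq 0$ for each simple $\alpha$, the isomorphism class is governed precisely by the $W_\bullet$-orbit of $\lambda$ (this is due to Friedlander--Parshall, see \cite{Jan2}). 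Assuming these two classical facts, the statements in (3) and (4) follow immediately.

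The main obstacle is really notational rather than mathematical: one must carefully unravel the bijection $\Psi_\chi$ and ensure that Lemma~\ref{BVCorresp} identifies the baby Verma module $Z_\chi(\lambda)$ (and not some other module carrying the same weight $\lambda$) with $\Hom_{G_r}(P, Z_\chi^r(P,\lambda))$ as a $U_\chi(\fg)$-module, so that classical isomorphism statements for baby Verma modules transfer cleanly. All substantive work is contained in Proposition~\ref{quotCorresp}, Lemma~\ref{BVCorresp}, and the cited results on baby Verma modules in \cite{Jan2}.
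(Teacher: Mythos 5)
Your proposal is correct and follows essentially the same route as the paper: reduce every claim to the corresponding statement about baby Verma modules via $\Psi_\chi$, Lemma~\ref{BVCorresp}, and Proposition~\ref{quotCorresp}, then import the classical isomorphism results for $Z_\chi(\lambda)$. The only cosmetic difference is in part (1) (the paper starts from Proposition~\ref{tVm} and shows via Frobenius reciprocity that the $\Di(G_r)$-module appearing there must already be $P$, whereas you invoke $\Psi_\chi$ and then lift with Proposition~\ref{quotCorresp}) and in part (2) (the paper counts dimensions directly at the teenage Verma level rather than first arguing $Z_\chi(\lambda)$ is irreducible), but the underlying facts and lemmas invoked are identical.
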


\begin{proof}
	(1) By above, there exists $Q\in\Irr(\Di(G_{r}))$ and $\lambda\in\Lambda_\chi$ such that $M$ is an irreducible quotient of $Z_\chi^r(Q,\lambda)$. Frobenius reciprocity then shows that 
	$$\Hom_{\tiny U^{[r]}_\chi(G)}(Z_\chi^r(Q,\lambda),M)\cong\Hom_{\tiny \widehat{U^{[r]}_\chi(B)}}(Q\otimes_{\bK}\bK_\lambda,M).$$
	
	In particular, as $M\neq 0$, the $\Di(G_{r})$-module $Q\subset Z_\chi^r(Q,\lambda)$ is not in the kernel of the surjection $\pi:Z_\chi^r(Q,\lambda)\twoheadrightarrow M$. Hence, the surjection restricts to a $\Di(G_{r})$-isomorphism $Q\to\pi(Q)$, so $Q$ is an irreducible $\Di(G_{r})$-submodule of $M$. As a result, $Q\cong P$, and we can say that $M$ is an irreducible quotient of $Z_\chi^r(P,\lambda)$ for some $\lambda\in\Lambda_\chi$.
	
	(2) The bijection $\Psi_{\chi}$ sends $M$ to the pair $(P,N)$ for some $N\in\Irr(U_\chi(\fg))$, and $\dim(M)=\dim(P)\dim(N)$. Since $\chi$ is regular, $\dim(N)=p^{\dim(\fn^{-})}$. 
	
	However, by (1), $M$ is an irreducible quotient of $Z_\chi^r(P,\lambda)$ for some $\lambda\in\Lambda_\chi$. Furthermore, $\dim(Z_\chi^r(P,\lambda))=p^{\dim(\fn^{-})}\dim(P)$. Hence, $M\cong Z_\chi^r(P,\lambda)$.
	
	(3) Suppose $Z_\chi^r(P,\lambda)\cong Z_\chi^r(\widetilde{P},\mu)$. The $U^{[r]}_\chi(G)$-module $Z_\chi^r(P,\lambda)$ is an irreducible module containing $P$, while $Z_\chi^r(\widetilde{P},\mu)$ is an irreducible $U^{[r]}_\chi(G)$-module containing $\widetilde{P}$. Since each irreducible $U^{[r]}_\chi(G)$-module contains a unique irreducible $\Di(G_{r})$-submodule, we obtain that $P$ and $\widetilde{P}$ are isomorphic $\Di(G_{r})$-modules.
	
	Hence, $$\Hom_{G_{r}}(P,Z_\chi^r(P,\lambda))\cong\Hom_{G_{r}}(\widetilde{P},Z_\chi^r(\widetilde{P},\mu)),$$
	and so $$Z_\chi(\lambda)\cong Z_\chi(\mu).$$
	
	By \cite[B.10]{Jan}, $\lambda=\mu$.
	
	(4) As in (3), if $Z_\chi^r(P,\lambda)\cong Z_\chi^r(\widetilde{P},\mu)$ then $Z_\chi(\lambda)\cong Z_\chi(\mu)$. Hence, by \cite[Proposition 10.5]{Jan2}, $\lambda\in W_\bullet\mu +pX$.
	
\end{proof}

Since all irreducible $U^{[r]}(G)$-modules have finite dimension, we can determine the maximal dimension of an irreducible $U^{[r]}(G)$-module, $\sup\{\dim(M)\,\vert\,M\in\Irr(U^{[r]}(G))\}$.

\begin{cor}\label{maxdim}
	The maximal dimension of an irreducible $U^{[r]}(G)$-module is $p^{(r+1)\dim(\fn^{-})}$, and it is attained.
\end{cor}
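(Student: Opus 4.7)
The plan is to combine the bijection of Corollary~\ref{chibij} with two classical maximal-dimension bounds, one for $\Di(G_{r})$ and one for $U_\chi(\fg)$. First I would observe that every irreducible $U^{[r]}(G)$-module is an irreducible $U^{[r]}_\chi(G)$-module for some $\chi\in\fg^{*}$, and that Corollary~\ref{chibij} then gives a decomposition $M\cong P\otimes_{\bK}N$ with $P\in\Irr(\Di(G_{r}))$ and $N\in\Irr(U_\chi(\fg))$. In particular $\dim(M)=\dim(P)\dim(N)$, which reduces the task to bounding each factor separately.

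For the first factor I would invoke Steinberg's classical result (\cite[II.3.18, II.10.1]{Jan3}): the irreducible $\Di(G_{r})$-module of largest dimension is the Steinberg module $L_r((p^{r}-1)\rho)$, of dimension $p^{r\dim(\fn^{-})}$. For the second factor, the Kac--Weisfeiler conjecture, proved by Premet under (H1)--(H3), gives $\dim(N)\leq p^{\dim(\fn^{-})}$ for every $N\in\Irr(U_\chi(\fg))$ and every $\chi$. Multiplying these two bounds yields $\dim(M)\leq p^{(r+1)\dim(\fn^{-})}$.

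To show that this bound is attained, I would choose $\chi\in\fg^{*}$ to be regular semisimple (so that $\Lambda_\chi$ is non-empty and, by Theorem~\ref{regular}(2), every irreducible $U^{[r]}_\chi(G)$-module is a teenage Verma module), take $P$ to be the Steinberg module of $\Di(G_{r})$, and fix any $\lambda\in\Lambda_\chi$. The teenage Verma module $Z^r_\chi(P,\lambda)$ is then irreducible, with
\[
\dim Z^r_\chi(P,\lambda) \;=\; p^{\dim(\fn^{-})}\dim(P) \;=\; p^{(r+1)\dim(\fn^{-})},
\]
realising the bound.

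I do not anticipate any substantial obstacle: the essential structural work has been done in Corollary~\ref{chibij} and Theorem~\ref{regular}, and the remaining ingredients are two well-known dimension bounds from the classical theory of $G_r$-modules and reduced enveloping algebras.
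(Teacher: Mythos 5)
Your proof is correct and, at its core, follows the same route as the paper: both arguments reduce to the factorisation $\dim(M)=\dim(P)\dim(N)$ (the paper's displayed bound $\dim Z^r_\chi(P,\lambda)=p^{\dim(\fn^-)}\dim(P)$ is exactly this, since $p^{\dim(\fn^-)}$ is the uniform bound on $\dim(N)$), then cap $\dim(P)$ by the Steinberg dimension $p^{r\dim(\fn^-)}$, and finally exhibit attainment by taking $P$ to be the Steinberg module and $\chi$ regular so the teenage Verma module is already irreducible.

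One attribution should be corrected: the bound $\dim(N)\le p^{\dim(\fn^-)}$ for $N\in\Irr(U_\chi(\fg))$ is \emph{not} the Kac--Weisfeiler conjecture proved by Premet. Premet's theorem (see Corollary~\ref{prem}) is the divisibility statement $p^{\dim(G\cdot\chi)/2}\mid\dim(N)$, i.e.\ a lower bound. The upper bound you want is the elementary one: every irreducible $U_\chi(\fg)$-module is a quotient of a baby Verma module $Z_\chi(\lambda)$ of dimension $p^{\dim(\fn^-)}$; this requires no assumptions beyond conjugating $\chi$ so that $\chi(\fn^+)=0$, and in particular does not need (H1)--(H3). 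This is precisely the bound the paper packages into the teenage Verma module via Proposition~\ref{tVm}. With that citation replaced, your argument matches the paper's.
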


\begin{proof}
	Since every irreducible $U^{[r]}(G)$-module is an irreducible quotient of $Z^r_\chi(P,\lambda)$ for some $\chi\in\fg^{*}$, $\lambda\in\Lambda_\chi$ and irreducible $\Di(G_r)$-module $P$, and since the dimension of $Z^r_\chi(P,\lambda)$ depends only on $P$, the maximal dimension of an irreducible $U^{[r]}(G)$-module is at most $$\max_{\tiny P\in\Irr(\Di(G_{r}))}\{\dim(Z^r_\chi(P,\lambda))\}=\max_{\tiny P\in\Irr(\Di(G_{r}))}\{(p^{\dim(\fn^{-})}\dim(P))\}.$$ The maximal dimension of an irreducible $\Di(G_{r})$-module is $p^{r\dim(\fn^{-})}$, coming from the Steinberg weight $St$. In particular, if we choose $P=L_r(St)$ and $\chi$ regular, then $Z^r_\chi(P,\lambda)$ is an irreducible $U^{[r]}(G)$-module of dimension $p^{(r+1)\dim(\fn^{-})}$, and the result follows.
\end{proof}

Recall further that, given $x\in\fg$, there exist $x_s,x_n\in\fg$ such that $x=x_s+x_n$, $x_s$ is semisimple in $\fg$, $x_n$ is nilpotent in $\fg$ and $[x_s,x_n]=0$. We call $x=x_s+x_n$ a {\bf Jordan decomposition} of $x$. If, under the $G$-module isomorphism $\fg\to\fg^{*}$, $x$ maps to $\chi$, $x_s$ maps to $\chi_s$ and $x_n$ maps to $\chi_n$, we call $\chi=\chi_s+\chi_n$ a Jordan decomposition of $\chi$.

Given $\chi\in\fg^{*}$, we define $\fc_{\fg}(\chi)\coloneqq\{y\in \fg\,\vert\,\chi([\fg,y])=0\}$. Under our assumptions, $C_G(\chi_s)$ is a Levi subgroup of $G$ with Lie algebra $\fc_{\fg}(\chi_s)$ (see \cite[Lemma 3.2]{BG}). Hence, there exists a parabolic subgroup $P_{\chi_s}$ of $G$ which is a semi-direct product of $C_G(\chi_s)$ with its unipotent radical $U_{P_{\chi_s}}$. Letting $\fu=\Lie(U_{P_{\chi_s}})$ and $\fp=\Lie(P_{\chi_s})$, we get that $\fp=\fc_\fg(\chi_s)\oplus\fu$. Work of Friedlander and Parshall in \cite{FP} shows that there is a equivalence of categories
$$\modu(U_\chi(\fg))\longleftrightarrow\modu(U_\chi(\fc_{\fg}(\chi_s)))$$
which sends $N\in\modu(U_\chi(\fg))$ to the fixed point set $N^{\fu}\in\modu(U_\chi(\fc_{\fg}(\chi_s)))$, and sends $V\in \modu(U_\chi(\fc_{\fg}(\chi_s)))$ to $U_\chi(\fg)\otimes_{U_\chi(\fp)}V\in \modu(U_\chi(\fg))$, where $\fu$ acts on $V$ as $0$.

Furthermore, letting $\mu=\chi\vert_{\fc_{\fg}(\chi_s)}$, there is another equivalence of categories
$$\modu(U_\mu(\fc_{\fg}(\chi_s))\longleftrightarrow \modu(U_{\mu_n}(\fc_{\fg}(\chi_s)))$$
which sends $V\in \modu(U_\mu(\fc_{\fg}(\chi_s)))$ to $V\otimes W\in \modu(U_{\mu_n}(\fc_{\fg}(\chi_s)))$ and $V\in \modu(U_{\mu_n}(\fc_{\fg}(\chi_s)))$ to $V\otimes W^{*}\in \modu(U_\mu(\fc_{\fg}(\chi_s)))$, where $W$ is a irreducible $U_{\mu_s}(\fc_{\fg}(\chi_s)/[\fc_{\fg}(\chi_s),\fc_{\fg}(\chi_s)])$-module (necessarily 1-dimensional) viewed as a $\fg$-module.

Both of these equivalences of categories send baby Verma modules to baby Verma modules.

\begin{cor}
	Keep the notation from the preceding paragraph. There is a bijection
	$$\Psi_\chi:\underline{\Irr}(U^{[r]}_\chi(G))\to \underline{\Irr}(\Di(G_{r}))\times \underline{\Irr}(U_{\mu_n}(\fc_{\fg}(\chi_s)))$$
	which sends $M$ to $(P,\Hom_{\tiny G_{r}}(P,M)^{\fu}\otimes W^{*})$, where $P$ is the unique (up to isomorphism) irreducible $\Di(G_{r})$-submodule of $M$. The inverse map sends $(P,V)$ to $(U^{[r]}_\chi(G)\otimes_{\tiny \Di(G_{r})} P)\otimes_{U_\chi(\fp)} (V\otimes W)\cong P\otimes_{\bK} (U_\chi(\fg)\otimes_{U_\chi(\fp)}(V\otimes W))$.
\end{cor}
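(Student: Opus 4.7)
The plan is to chain together three bijections: the bijection $\Psi_\chi$ of Corollary~\ref{chibij}, and the two Friedlander--Parshall equivalences of categories recalled in the preceding paragraph, each restricted to irreducible modules. Both Friedlander--Parshall equivalences are equivalences of abelian categories, so they automatically preserve irreducibility. Explicitly, I would factor the desired bijection as
\begin{align*}
\Irr(U^{[r]}_\chi(G)) &\longleftrightarrow \Irr(\Di(G_{r})) \times \Irr(U_\chi(\fg)) \\
&\longleftrightarrow \Irr(\Di(G_{r})) \times \Irr(U_\mu(\fc_\fg(\chi))) \\
&\longleftrightarrow \Irr(\Di(G_{r})) \times \Irr(U_{\mu_n}(\fc_\fg(\chi))),
\end{align*}
where the first line is Corollary~\ref{chibij}, the second applies $N \mapsto N^{\fu}$ (with inverse $V \mapsto U_\chi(\fg) \otimes_{U_\chi(\fp)} V$, and $\fu$ acting trivially on $V$) on the right factor, and the third applies $V \mapsto V \otimes W^{*}$ (with inverse $V \mapsto V \otimes W$) on the right factor.

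Next I would trace the composition explicitly. In the forward direction, $M \mapsto (P, \Hom_{G_{r}}(P, M)) \mapsto (P, \Hom_{G_{r}}(P, M)^{\fu}) \mapsto (P, \Hom_{G_{r}}(P, M)^{\fu} \otimes W^{*})$, matching the stated formula. In the backward direction, starting from $(P, V)$ we first obtain the irreducible $U_\mu(\fc_\fg(\chi))$-module $V \otimes W$, then the irreducible $U_\chi(\fg)$-module $U_\chi(\fg) \otimes_{U_\chi(\fp)} (V \otimes W)$, and finally, by Corollary~\ref{chibij}, the irreducible $U^{[r]}_\chi(G)$-module
\[
(U^{[r]}_\chi(G) \otimes_D P) \otimes_{U_\chi(\fg)} \bigl(U_\chi(\fg) \otimes_{U_\chi(\fp)} (V \otimes W)\bigr) \cong (U^{[r]}_\chi(G) \otimes_D P) \otimes_{U_\chi(\fp)} (V \otimes W),
\]
the last isomorphism being the standard tensor-cancellation identity. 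The further isomorphism with $P \otimes_\bK \bigl(U_\chi(\fg) \otimes_{U_\chi(\fp)} (V \otimes W)\bigr)$ is then an instance of the isomorphism $(U^{[r]}_\chi(G) \otimes_D P) \otimes_{U_\chi(\fg)} N \cong P \otimes_\bK N$ from Corollary~\ref{chibij}, applied to $N = U_\chi(\fg) \otimes_{U_\chi(\fp)} (V \otimes W)$.

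I do not anticipate any serious obstacle: once the three bijections are lined up, everything reduces to definition-chasing and associativity of tensor products. The only care required is to note explicitly that the Friedlander--Parshall equivalences, although stated on the module categories $\modu(-)$, restrict to bijections on $\Irr(-)$; this is automatic for an equivalence of abelian categories. A secondary sanity check is that $\chi|_{\fp}$ restricted through the Levi decomposition $\fp = \fc_\fg(\chi) \oplus \fu$ agrees with $\mu$ on $\fc_\fg(\chi)$ and vanishes on $\fu$, so that the relevant reduced enveloping algebras in the compositions really do match up as written.
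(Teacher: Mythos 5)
Your proposal is correct, and it is essentially the intended argument: the corollary is stated in the paper without proof precisely because it is the composite of the bijection from Corollary~\ref{chibij} with the two Friedlander--Parshall equivalences recalled in the preceding paragraph, together with the tensor-cancellation isomorphism you invoke. Your observation that equivalences of abelian categories automatically restrict to bijections on isomorphism classes of irreducibles is exactly the point that makes the composition work, and the unwinding of the formulas in both directions is correct.

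One thing worth making explicit rather than passing over silently: your third step says ``$V \mapsto V \otimes W^{*}$ (with inverse $V \mapsto V \otimes W$)'', which is the \emph{reverse} of the directions printed in the preceding paragraph of the paper (there, $\modu(U_\mu(\fc_\fg(\chi))) \to \modu(U_{\mu_n}(\fc_\fg(\chi)))$ is written as $V \mapsto V \otimes W$). Your version is the one that is consistent both with the corollary's stated formulas and with the $p$-character arithmetic: since $W$ is a $U_{\mu_s}$-module, $W^{*}$ has $p$-character $-\mu_s$, hence for $V \in \modu(U_\mu(\fc_\fg(\chi)))$ the tensor product $V \otimes W^{*}$ has $p$-character $\mu - \mu_s = \mu_n$, not $V \otimes W$. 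So the preceding paragraph of the paper has $W$ and $W^{*}$ transposed, and your proof quietly corrects this. You should flag the correction explicitly rather than leaving the reader to wonder whether you misquoted the source; as written, a reader checking your step three against the paragraph you cite would see a mismatch.
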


In particular, this result means that to study the irreducible $U^{[r]}_\chi(G)$-modules, one may always assume that $\chi\vert_{\fc_{\fg}(\chi_s)}$ is nilpotent, and hence that $\chi$ vanishes on $\fb\cap\fc_{\fg}(\chi_s)$.

Recall that we say that $\chi\in\fg^{*}$ has {\bf standard Levi form} if $\chi(\fb)=0$ and there exists a subset $I\subseteq\Pi$ with 
$\chi(\ve_{-\alpha})= 0$ if and only if $\alpha\in \Phi^{+}\setminus I$.

\begin{dfn}
	We say that $\chi\in\fg^{*}$ has {\bf almost standard Levi form} if $(\chi\vert_{\fc_{\fg}(\chi_s)})_n$ has standard Levi form.
\end{dfn}

\begin{prop}
	Suppose that $\chi\in\fg^{*}$ has almost standard Levi form. Let $P\in\Irr(\Di(G_{r}))$ and $\lambda\in\Lambda_\chi$. Then the $U^{[r]}_\chi(G)$-module $Z_\chi^r(P,\lambda)$ has a unique irreducible quotient. 
\end{prop}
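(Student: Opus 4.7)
The plan is to reduce the statement, via the machinery already developed in Section~\ref{s2.1} and Section~\ref{s2.2}, to the well-known fact that baby Verma modules $Z_\chi(\lambda)$ for $\chi$ of standard Levi form have a unique irreducible quotient. The first step is to apply Proposition~\ref{quotCorresp}: any irreducible quotient $M$ of $Z_\chi^r(P,\lambda)$ must contain $P$ as its unique irreducible $\Di(G_r)$-submodule (since $P \subseteq Z_\chi^r(P,\lambda)$ and the surjection $\pi:Z_\chi^r(P,\lambda)\twoheadrightarrow M$ is $\Di(G_r)$-linear and non-zero on $P$), so $\Psi_\chi(M)=(P,N)$ for some $N\in\Irr(U_\chi(\fg))$, and Proposition~\ref{quotCorresp} identifies such $M$ with the irreducible quotients of $Z_\chi(\lambda)$. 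Thus it suffices to show that $Z_\chi(\lambda)$ has a unique irreducible quotient as a $U_\chi(\fg)$-module.

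The second step is to transport $Z_\chi(\lambda)$ through the two equivalences of categories recalled immediately before the definition of almost standard Levi form. Since both equivalences are stated to send baby Verma modules to baby Verma modules, the Friedlander--Parshall equivalence $\modu(U_\chi(\fg))\simeq\modu(U_\mu(\fc_\fg(\chi)))$ carries $Z_\chi(\lambda)$ to a baby Verma module $Z_\mu(\lambda')$ for the reductive Lie algebra $\fc_\fg(\chi)$, with $\mu=\chi|_{\fc_\fg(\chi)}$. The second equivalence $\modu(U_\mu(\fc_\fg(\chi)))\simeq\modu(U_{\mu_n}(\fc_\fg(\chi)))$ then sends this to a baby Verma module $Z_{\mu_n}(\lambda'')$ for $\fc_\fg(\chi)$ at the $p$-character $\mu_n=(\chi|_{\fc_\fg(\chi)})_n$.

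By the hypothesis that $\chi$ has almost standard Levi form, $\mu_n$ has standard Levi form as an element of $\fc_\fg(\chi)^*$, so by Jantzen \cite[Proposition 10.2]{Jan2} the baby Verma module $Z_{\mu_n}(\lambda'')$ has a unique irreducible quotient (its head is simple). Because equivalences of categories preserve the property of having a unique maximal proper submodule, pulling back through the two equivalences shows that $Z_\chi(\lambda)$ has a unique irreducible quotient. Combining with the first step via Proposition~\ref{quotCorresp} yields the desired conclusion for $Z_\chi^r(P,\lambda)$.

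The main obstacle I anticipate is a bookkeeping one rather than a genuine difficulty: one must track how the weight $\lambda$ transforms under the two equivalences (in particular, the twist by the one-dimensional module $W$ attached to $\mu_s$), and one must check that the version of the standard Levi head-simplicity result one cites is formulated for the appropriate reductive Levi subalgebra $\fc_\fg(\chi)$ rather than only for $\fg$ itself. Both checks are essentially contained in \cite{FP} and \cite{Jan2}, and neither changes the structure of the argument.
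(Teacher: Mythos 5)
Your proof is correct and follows essentially the same route as the paper's: reduce via Proposition~\ref{quotCorresp} to the claim that $Z_\chi(\lambda)$ has a unique irreducible quotient, transport through the composed Friedlander--Parshall and $W$-twist equivalences (both sending baby Verma modules to baby Verma modules) down to $\fc_\fg(\chi)$ at $\mu_n$, and invoke the standard-Levi-form result. The paper phrases this slightly more tersely, citing the composite equivalence in one step, but the structure and the key ingredients are identical.
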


\begin{proof}
	Since $\mu_n\coloneqq(\chi\vert_{\fc_{\fg}(\chi_s)})_n$ has standard Levi form, each $Z_{\mu_n}(\tau)$ for $\tau\in\Lambda_{\mu_n}$ has a unique irreducible quotient. Since there is an equivalence of categories between $\modu(U_{\mu_n}(\fc_{\fg}(\chi_s)))$ and $\modu(U_\chi(\fg))$ which sends baby Verma modules to baby Verma modules, it follows that each $Z_\chi(\lambda)$ has a unique irreducible quotient. The result then follows from Proposition~\ref{quotCorresp}.
\end{proof}

If $\chi\in\fg^{*}$ has almost standard Levi form, we shall write $L_\chi^r(P,\lambda)$ for the unique irreducible quotient of $Z_\chi^r(P,\lambda)$. Proposition 10.8 in \cite{Jan2} gives the following isomorphism condition on these modules, where $W_I$ is the subgroup of the Weyl group generated by simple reflections corresponding to simple roots in $I$.

\begin{cor}
	Suppose that $\chi\in\fg^{*}$ has almost standard Levi form corresponding to the subset $I$ of the simple roots of $\fc_{\fg}(\chi_s)$. Let $P,Q\in\Irr(\Di(G_{r}))$ and $\lambda,\widetilde{\lambda}\in\Lambda_\chi$. Then $L_\chi^r(P,\lambda)\cong L_\chi^r(Q,\widetilde{\lambda})$ if and only if $P\cong Q$ and $\widetilde{\lambda}\in W_{I\bullet}\lambda$.
\end{cor}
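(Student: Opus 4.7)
The plan is to push the question through the bijection $\Psi_\chi$ of Corollary~\ref{chibij} so that it becomes a statement about irreducible quotients of ordinary baby Verma modules, and then invoke the cited Proposition 10.8 in \cite{Jan2}.

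First I would identify the image of $L_\chi^r(P,\lambda)$ under $\Psi_\chi$. By Proposition~\ref{quotCorresp}, $\Psi_\chi$ sends an irreducible quotient of $Z_\chi^r(P,\lambda)$ to a pair $(P,N)$ in which $N$ is an irreducible quotient of $Z_\chi(\lambda)$. The paragraph preceding the Definition of almost standard Levi form, together with the hypothesis on $\chi$, shows that $Z_\chi(\lambda)$ has a unique irreducible quotient, which I will write as $L_\chi(\lambda)$; hence $\Psi_\chi(L_\chi^r(P,\lambda)) = (P, L_\chi(\lambda))$. Since $\Psi_\chi$ is a bijection it then follows that
\[
L_\chi^r(P,\lambda) \cong L_\chi^r(Q,\widetilde\lambda) \iff P \cong Q \text{ in } \Irr(\Di(G_{r})) \text{ and } L_\chi(\lambda) \cong L_\chi(\widetilde\lambda).
\]

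Next I would reduce the second condition to the standard Levi form setting. The Friedlander--Parshall equivalence $\modu(U_\chi(\fg)) \leftrightarrow \modu(U_\mu(\fc_\fg(\chi)))$ and the tensor equivalence $\modu(U_\mu(\fc_\fg(\chi))) \leftrightarrow \modu(U_{\mu_n}(\fc_\fg(\chi)))$, described in the excerpt, both send baby Verma modules to baby Verma modules, and therefore send unique irreducible quotients to unique irreducible quotients. Tracking the weight parameter through these equivalences, $L_\chi(\lambda)$ corresponds to $L_{\mu_n}(\lambda')$ where $\lambda'$ is obtained from $\lambda$ by restriction to the Cartan of $\fc_\fg(\chi)$ and a shift by the weight of the one-dimensional module $W$; crucially this shift is the same for every $\lambda \in \Lambda_\chi$. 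Hence $L_\chi(\lambda) \cong L_\chi(\widetilde\lambda)$ if and only if $L_{\mu_n}(\lambda') \cong L_{\mu_n}(\widetilde\lambda')$.

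Since $\mu_n$ has standard Levi form corresponding to the subset $I$ of simple roots of $\fc_\fg(\chi)$, Proposition 10.8 in \cite{Jan2} gives $L_{\mu_n}(\lambda') \cong L_{\mu_n}(\widetilde\lambda')$ if and only if $\widetilde\lambda' \in W_I\bullet\lambda'$. Because the weight correspondence between $\Lambda_\chi$ and the parameter set for $U_{\mu_n}(\fc_\fg(\chi))$ is a simple affine shift, the $W_I$ dot-orbit condition on the shifted weights is equivalent to the $W_I$ dot-orbit condition $\widetilde\lambda \in W_I\bullet\lambda$ on the original weights, completing the argument.

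The main obstacle is the bookkeeping in the middle paragraph: one needs to verify carefully that the chain of equivalences actually carries $Z_\chi(\lambda)$ to $Z_{\mu_n}(\lambda')$ for a weight $\lambda'$ depending $W_I$-equivariantly (for the dot action) on $\lambda$, so that orbit-membership is preserved. This is essentially a weight calculation using that $\fh \subseteq \fc_\fg(\chi)$ and that $W$ is a one-dimensional $U_{\mu_s}(\fc_\fg(\chi)/[\fc_\fg(\chi),\fc_\fg(\chi)])$-module, but it is the step where a careless identification would give a wrong answer.
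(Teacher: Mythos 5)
Your proposal is correct and takes essentially the same route as the paper, which proves this corollary implicitly by pushing through the bijection $\Psi_\chi$ (using Proposition~\ref{quotCorresp} and the uniqueness of the irreducible $\Di(G_r)$-submodule) to reduce to the isomorphism question for $L_\chi(\lambda)$ and $L_\chi(\widetilde\lambda)$, then appealing to the two equivalences of categories and Proposition 10.8 of \cite{Jan2}. The weight bookkeeping you flag does go through because the weight of the one-dimensional module $W$, arising from the abelianization $\fc_\fg(\chi)/[\fc_\fg(\chi),\fc_\fg(\chi)]$, is $W_I$-invariant, and the half-sum correction $\rho-\rho_I$ relating the two dot actions is likewise $W_I$-invariant, so orbit membership is preserved.
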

%
%
%
%
%
%
%
%
%

\section{The Azumaya Locus of $U^{[r]}(G)$}
\label{s3}
\subsection{Azumaya and pseudo-Azumaya loci}
\label{s3.1}
Let $R$ be a $\bK$-algebra, where $\bK$ is an algebraically closed field (of arbitrary characteristic), which is module-finite over its centre $Z=Z(R)$. Suppose further that $Z$ is an affine $\bK$-algebra (i.e. $Z$ is finitely generated as a $\bK$-algebra). One can observe that these conditions guarantee the existence of a bound on the dimensions of irreducible $R$-modules.

These conditions further imply that $R$ is a PI ring, i.e. that there exists a (multilinear) $\bZ$-polynomial $f$ such that $f(r_1,\ldots,r_k)=0$ for all $r_1,\ldots,r_k\in R$. For $n\in\bN$, we define the polynomial $g_n$ as in Chapter 1.4 of \cite{Row2} (see Proposition 1.4.10 in particular). This is an $n^2$-normal polynomial ({\bf $n^2$-normal} meaning $g_n$ is linear and alternating in its first $n^2$ variables). We then say that $R$ has {\bf PI-degree} $m$ if $R$ satisfies all multilinear identities of $M_m(\bZ)$ (that is to say, all multilinear $\bZ$-polynomials which vanish on $M_m(\bZ)$) and $g_m(R)\coloneqq\{g_m(r_1,\ldots,r_k)\,\vert\,r_1,\ldots,r_k\in R\}$ is not the zero set. If $R$ has PI-degree $m$, then $g_m(r_1,\ldots,r_k)\in Z$ for all $r_1,\ldots,r_k\in R$.

We define, as in \cite{Row2}, the following sets:
$$\Spec_m(R)\coloneqq\{P\in\Spec(R)\,\vert\, g_m(R)\not\subseteq P\},\qquad \Spec_m(Z)\coloneqq\{Q\in\Spec(Z)\,\vert\, g_m(R)\not\subseteq Q\},$$
where $\Spec(R)$ is defined to be the set of prime ideals in $R$. One can check that, if $R$ has PI-degree $m$ and $P$ is a prime ideal of $R$, $\PIDeg(R)\geq\PIDeg(R/P)$ and this inequality is an equality precisely when $P\in\Spec_m(R)$.

Given a central subalgebra $C$ of $R$, we say, as in Definition 5.3.23 in \cite{Row}, that $R$ is {\bf Azumaya} over $C$ if 

(i) $R$ is a faithful and finitely generated projective $C$-module; and

(ii) the canonical map $R\otimes_C R^{op}\to \End_C(R)$, which sends $a\otimes b$ to the map $x\mapsto axb$, is a $\bK$-algebra isomorphism.

If $C=Z$, we will simply call $R$ an Azumaya algebra. We furthermore say that $R$ is Azumaya over $C$ {\bf of constant rank $t$} if $R_I$ is a free module of rank $t$ over $C_I$ for all prime ideals $I$ of $C$ \cite[Definition 2.12.21]{Row}. By Remark 1.8.36 in \cite{Row}, we observe that if $R$ is Azumaya over $C$ of constant rank $t$ then, for each prime ideal $I$ of $C$, $R_I$ is also Azumaya over $C_I$ of constant rank $t$. 


Given a prime ideal $Q$ in $Z$, we define $R_Q$ to be the localization of $R$ at the multiplicatively closed central subset $Z\setminus Q$. In other words, $R_Q\coloneqq\{rs^{-1}\,\vert\,r\in R, s\in Z\setminus Q\}$, where $r_1s_1^{-1}=r_2s_2^{-1}$ if and only if there exists $s\in Z\setminus Q$ such that $s(r_1s_2-r_2s_1)=0$. We denote by $Z_Q$ the usual localization of $R\setminus Q$ in $Z$. By \cite{Row2}, $Z_Q\subseteq Z(R_Q)$ with equality if $Z\setminus Q$ is regular in $R$ (i.e. for any $s\in Z\setminus Q$, $r\in R$, $sr=0$ implies $r=0$).

Note that Theorem 5.3.24 in \cite{Row} implies that if $R_Q$ is Azumaya over $Z_Q$ then $Z_Q=Z(R_Q)$. The following lemma follows from Section 5.3 in \cite{Row}.

\begin{lemma}
	$R_Q$ is Azumaya over $Z_Q$ if and only if $Z_Q=Z(R_Q)$ and $R_Q$ is Azumaya over its centre. Either of these conditions is satisfied if, for example, $Z\setminus Q$ is regular in $R$ and $R_Q$ is Azumaya over its centre.
\end{lemma}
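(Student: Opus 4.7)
The proof is essentially a direct unpacking of the two facts cited immediately before the lemma, so my plan is to split it into the biconditional and the sufficient condition and handle each by applying those results.

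For the forward direction of the biconditional, I would assume $R_Q$ is Azumaya over $Z_Q$ and invoke Theorem 5.3.24 in \cite{Row} (cited just before the lemma) to conclude $Z_Q = Z(R_Q)$. Once this equality holds, the statement that $R_Q$ is Azumaya over its centre is literally the same as the hypothesis that it is Azumaya over $Z_Q$. For the reverse direction, if $Z_Q = Z(R_Q)$ then ``Azumaya over its centre'' is the same statement as ``Azumaya over $Z_Q$'', so there is nothing further to check.

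For the sufficient condition, I would use the remark from \cite{Row2} quoted earlier in the subsection: when $Z\setminus Q$ is regular in $R$, the inclusion $Z_Q \subseteq Z(R_Q)$ is an equality. Combined with the hypothesis that $R_Q$ is Azumaya over its centre, this immediately gives that $R_Q$ is Azumaya over $Z_Q$, i.e.\ the first half of the biconditional holds.

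There is no real obstacle here; the lemma is stated to record the equivalence cleanly for later use in Subsection~\ref{s3.2}, and the only substantive input is the external result Theorem 5.3.24 of \cite{Row} together with the regularity statement from \cite{Row2}. The only thing worth being careful about is making sure that when one writes ``Azumaya over its centre'' in the backward direction, one is using the same centre $Z(R_Q)$ that appears in the hypothesis, which is why the equality $Z_Q = Z(R_Q)$ is needed as a genuinely separate ingredient rather than being implicit in the Azumaya property.
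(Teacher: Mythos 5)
Your proposal is correct and matches the paper's intent: the paper gives no explicit proof, stating only that the lemma ``follows from Section 5.3 in \cite{Row},'' and your argument is exactly the routine unpacking of Theorem 5.3.24 of \cite{Row} together with the regularity remark from \cite{Row2} that the paper is alluding to. You have also correctly isolated the one subtle point, namely that the equality $Z_Q = Z(R_Q)$ is a genuinely separate ingredient and not automatic from the Azumaya property over a central subalgebra.
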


The {\bf Azumaya locus} $\mathcal{A}_R$ of $R$ is hence defined to be the set of maximal ideals $\fm$ in $Z$ such that $R_\fm$ is an Azumaya algebra over $Z_\fm$. If $R$ is prime, this is precisely the definition of Azumaya locus given in \cite{BGo}.


We shall further define the {\bf pseudo-Azumaya locus} of $R$, $\mathcal{PA}_R$, as 
$$\mathcal{PA}_R\coloneqq\{\ann_Z(M)\,\vert\,M\,\mbox{an irreducible left}\, R\mbox{-module of maximal dimension}\}.$$
The next theorems shall show how the Azumaya and pseudo-Azumaya loci are connected.


\begin{theorem}\label{PIDeg}
	Let $R$ be a $\bK$-algebra, where $\bK$ is an algebraically closed field, which is module-finite over its centre $Z=Z(R)$, and assume that $Z$ is affine. Let $J(R)$ be the Jacobson radical of $R$. Then the following results hold.
	\begin{enumerate}
		\item The ring $R/J(R)$ has PI-degree $d$, where $d$ is the maximal dimension of an irreducible (left) $R$-module. 
		\item If $R$ has PI-degree $m$, then $m=d$ if and only if there exists a primitive ideal $A$ in $\Spec_m(R)$.
	\end{enumerate}
\end{theorem}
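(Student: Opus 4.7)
The plan is to combine the Jacobson radical description with Kaplansky's structure theorem for primitive PI rings. Since $R$ is finitely generated as a module over its affine centre $Z$, $R$ is a PI ring and for every primitive ideal $P$ of $R$ the quotient $R/P$ is simple Artinian. The centre of $R/P$ is a finitely generated $\bK$-algebra which is a field, so by the Nullstellensatz it equals $\bK$, and hence $R/P\cong M_{n_P}(\bK)$ for some positive integer $n_P$. Since every irreducible $R$-module factors through some primitive quotient $R/P$, and the unique simple $R/P$-module has dimension $n_P$, we deduce that $d=\max_P n_P$, and that this supremum is attained.

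For part (1), I would use $J(R)=\bigcap_{P\text{ primitive}} P$, which yields an embedding
\[
R/J(R)\hookrightarrow\prod_{P\text{ primitive}} R/P\cong\prod_P M_{n_P}(\bK).
\]
A multilinear identity is satisfied by a subdirect product if and only if it is satisfied by every factor, so $R/J(R)$ satisfies all multilinear identities of $M_d(\bZ)$ (because each $M_{n_P}(\bK)$ does, given that $n_P\leq d$). On the other hand, choosing a primitive $P_0$ with $n_{P_0}=d$, the surjection $R/J(R)\twoheadrightarrow R/P_0=M_d(\bK)$ ensures that $g_d$ does not vanish identically on $R/J(R)$, since it does not vanish on $M_d(\bK)$. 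Combining these gives $\PIDeg(R/J(R))=d$.

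For part (2), if $R$ has PI-degree $m$, then the quotient $R/J(R)$ has PI-degree at most $m$, and by (1) equal to $d$, so $d\leq m$. If $m=d$, then the maximal value $d$ of $n_P$ equals $m$; picking a primitive ideal $A$ with $n_A=m$ gives $\PIDeg(R/A)=m=\PIDeg(R)$, which by the characterization recalled just before the theorem is equivalent to $A\in\Spec_m(R)$. Conversely, a primitive $A\in\Spec_m(R)$ satisfies $\PIDeg(R/A)=m$, so $n_A=m$, and hence $d\geq m$; combined with $d\leq m$ this yields $m=d$.

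The main technical point to verify carefully is the identification $R/P\cong M_{n_P}(\bK)$ for every primitive $P$, which relies on reducing from Kaplansky's theorem (giving $R/P\cong M_{n_P}(D_P)$ with $D_P$ a division algebra finite over its centre) to the case $D_P=\bK$ via the Nullstellensatz argument; once this is in place, the rest is a clean application of the subdirect product principle for PI identities together with the stated numerical criterion for membership in $\Spec_m(R)$.
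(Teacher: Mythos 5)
Your proof is correct and follows essentially the same route as the paper: reduce each primitive quotient to $M_{n_P}(\bK)$, observe $d=\max_P n_P$, show $R/J(R)$ satisfies all multilinear identities of $M_d(\bZ)$ via the subdirect product over primitive ideals, check $g_d$ survives modulo $J(R)$ by projecting onto a maximal primitive quotient, and derive (2) from the $\Spec_m$ characterization. The only cosmetic difference is that you establish $R/P\cong M_{n_P}(\bK)$ via Kaplansky plus a Nullstellensatz argument on the centre of $R/P$, whereas the paper embeds $R/A$ directly into $\End_{\bK}(M)$ to get finite-dimensionality; both are standard and equivalent here.
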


\begin{proof}
	
	(1) Observe that for an irreducible $R$-module $M$ with annihilator $A=\ann_R(M)$, $R/A$ is a finite dimensional, simple algebra over $Z/\fm$, where $\fm=A\cap Z$. This holds because $M$ is a faithful $R/A$-module, so $R/A$ embeds in $\End_{\bK}(M)$. In particular, $R/A\cong M_{n_A}(\bK)$ by the algebraically closed nature of the field $\bK$, for some $n_A\in\bN$. Hence, every irreducible $R/A$-module has dimension $n_A$. In particular, 
	$$d=\max_{\tiny A\lhd R\,\, \mbox{primitive}}\{n_A\}.$$
	
	
	Furthermore, Kaplansky's Theorem tells us that, for a primitive ideal $A$ of $R$, the PI-degree of $R/A$ is also $n_A$.	Hence, for any primitive ideal $A$,
	$$\PIDeg(R/A)=n_A\leq d.$$
	In particular, this says that if $f$ is a multilinear identity of $M_d(\bZ)$ then $f(R)$ is a subset of all primitive ideals of $R$. Thus $R/J(R)$ satisfies all the multilinear identities of $M_d(\bZ)$.
	
	Also, if $M$ is an irreducible $R$-module of maximal dimension then $\PIDeg(R/\ann_R(M))=d$. Hence $g_d(R)\not\subseteq\ann_R(M)$, and thus $g_d(R)\not\subseteq J(R)$. So $g_d(R/J(R))\neq 0$.
	
	This precisely says that $R/J(R)$ has PI-degree $d$.
	
	(2) We know that $\PIDeg(R/\ann_R(M))=d$ when $M$ is an irreducible left $R$-module of maximal dimension. Thus, when $m=d$,  $\PIDeg(R)=\PIDeg(R/\ann_R(M))$ and so $\ann_R(M)\in\Spec_m(R)$.
	
	On the other hand, if there exists a primitive ideal $A\in\Spec_m(R)$ then 
	$$m=\PIDeg(R)=\PIDeg(R/A)\leq\PIDeg(R/J(R))\leq\PIDeg(R)$$
	and the result follows.
	%
\end{proof}

If $R$ has PI-degree $d$, the maximal dimension of an irreducible (left) $R$-module, then the pseudo-Azumaya locus $\mathcal{PA}_R$ is an open subset of $\Maxspec(Z)$. Using similar techniques to those used in the proof of Theorem~\ref{PIDeg}, the proof of this fact when $R$ is prime (found, for example, in Proposition III.1.1 and Lemma III.1.5 in \cite{BGo2}) easily generalises to this case.

Note that the assumptions of Theorem~\ref{PIDeg} guarantee that $R$ is a Jacobson ring, i.e. that every prime ideal is an intersection of primitive ideals. In particular, $J(R)$ is the intersection of all prime ideals in $R$. Hence, if $R$ is a prime ring then $R$ has PI degree $d$ and the Azumaya and pseudo-Azumaya loci coincide by the following theorem (noting that, over a prime ring, if $R_\fm$ is an Azumaya algebra then it must be of constant rank as $Z(R_\fm)=Z_\fm$ is local for all maximal ideals $\fm$ of $Z$ -- see also Chapter 13.7 in \cite{McR}). Note that Brown and Goodearl have already shown the prime case in \cite{BGo}, using similar techniques.

\begin{theorem}
	Let $R$ be a $\bK$-algebra, where $\bK$ is an algebraically closed field, which is module-finite over its centre $Z=Z(R)$, and assume that $Z$ is affine. Suppose that $R$ has PI-degree $d$, where $d$ is the maximum dimension of an irreducible (left) $R$-module. Furthermore, let $M$ be an irreducible (left) $R$-module, $A=\ann_R(M)$ and $\fm=\ann_Z(M)$. Then $\dim(M)=d$ if and only if $R_\fm$ is an Azumaya algebra of constant rank $d^2$.
\end{theorem}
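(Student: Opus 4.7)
The plan is to prove each direction separately, invoking the Artin--Procesi characterisation of Azumaya algebras for the forward implication and a residue-field specialisation for the reverse.

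For the forward direction, suppose $\dim(M) = d$. Jacobson density (as exploited in the proof of Theorem~\ref{PIDeg}) gives $R/A \cong M_d(\bK)$, which has PI-degree $d$, so $A \in \Spec_d(R)$; equivalently, $g_d(R) \not\subseteq A$, and since $g_d(R) \subseteq Z$, also $g_d(R) \not\subseteq \fm$. Choose $s \in g_d(R) \setminus \fm$: the image of $s$ is a unit in $R_\fm$, so the two-sided ideal generated by $g_d(R_\fm)$ is all of $R_\fm$. As $R_\fm$ continues to satisfy every multilinear identity of $M_d(\bZ)$, the Artin--Procesi theorem shows that $R_\fm$ is Azumaya of PI-degree $d$ over its centre. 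The lemma preceding Theorem~\ref{PIDeg} then upgrades this to Azumaya-ness of $R_\fm$ over $Z_\fm$ once $Z(R_\fm) = Z_\fm$ is in place. Finally, since $Z_\fm$ is local, the projective module $R_\fm$ is free, and comparing with the residue-field specialisation below pins down the rank as $d^2$.

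For the reverse direction, suppose $R_\fm$ is Azumaya over $Z_\fm$ of constant rank $d^2$. Schur's lemma applied to the irreducible, finite-dimensional module $M$ forces $Z/\fm \hookrightarrow \End_R(M) = \bK$, so $\fm$ is maximal in $Z$ with residue field $\bK$. The specialisation $R_\fm/\fm R_\fm$ is then an Azumaya algebra of rank $d^2$ over $\bK$, hence a central simple $\bK$-algebra of dimension $d^2$, and algebraic closure forces $R_\fm/\fm R_\fm \cong M_d(\bK)$. Because $\fm \cdot M = 0$ and $Z \setminus \fm$ acts invertibly on $M$, the module $M$ factors through $R_\fm/\fm R_\fm$ as an irreducible module, which must therefore coincide with the unique simple $M_d(\bK)$-module of dimension $d$.

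The principal obstacle is the forward direction: Artin--Procesi delivers Azumaya-ness only over the a priori larger centre $Z(R_\fm)$, and one must separately secure the equality $Z(R_\fm) = Z_\fm$ in order to conclude Azumaya-ness over $Z_\fm$ in the sense defined in the paper. This is precisely what the unnamed lemma preceding Theorem~\ref{PIDeg} is designed to handle, and invoking it is the only nontrivial step beyond the standard Artin--Procesi machinery.
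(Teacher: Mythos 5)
Your overall plan (Artin--Procesi for the forward implication, reduction mod $\fm$ for the converse) is the same as the paper's, but both halves of your argument lean on the unestablished identity $Z(R_\fm)=Z_\fm$, which is exactly what the paper's proof is careful to avoid. First note that, by the paper's convention (``if $C=Z$, we will simply call $R$ an Azumaya algebra''), the statement is about $R_\fm$ being Azumaya over its \emph{own} centre $Z(R_\fm)$, so your invocation of the preceding lemma to ``upgrade'' to Azumaya over $Z_\fm$ is not required --- and in any case you never verify $Z(R_\fm)=Z_\fm$, which is needed for that lemma and is not automatic (the paper explicitly records that $Z_Q\subseteq Z(R_Q)$ with equality only under a regularity hypothesis). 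In the forward direction this missing identity also sinks your rank computation: you appeal to locality of $Z_\fm$ to conclude $R_\fm$ is free, but $R_\fm$ is a priori only projective over $Z(R_\fm)$, which need not be local. The paper avoids this entirely by citing the stronger form of Artin--Procesi (Rowen): once $R_\fm$ satisfies the identities of $M_d$ and $1\in g_d(R_\fm)R_\fm$, the theorem directly yields that $R_\fm$ is Azumaya over its centre \emph{of constant rank $d^2$}, with no local-ring detour.

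The more serious gap is in your converse. You claim $R_\fm/\fm R_\fm$ is an Azumaya algebra of rank $d^2$ over $\bK$, hence $\cong M_d(\bK)$, but this presupposes $R_\fm/\fm R_\fm\cong R_\fm\otimes_{Z(R_\fm)}\bK$, i.e.\ that $Z(R_\fm)=Z_\fm$ has residue field $\bK$ at $\fm$. If $Z(R_\fm)\supsetneq Z_\fm$, then $R_\fm/\fm R_\fm$ is only Azumaya over $Z(R_\fm)/\fm Z(R_\fm)$ --- a finite-dimensional commutative $\bK$-algebra that may split into several local factors --- so $R_\fm/\fm R_\fm$ could be a product of matrix algebras rather than a single $M_d(\bK)$, and the claimed uniqueness of the simple module fails. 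The paper circumvents this by arguing at the level of PI-degree: it shows $R_\fm/\fm R_\fm$ (hence $R/\fm R$) has PI-degree $d$, concludes $\fm\in\Spec_d(Z)$, and then invokes Theorem 1.9.21 of Rowen to obtain that $\fm R$ is maximal in $R$, so $A=\fm R$ and $R/A\cong M_d(\bK)$. If you want to keep your specialisation approach, you would need to justify the reduction to the residue field of $Z(R_\fm)$ rather than of $Z_\fm$, or replace it with the PI-degree argument.
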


Note that, since $Z$ is affine, $\fm$ is a maximal ideal of $Z$.
%
%
%
%

\begin{proof}
	($\implies$)	Suppose that $M$ is an irreducible (left) $R$-module of dimension $d$. Then $R/A\cong M_d(\bK)$ and so $\PIDeg(R/A)=d=\PIDeg(R)$.
	
	In particular, this means that $A\in\Spec_d(R)$ and so $g_d(R)\not\subseteq A$. Thus, $g_d(R)\cap (Z\setminus \fm)\neq \emptyset$, and hence $g_d(R)$ contains an invertible element of $Z_\fm$, so an invertible element of $R_\fm$. Thus $g_d(R_\fm)\neq \{0\}$. Furthermore, any homogeneous multilinear polynomial identity of $R$ is a polynomial identity of $R_\fm$, and so $\PIDeg(R_\fm)=\PIDeg(R)$.
	
	Also, $1\in g_d(R_\fm)R_\fm$ since $g_d(R_\fm)$ contains an element of $Z\setminus \fm$. So by a version of the Artin-Procesi theorem (see \cite{Row}), $R_\fm$ is Azumaya over its centre of constant rank $d^2$.
	
	($\impliedby$) Suppose that $R_\fm$ is Azumaya of constant rank $d^2$ over its centre. In particular, the Artin-Procesi theorem from \cite{Row} tells us that $R_\fm$ has PI-degree $d$ and that $1\in g_d(R_\fm)R_\fm$. 
	
	Note that it is always true that $R/\fm R\cong R_\fm/\fm R_\fm$. Furthermore $R_\fm/\fm R_\fm$ satisfies all multilinear identities of $R_\fm$, and if $g_d(R_\fm)\subseteq \fm R_\fm$ then $1\in g_d(R_\fm)R_\fm\subseteq \fm R_\fm$. But then $\fm R_\fm=R_\fm$ which is a contradiction. So $R_\fm/\fm R_\fm$ has PI-degree $d$, and so $R/\fm R$ has PI-degree $d$. This precisely says that $\fm R\in\Spec_d(R)$, and so $\fm\in\Spec_d(Z)$.
	
	Since $\fm$ is a maximal ideal of $Z$, Theorem 1.9.21 of \cite{Row2} says that $\fm R$ is a maximal ideal of $R$, and so $A=\fm R$. In particular, $R/\fm R\cong M_d(\bK)$ as in the proof of Theorem~\ref{PIDeg}. Since $M$ is an irreducible $R/\fm R$-module, the result follows.
	%
	%
	%
	%
	%
	%
	%
	%
	%
	%
	%

\end{proof}
%

Observe that, by Schur's lemma, if $M$ is an irreducible $R$-module then each $u\in Z$ acts on $M$ by scalar multiplication. In particular, there exists a central character $\zeta_M:Z\to\bK$ where $\zeta_M(u)$ is defined by $u\cdot m=\zeta_M(u)m$ for all $m\in M$. Thus, 
$$\mathcal{PA}_R=\{\ker(\zeta_M)\,\vert\,M\,\mbox{an irreducible}\, R\mbox{-module of maximal dimension}\}.$$
%
%

\subsection{Pseudo-Azumaya loci for higher universal enveloping algebras}
\label{s3.2}

From now on, we once again suppose $\bK$ has characteristic $p>0$.
%
%

We now shall explore the pseudo-Azumaya locus for the higher universal enveloping algebras. Suppose that $G$ is a connected reductive algebraic group over $\bK$. We then take $Z_p^{[r]}$ to be the (central) subalgebra of $U^{[r]}(G)$ generated by the elements $\delta^{\otimes p}-\delta^{p}$ for $\delta\in\Di_{p^{r}}^{+}(G)$. The work of \cite{West} shows that $$Z^{[r]}_p=\bK[(\ve_{\alpha}^{(p^r)})^{\otimes p},\,\binom{\vh_t}{p^r}^{\otimes p}-\binom{\vh_t}{p^r}\,\vert\,\alpha\in\Phi,\,1\leq t\leq d].$$

Furthermore, from \cite{West} it is known that $U^{[r]}(G)$ is an affine $\bK$-algebra and that it is a free $Z_p^{[r]}$-module of finite rank $p^{(r+1)\dim(\fg)}$. Since $Z^{[r]}_p$ is Noetherian and finitely-generated, the Artin-Tate Lemma gives that the centre of $U^{[r]}(G)$, which we shall denote by $Z^{[r]}(G)$, is an affine $Z_p^{[r]}$-algebra and an affine $\bK$-algebra. This implies that $Z^{[r]}_p$, $Z^{[r]}(G)$ and $U^{[r]}(G)$ are Noetherian PI rings and that $U^{[r]}(G)$ is a Jacobson ring.  


For the remainder of this section we shall use the convention that for an irreducible $U(\fg)$-module $N$ the corresponding central character is $\zeta_N:Z(\fg)\coloneqq Z(U(\fg))\to\bK$ while for an irreducible $U^{[r]}(G)$-module $M$ the corresponding central character is $\zeta^{[r]}_M:Z^{[r]}(G)\to\bK$. In order to understand how these maps interact, we need to consider some homomorphisms between the centres.

Recall from \cite{West} that there exists a surjective algebra homomorphism $\Upsilon:U^{[r]}(G)\to U(\fg)$. This map clearly maps centres to centres, so gives an algebra homomorphism $\Upsilon\coloneqq\Upsilon_{r,r}:Z^{[r]}(G)\to Z(\fg)$. In particular, \cite{West} shows that, $\Upsilon((\ve_\alpha^{(p^r)})^{\otimes p})=\ve_\alpha^p$ for $\alpha\in\Phi$ and $\Upsilon(\binom{\vh_t}{p^r}^{\otimes p}-\binom{\vh_t}{p^r})=\vh_t^p-\vh_t$ for $1\leq t\leq d$. Hence, $\Upsilon$ further restricts to an algebra homomorphism 
$$\Upsilon:Z^{[r]}_p\to Z_p$$ 
which is now clearly an isomorphism.

There is another map between centres which is worth considering. Let $P$ be an irreducible $\Di(G_{r})$-module, and let us consider the induced module $U^{[r]}(G)\otimes_D P$, where, as always, $D$ denotes $\Di(G_{r})$. The action of $U^{[r]}(G)$ on $U^{[r]}(G)\otimes_D P$ is by left multiplication, so in particular $u\in Z^{[r]}(G)$ acts on $U^{[r]}(G)\otimes_D P$ by the $U^{[r]}(G)$-module endomorphism $$\rho(u):U^{[r]}(G)\otimes_D P\to U^{[r]}(G)\otimes_D P,$$ which is left multiplication by $u$. Clearly $\rho(u)$ is a central element of $E\coloneqq\End_{U^{[r]}(G)}(U^{[r]}(G)\otimes_D P)^{op}$. Recall from Proposition~\ref{End} that $U(\fg)$ is isomorphic to $E$, and let $\tau:E\to U(\fg)$ be the isomorphism. Hence, there is a homomorphism of algebras $$\Omega_P:Z^{[r]}(G)\to Z(\fg)$$ given by composition of $\tau$ and $\rho$.

We can furthermore observe that the proof of Proposition~\ref{Decomp} shows that $$\Omega_P((\ve_\alpha^{(p^r)})^{\otimes p})=\ve_\alpha^p$$ for $\alpha\in\Phi$ and $$\Omega_P(\binom{\vh_t}{p^r}^{\otimes p}-\binom{\vh_t}{p^r})=\vh_t^p-\vh_t$$ for $1\leq t\leq d$. In particular, $\Upsilon\vert_{Z^{[r]}_p}=\Omega_P\vert_{Z^{[r]}_p}$, and so $\Omega_P$ restricts to an isomorphism $Z_p^{[r]}\to Z_p$.

The following conditions for the map $\Omega_P$ to be surjective or injective are easy to prove,

\begin{lemma}
	The homomorphism $\Omega_P$ is surjective if and only if every central element of $E$ is left multiplication by some central element of $U^{[r]}(G)$.
\end{lemma}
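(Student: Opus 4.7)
The statement is essentially a tautology once one unwinds the definitions, so the plan is to proceed directly via the factorisation $\Omega_P = \tau \circ \rho$ and the fact that an algebra isomorphism restricts to an isomorphism of centres. Nothing deep is needed; the main task is to verify the targets of the maps are what one expects.

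First I would make precise the map $\rho : Z^{[r]}(G) \to E$. For $u \in Z^{[r]}(G)$, the left multiplication map $\rho(u) : U^{[r]}(G) \otimes_D P \to U^{[r]}(G) \otimes_D P$ commutes with the left $U^{[r]}(G)$-action (by centrality of $u$), so $\rho(u) \in E$. Moreover, for any $f \in E$ and any $v \in U^{[r]}(G) \otimes_D P$, one has $(f \circ \rho(u))(v) = f(uv) = u f(v) = (\rho(u) \circ f)(v)$ since $f$ is $U^{[r]}(G)$-linear. Hence $\rho(u) \in Z(E)$, and thus $\rho$ factors as a homomorphism $Z^{[r]}(G) \to Z(E)$.

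Next I would note that the isomorphism $\tau : E \to U(\fg)$ restricts to an algebra isomorphism $Z(E) \xrightarrow{\sim} Z(U(\fg)) = Z(\fg)$, since any algebra isomorphism carries centres to centres. By the definition given just before the lemma, $\Omega_P = \tau \circ \rho$, so the image of $\Omega_P$ inside $Z(\fg)$ equals $\tau(\rho(Z^{[r]}(G)))$. Because $\tau$ restricts to a bijection $Z(E) \to Z(\fg)$, it follows that $\Omega_P$ is surjective if and only if $\rho : Z^{[r]}(G) \to Z(E)$ is surjective.

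Finally I would observe that the surjectivity of $\rho : Z^{[r]}(G) \to Z(E)$ is simply a rephrasing of the stated condition: it says that for every $f \in Z(E)$ there exists $u \in Z^{[r]}(G)$ with $f = \rho(u)$, i.e.\ every central element of $E$ is left multiplication by some central element of $U^{[r]}(G)$. There is no substantive obstacle here; the only thing to be careful about is confirming that $\rho(u) \in Z(E)$ (not merely in $E$) for $u \in Z^{[r]}(G)$, which is the short computation carried out above.
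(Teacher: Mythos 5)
The paper does not give a proof of this lemma (it is marked as ``easy to prove'' and stated without proof), so there is no written argument to compare against. Your proof is correct and is essentially the expected one: $\Omega_P=\tau\circ\rho$ by definition, $\rho$ lands in $Z(E)$ because $U^{[r]}(G)$-linearity of any $f\in E$ gives $f(uv)=uf(v)$ for central $u$, and the isomorphism $\tau:E\to U(\fg)$ restricts to a bijection on centres, so surjectivity of $\Omega_P$ is exactly surjectivity of $\rho$ onto $Z(E)$, which is the stated condition. The one point you flag as requiring care — that $\rho(u)$ lies in $Z(E)$ rather than merely in $E$ — is indeed the only thing to check, and your short commutation computation handles it correctly (and the passage to the opposite algebra $E=\End_{U^{[r]}(G)}(\cdot)^{op}$ is harmless since centrality is preserved under taking opposites).
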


\begin{lemma}
	The homomorphism $\Omega_P$ is injective if and only if, for $u\in Z^{[r]}(G)$, $u\otimes_D z=0\in U^{[r]}(G)\otimes_D P$ for all $z\in P$ implies that $u=0$. Equivalently, if and only if $U^{[r]}(G)\otimes_D P$ is a faithful $Z^{[r]}(G)$-module.
\end{lemma}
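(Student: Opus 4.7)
The plan is to unpack the definition of $\Omega_P$ as $\tau \circ \rho$ and observe that since $\tau$ is an isomorphism, injectivity of $\Omega_P$ reduces to injectivity of $\rho$. So the first task is to translate ``$\rho(u)=0$'' into the claimed pointwise condition, and then to observe that this is precisely the failure-of-faithfulness condition.

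First, I would fix $u \in Z^{[r]}(G)$ and analyse when $\rho(u) = 0 \in E$. By construction, $\rho(u)$ is the $U^{[r]}(G)$-module endomorphism of $U^{[r]}(G) \otimes_D P$ given by left multiplication by $u$, which sends $a \otimes_D z$ to $ua \otimes_D z$. Since $u$ is central in $U^{[r]}(G)$, we have $ua \otimes_D z = au \otimes_D z = a \cdot (u \otimes_D z)$, where the last equality uses the defining $U^{[r]}(G)$-action on $U^{[r]}(G) \otimes_D P$. Consequently $\rho(u) = 0$ if and only if $a \cdot (u \otimes_D z) = 0$ for all $a \in U^{[r]}(G)$ and all $z \in P$; but taking $a = 1$ already kills everything, and conversely if $u \otimes_D z = 0$ for all $z$ then $a \cdot (u \otimes_D z) = 0$ trivially. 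Thus $\rho(u) = 0$ if and only if $u \otimes_D z = 0$ for all $z \in P$.

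Since $\tau: E \xrightarrow{\sim} U(\fg)$ is an isomorphism (by Lemma~\ref{End}), the composite $\Omega_P = \tau \circ \rho$ is injective if and only if $\rho$ is injective, and by the previous paragraph this holds if and only if the implication ``$u \otimes_D z = 0$ for all $z \in P$ $\Rightarrow$ $u = 0$'' holds for $u \in Z^{[r]}(G)$, giving the first equivalence.

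For the second equivalence, it suffices to observe that the condition just stated is precisely the assertion that $\ann_{Z^{[r]}(G)}(U^{[r]}(G) \otimes_D P) = 0$: indeed, by the same centrality calculation as above, $u \in Z^{[r]}(G)$ annihilates the entire module $U^{[r]}(G) \otimes_D P$ if and only if $u$ annihilates the generating set $\{1 \otimes_D z : z \in P\}$, i.e. if and only if $u \otimes_D z = 0$ for every $z \in P$. There is no substantive obstacle here; the whole lemma is a book-keeping exercise in transporting injectivity across the isomorphism $\tau$ and in exploiting centrality of $u$ to pass between ``$u$ kills everything'' and ``$u$ kills the canonical generators.''
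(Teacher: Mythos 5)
Your proof is correct, and it is the natural argument; the paper omits the proof (it simply states that the injectivity criterion is ``easy to prove''), so there is no distinct approach to compare against. The key observations — that $\tau$ is an isomorphism so injectivity of $\Omega_P=\tau\circ\rho$ reduces to injectivity of $\rho$, and that centrality of $u$ together with the $U^{[r]}(G)$-module structure on $U^{[r]}(G)\otimes_D P$ reduces ``$\rho(u)=0$'' to ``$u\otimes_D z=0$ for all $z\in P$'' — are exactly the book-keeping steps the author had in mind, and your handling of the faithfulness reformulation via the generating set $\{1\otimes_D z : z\in P\}$ is likewise sound.
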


Let us see how the homomorphisms $\Omega_P$ interact with the central characters of irreducible $U^{[r]}(G)$-modules.

\begin{prop}
	Let $M$ be an irreducible $U^{[r]}(G)$-module with $\Psi(M)=(P,N)$ for $P\in \Irr(\Di(G_{r}))$ and $N\in\Irr(U(\fg))$. Then the following diagram commutes:
	$$
	\xymatrix{
		Z^{[r]}(G) \ar@{->}[rr]^{\zeta^{[r]}_M} \ar@{->}[d]^{\Omega_P} & & \bK \\
		Z(\fg) \ar@{->}[urr]^{\zeta_N} & & \\
	}
	$$
\end{prop}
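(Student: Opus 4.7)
The plan is to track how $u \in Z^{[r]}(G)$ acts on the explicit realization $M \cong (U^{[r]}(G) \otimes_D P) \otimes_{U(\fg)} N$ guaranteed by Theorem~\ref{equiv} (with $D = \Di(G_r)$), and identify the resulting scalar in two ways.

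First I would note that, since $u$ is central in $U^{[r]}(G)$, left multiplication by $u$ on $U^{[r]}(G) \otimes_D P$ not only defines an element $\rho(u) \in E = \End_{U^{[r]}(G)}(U^{[r]}(G) \otimes_D P)^{op}$ but in fact lies in the centre of $E$; correspondingly, $\Omega_P(u) = \tau(\rho(u)) \in Z(\fg)$, which is what makes the vertical map of the diagram well-defined in the first place.

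Next I would perform the key computation. For $a \in U^{[r]}(G)$, $z \in P$ and $n \in N$, the left action of $u$ on $M$ sends
\[
(a \otimes_D z) \otimes_{U(\fg)} n \;\longmapsto\; \rho(u)(a \otimes_D z) \otimes_{U(\fg)} n.
\]
Since $E$ acts on $U^{[r]}(G) \otimes_D P$ on the right via evaluation, this equals $\bigl((a \otimes_D z) \cdot \rho(u)\bigr) \otimes_{U(\fg)} n$, and using the $E \cong U(\fg)$ identification and the balance condition across the tensor product this becomes $(a \otimes_D z) \otimes_{U(\fg)} \bigl(\Omega_P(u) \cdot n\bigr)$. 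Because $N$ is irreducible and $\Omega_P(u) \in Z(\fg)$, Schur's lemma gives $\Omega_P(u) \cdot n = \zeta_N(\Omega_P(u)) \, n$, and so $u$ acts on every element of $M$ by the scalar $\zeta_N(\Omega_P(u))$. By the definition of $\zeta_M^{[r]}$ this equals $\zeta_M^{[r]}(u)$, proving commutativity of the diagram.

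The only possible obstacle is a bookkeeping one: keeping straight the left/right conventions and the opposite-algebra convention in $E$, together with the compatibility of $\tau: E \to U(\fg)$ with the right $E$-action versus the ``right $U(\fg)$-action'' used in forming the tensor product over $U(\fg)$. Once the balance relation $(a \otimes_D z) \cdot \rho(u) \otimes_{U(\fg)} n = (a \otimes_D z) \otimes_{U(\fg)} \Omega_P(u) \cdot n$ is justified (which is essentially the definition of how the $E$-module structures on both sides were introduced in Theorem~\ref{equiv}), the argument is a one-line Schur-lemma computation.
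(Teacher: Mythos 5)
Your proof is correct and is essentially the paper's own argument: both realize $M$ as $(U^{[r]}(G)\otimes_D P)\otimes_{U(\fg)} N$, observe that $u$ acts as $\rho(u)\in Z(E)$ on the left factor, move $\tau(\rho(u))=\Omega_P(u)$ across the tensor product using the right $E$-module structure, and finish with Schur's lemma on $N$. No gaps.
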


\begin{proof}
	Recall here that $M\cong(U^{[r]}(G)\otimes_D P)\otimes_{U(\fg)} N$. Now, let $u\in Z^{[r]}(G)$, $v\in U^{[r]}(G)$, $z\in P$ and $n\in N$. Then \begin{multline*} 
		u\cdot (v\otimes_D z)\otimes_{U(\fg)} n=\rho(u)(v\otimes_D z)\otimes_{U(\fg)} n = (v\otimes_D z)\cdot\tau(\rho(u))\otimes_{U(\fg)} n \\ = (v\otimes_D z)\otimes_{U(\fg)} \Omega_P(u)\cdot n=\zeta_N(\Omega_P(u))(v\otimes_D z)\otimes_{U(\fg)} n
	\end{multline*}
\end{proof}

\begin{cor}
	Let $M$ be an irreducible $U^{[r]}(G)$-module with $\Psi(M)=(P,N)$ for $P\in \Irr(\Di(G_{r}))$ and $N\in\Irr(U(\fg))$. Then 
	$$\ker{\zeta^{[r]}_M}=\Omega_P^{-1}(\ker{\zeta_N}).$$
\end{cor}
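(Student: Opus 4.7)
The plan is to derive this corollary as an immediate consequence of the commutative diagram in the preceding proposition, which asserts the factorisation $\zeta^{[r]}_M = \zeta_N \circ \Omega_P$ as algebra homomorphisms from $Z^{[r]}(G)$ to $\bK$. Once this identity of maps is available, the statement is purely a set-theoretic fact about preimages, so no further representation-theoretic input is needed.

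More concretely, I would just note that for any $u \in Z^{[r]}(G)$, we have $u \in \ker \zeta^{[r]}_M$ if and only if $\zeta_N(\Omega_P(u)) = 0$, which by definition of preimage is equivalent to $\Omega_P(u) \in \ker \zeta_N$, i.e.\ $u \in \Omega_P^{-1}(\ker \zeta_N)$. This gives the equality of subsets (in fact, of ideals) $\ker \zeta^{[r]}_M = \Omega_P^{-1}(\ker \zeta_N)$.

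There is no substantive obstacle here: the real content is in the preceding proposition, where one has to trace through the identification $M \cong (U^{[r]}(G) \otimes_D P) \otimes_{U(\fg)} N$ and check that the action of $u \in Z^{[r]}(G)$ on $M$ is given by scalar multiplication by $\zeta_N(\Omega_P(u))$. Once the two central characters are known to factor in this way, the corollary is a one-line deduction. I would write the proof as a single sentence invoking the preceding proposition and taking preimages of $\{0\}$ on both sides.
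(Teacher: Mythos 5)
Your proof is correct and takes the same approach the paper intends: the corollary is stated without proof precisely because it is the immediate set-theoretic consequence of the factorisation $\zeta^{[r]}_M = \zeta_N \circ \Omega_P$ established in the preceding proposition.
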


Recall now from Corollary~\ref{maxdim} that if $M$ is an irreducible $U^{[r]}(G)$-module corresponding to the pair $(P,N)\in\Irr(\Di(G_{r}))\times\Irr(U(\fg))$ then $\dim(M)=\dim(P)\dim(N)$. Hence, an irreducible $U^{[r]}(G)$-module $M$ is of maximal dimension if and only if the corresponding modules $P$ and $N$ are of maximal dimension.

From now on fix $P$ as the $r$-th Steinberg module $St_r$ of $G$, hence an irreducible $\Di(G_{r})$-module of maximal dimension. As in Subsection~\ref{s2.1}, let $\Gamma_P$ be the category of irreducible $U^{[r]}(G)$-modules which contain $P$ as an irreducible $\Di(G_{r})$-submodule. Let $\Maxi\Gamma_P$ denote the full subcategory of $\Gamma_P$ whose objects are the irreducible $U^{[r]}(G)$-modules of maximal dimension in $\Gamma_P$, and let $\MaxIrr(U(\fg))$ similarly denote the full subcategory of $\Irr(U(\fg))$ consisting of irreducible $U(\fg)$-modules of maximal dimension. The inverse equivalences of categories $\Psi_P:\Gamma_P\to\Irr(U(\fg))$ and $\Phi_P:\Irr(U(\fg))\to\Gamma_P$ then restrict to inverse equivalences of categories $$\Psi_P:\Maxi\Gamma_P\to \MaxIrr(U(\fg))\qquad\mbox{and}\qquad \Phi_P:\MaxIrr(U(\fg))\to\Maxi\Gamma_P.$$

We have already seen that, for $M\in\Maxi\Gamma_P$, $\ker(\zeta^{[r]}_M)=\Omega_P^{-1}(\ker(\zeta_{\Psi_P(M)})$. We hence have that
\begin{multline*}
	\mathcal{PA}_{U^{[r]}(G)}=\{\ker(\zeta^{[r]}_M)\,\vert\,M\in\MaxIrr(U^{[r]}(G))\}=\{\ker(\zeta^{[r]}_M)\,\vert\,M\in\Maxi\Gamma_P\}\\=\{\Omega_P^{-1}(\ker(\zeta_{\Psi_P(M)}))\,\vert\,M\in\Maxi\Gamma_P\}=\{\Omega_P^{-1}(\ker(\zeta_{N}))\,\vert\,N\in\MaxIrr(U(\fg))\}.
\end{multline*}

\begin{prop}
	Let $P$ be the $r$-th Steinberg module $St_r$ of $G$. There is a surjective morphism $\Omega_P^{*}:\mathcal{PA}_{U(\fg)}\to\mathcal{PA}_{U^{[r]}(G)}$ which sends $\ker(\zeta_N)$ to $\Omega_P^{-1}(\ker(\zeta_N))$.
\end{prop}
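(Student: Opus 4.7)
The plan is to derive the proposition almost directly from the calculation displayed immediately before the statement, namely
$$\mathcal{PA}_{U^{[r]}(G)} = \{\Omega_P^{-1}(\ker(\zeta_N)) \,\vert\, N \in \MaxIrr(U(\fg))\},$$
combined with the observation that the ring homomorphism $\Omega_P \colon Z^{[r]}(G) \to Z(\fg)$ induces a pullback on spectra which supplies the underlying morphism of varieties.

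For well-definedness, I would first note that the value $\Omega_P^{-1}(\ker(\zeta_N))$ depends only on the ideal $\ker(\zeta_N)$ and not on the particular module $N$ representing it, so the assignment makes sense. To verify that its image lies in $\mathcal{PA}_{U^{[r]}(G)}$, take $\ker(\zeta_N) \in \mathcal{PA}_{U(\fg)}$ with $N$ of maximal dimension, and set $M \coloneqq \Phi_P(N)$. Since both $P = St_r$ and $N$ attain the maximal dimension, Corollary~\ref{maxdim} shows that $M \in \Maxi\Gamma_P \subseteq \MaxIrr(U^{[r]}(G))$; then the central-character identity $\ker(\zeta^{[r]}_M) = \Omega_P^{-1}(\ker(\zeta_{\Psi_P(M)})) = \Omega_P^{-1}(\ker(\zeta_N))$ from the corollary preceding the proposition places $\Omega_P^{-1}(\ker(\zeta_N))$ in $\mathcal{PA}_{U^{[r]}(G)}$.

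For surjectivity, I would take an arbitrary $\fm \in \mathcal{PA}_{U^{[r]}(G)}$ and write $\fm = \ker(\zeta^{[r]}_M)$ for some max-dim irreducible $U^{[r]}(G)$-module $M$. The bijection $\Psi$ associated to Corollary~\ref{chibij} decomposes $M$ as a pair $(Q,N) \in \Irr(\Di(G_{r})) \times \Irr(U(\fg))$ with $\dim(M) = \dim(Q)\dim(N)$. Since the maximum dimension $p^{(r+1)\dim(\fn^{-})}$ factors uniquely as $p^{r\dim(\fn^{-})} \cdot p^{\dim(\fn^{-})}$ over these two categories, and $St_r$ is the unique max-dim irreducible $\Di(G_{r})$-module, it follows that $Q = St_r = P$ and $N \in \MaxIrr(U(\fg))$. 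Applying the central-character identity once more gives $\fm = \Omega_P^{-1}(\ker(\zeta_N)) = \Omega_P^*(\ker(\zeta_N))$, so $\fm$ is in the image.

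The one substantive input, and thus the main potential obstacle, is the uniqueness of the Steinberg module $St_r$ among max-dim irreducible $\Di(G_{r})$-modules; this is already tacitly used in the displayed identity above and follows from Steinberg's tensor product theorem under our standing assumption that $X(T)/p^rX(T)$ admits representatives in $X_r(T)$. Beyond this point the argument is pure bookkeeping.
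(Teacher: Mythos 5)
Your proof is correct and follows essentially the same route as the paper: both rely on the displayed identity $\mathcal{PA}_{U^{[r]}(G)}=\{\Omega_P^{-1}(\ker\zeta_N)\,\vert\,N\in\MaxIrr(U(\fg))\}$ together with the observation that $\Omega_P^{*}$ is the pullback of the commutative-algebra homomorphism $\Omega_P$ restricted to the pseudo-Azumaya loci. You merely unpack the surjectivity step (via $\Psi$ and the uniqueness of $St_r$ among maximal-dimension irreducibles) that the paper dispatches with ``by the above discussion.''
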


\begin{proof}
	$\Omega_P:Z^{[r]}(G)\to Z(\fg)$ is a homomorphism of commutative algebras, so it induces a morphism
	$$\Omega_P^{*}:\Spec(Z(\fg))\to \Spec(Z^{[r]}(G)).$$
	This morphism sends $I\in\Spec(Z(\fg))$ to $\Omega_P^{-1}(I)\in\Spec(Z^{[r]}(G))$, so by above restricts to a map $\Omega_P^{*}:\mathcal{PA}_{U(\fg)}\to\mathcal{PA}_{U^{[r]}(G)}$. It is surjective by the above discussion.
\end{proof}

\begin{cor}
	Let $P$ be the $r$-th Steinberg module $St_r$ of $G$. If $\Omega_P$ is surjective, then $\Omega_P^{*}$ is a bijection.
\end{cor}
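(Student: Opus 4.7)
The plan is to use the general fact that a surjective ring homomorphism induces an injective map on spectra (in fact, a closed embedding identifying $\Spec$ of the quotient with the closed subset of primes containing the kernel). Since surjectivity of $\Omega_P^*$ has already been established, only injectivity on the pseudo-Azumaya locus needs to be proved.

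Concretely, suppose $\Omega_P : Z^{[r]}(G) \to Z(\fg)$ is surjective, with kernel $K$, so that $\Omega_P$ induces an isomorphism $Z^{[r]}(G)/K \xrightarrow{\sim} Z(\fg)$. Given any ideal $I$ of $Z(\fg)$, standard commutative algebra yields $\Omega_P(\Omega_P^{-1}(I)) = I \cap \Omega_P(Z^{[r]}(G)) = I$, using surjectivity in the last equality. Applied to $I = \ker(\zeta_N)$ this says that $\Omega_P$ itself recovers $\ker(\zeta_N)$ from $\Omega_P^{*}(\ker(\zeta_N)) = \Omega_P^{-1}(\ker(\zeta_N))$.

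Now to check injectivity of $\Omega_P^{*}$, take $N_1, N_2 \in \MaxIrr(U(\fg))$ with $\Omega_P^{*}(\ker(\zeta_{N_1})) = \Omega_P^{*}(\ker(\zeta_{N_2}))$, i.e. $\Omega_P^{-1}(\ker(\zeta_{N_1})) = \Omega_P^{-1}(\ker(\zeta_{N_2}))$. Applying $\Omega_P$ to both sides and using the identity above, we obtain $\ker(\zeta_{N_1}) = \ker(\zeta_{N_2})$. Hence $\Omega_P^{*}$ is injective on $\mathcal{PA}_{U(\fg)}$, and combined with the surjectivity from the preceding proposition, it is a bijection onto $\mathcal{PA}_{U^{[r]}(G)}$.

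There really isn't a hard step here; the argument is a purely formal consequence of surjectivity of $\Omega_P$, together with the description of $\Omega_P^{*}$ provided in the statement. The only thing worth flagging is that one should note that $\ker(\zeta_N)$ is a (maximal) ideal of $Z(\fg)$, so the general preimage-image identity applies without issue.
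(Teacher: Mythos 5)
Your argument is correct and is exactly the standard one the paper is implicitly relying on (the corollary is stated without proof): surjectivity of $\Omega_P$ gives $\Omega_P(\Omega_P^{-1}(I)) = I$ for any ideal $I$ of $Z(\fg)$, so $\Omega_P^{*}$ is injective, and combined with the surjectivity from the preceding proposition it is a bijection.
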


If we instead take $P$ to be an arbitrary irreducible $\Di(G_r)$-module then $\Psi_P$ and $\Phi_P$ still restrict to inverse equivalences of categories between $\Maxi\Gamma_P$ and $\MaxIrr(U(\fg))$, and we still get the equality $$\{\ker(\zeta^{[r]}_M)\,\vert\,M\in\Maxi\Gamma_P\}=\{\Omega_P^{-1}(\ker(\zeta_{N}))\,\vert\,N\in\MaxIrr(U(\fg))\},$$ but the left hand side may no longer be equal to $\mathcal{PA}_{U^{[r]}(G)}$. For example, if $P$ is the trivial 1-dimensional $\Di(G_r)$-module then $\Phi_P$ lifts an irreducible $U(\fg)$-module $N$ to the irreducible $U^{[r]}(G)$-module $N$ along the natural quotient $U^{[r]}(G)\mapsto U^{[r]}(G)/U^{[r]}(G)\Di^{+}(G_r)=U(\fg)$. Hence, if $N$ is an irreducible $U(\fg)$-module of maximum dimension, then $\ker(\zeta_N)$ is in the pseudo-Azumaya locus of $U(\fg)$ (and hence the Azumaya locus, since $U(\fg)$ is prime), but $\Omega_P^{*}(\ker(\zeta_N))=\ker(\zeta_N^{[r]})$. In particular, $\Omega_P^{*}(\ker(\zeta_N))$ will contain $Z\cap U^{[r]}(G)\Di^{+}(G_r)$, suggesting that it is not the central annihilator of an irreducible $U^{[r]}(G)$-module of maximum dimension.

\end{document}